\numberwithin{equation}{section}
\newcommand{\be}{\begin{equation}}
\newcommand{\ee}{\end{equation}}
\newcommand{\benn}{\begin{equation*}}
\newcommand{\eenn}{\end{equation*}}
\newcommand{\bea}{\begin{eqnarray}}
\newcommand{\eea}{\end{eqnarray}}
\newcommand{\beann}{\begin{eqnarray*}}
\newcommand{\eeann}{\end{eqnarray*}}
\newtheorem{theorem}{Theorem}[section]
\newtheorem{proposition}[theorem]{Proposition}
\newtheorem{lemma}[theorem]{Lemma}
\newtheorem{definition}[theorem]{Definition}
\newtheorem{remark}[theorem]{Remark}
\newtheorem{example}[theorem]{Example}
\newcommand{\qed}{\hfill $\Box$\smallskip}
\def\R{\mathbb{R}}
\def\cA{\mathcal{A}}
\def\cB{\mathcal{B}}
\def\cC{\mathcal{C}}
\def\cK{\mathcal{K}}
\def\cL{\mathcal{L}}
\def\cR{\mathcal{R}}
\def\cU{\mathcal{U}}
\def\txtd{{\textnormal{d}}}
\def\txte{{\textnormal{e}}}
\def\Id{{\textnormal{Id}}}
\def\ra{\rightarrow}
\title{Pathwise mild solutions for quasilinear stochastic partial differential equations}
\author{Christian Kuehn\thanks{Technical University of Munich (TUM), 
Faculty of Mathematics, 85748 Garching bei M\"unchen, Germany}~~and~Alexandra 
Neam\c tu\thanks{Technical University of Munich (TUM), 
Faculty of Mathematics, 85748 Garching bei M\"unchen, Germany}}
\begin{document}

\maketitle

\begin{abstract}
Stochastic partial differential equations (SPDEs) have become a key modelling 
tool in applications. Yet, there are many classes of SPDEs, where the existence 
and regularity theory for solutions is not completely developed. Here we contribute 
to this aspect and prove the existence of mild solutions for a broad class of quasilinear 
Cauchy problems, including - among others - cross-diffusion systems as a key application. 
Our solutions are local-in-time and are derived via a fixed point argument in 
suitable function spaces. The key idea is to combine the theory of 
deterministic quasilinear parabolic partial differential equations (PDEs) with 
recent theory of evolution semigroups. We also show, how to apply our theory to 
the Shigesada-Kawasaki-Teramoto (SKT) model. Furthermore, we provide examples of 
blow-up and ill-posed operators, which can occur after finite-time. 
\end{abstract}
\textbf{Keywords:} quasilinear stochastic partial differential equations, maximal local pathwise mild solution, stochastic Shigesada-Kawasaki-Teramoto model.
\section{Introduction}
\label{sec:intro}

In this work, we study SPDEs as abstract quasilinear Cauchy problems 
\begin{equation}
\label{introduction:equation}
\begin{cases}
~\txtd u(t)= [A(u(t))u(t) + F(t,u(t))]~\txtd t  + \sigma(t,u(t))~\txtd W(t)
,\hspace*{3 mm} t\in(0,\infty),~u(t)\in\R^d,\\
u(0)=u_{0},
\end{cases}
\end{equation}
where the precise assumptions on the coefficients are stated in 
Section~\ref{qspde}. One particular motivation is the SKT cross-diffusion
model~\cite{ShigesadaKawasakiTeramoto} with $d=2$, 
$A(u)=(\Delta (p_1(u)),\Delta (p_2(u)))^\top$ for quadratic polynomials $p_{1,2}$
in $u=(u_1,u_2)^\top$, $\Delta$ denoting the Laplacian, and $F$ also being a quadratic 
polynomial. The deterministic SKT system (i.e., $\sigma\equiv 0$) and its variants have 
been studied very intensively~\cite{Amann1,ChenJuengel2,LouNi,LouNiWu,Ni}.
Furthermore, there are many other motivations as the form of the deterministic
part (or drift terms) $A$ and $F$ encompasses a much wider class of 
PDEs~\cite{Amann4,Yagi1}. For many applications, it is very important to
consider noise terms ($\sigma\not\equiv 0$) due to intrinsic finite system-size
noise or external fluctuations acting on the system.\medskip

Here we aim to develop a theory for quasilinear stochastic evolution 
equations~\eqref{introduction:equation} using a semigroup approach. The main
theme is to extend the very general deterministic theory of quasilinear
Cauchy problems~\cite{Amann2,Yagi,Yagi1}. The key idea is to employ a modified 
definition of mild solutions~\cite{PronkVeraar} for the quasilinear case in 
comparison to the more classical parabolic SPDE setting~\cite{DaPratoZabczyk}. 
Before we describe our approach in more detail, we briefly review 
some other techniques and solution concepts used for (certain subclasses of) the 
SPDE~\eqref{introduction:equation}. Instead of mild solutions, one may instead
use weak, or martingale, solutions~\cite{BertiniButtaPisante,DebusscheHofmanovaVovelle,
DenisStoica,HofmanovaZhang} of~\eqref{introduction:equation}; here weak solution 
is interpreted in the classical PDE sense while these solutions are also sometimes 
referred to as strong solutions from a probabilistic 
perspective~\cite{PrevotRoeckner}. There are also several
works exploiting the additional assumption of monotone 
coefficients~\cite{LiuRoecknerdaSilva} particularly in the case of the stochastic 
porous medium equation~\cite{BarbuDaPratoRoeckner,BarbuDaPratoRoeckner1}, where 
$A(u)=\Delta(a(u))$ for a maximal monotone map $a$ and $F\equiv 0$. Other
approaches to quasilinear SPDEs are based upon a gradient structure~\cite{Gess}, 
approximation methods~\cite{Kim1,KobayasiNoboriguchi}, kinetic 
solutions~\cite{DebusscheHofmanovaVovelle,GessHofmanova}, or directly 
looking at strong (in the PDE sense) solutions~\cite{Hornung1}.\medskip 

One may ask, why one might want to prove the existence of pathwise mild solutions 
obtained by a suitable variations-of-constants/Duhamel formula~\cite{Henry} instead 
of working with weak solutions obtained in a formulation via test 
functions~\cite{Evans}? One reason is that a mild formulation is often more natural 
to work with in the context of (random) dynamical systems for 
SPDEs~\cite{CrauelFlandoli,Henry}. In fact, many classical results regarding dynamics and long-time behavior of semilinear SPDEs 
are often crucially based upon the mild formulation and semigroups~\cite{Henry}.  
We expect this theory to generalize a lot easier also in the quasilinear case if one does not have to work with
weak(er) solutions. If we take
the SKT system again as a motivation, then there are deterministic results regarding
the existence of attractors using weak~\cite{PhamTemam} as well as 
mild~\cite{Yagi2} solutions concepts. We intend to investigate the existence of random attractors for the stochastic SKT equation using the mild formulation in a future work, since it perfectly fits into the framework of random dynamical systems. A second reason to consider mild solutions is
that it should be easier to derive space-time regularity~\cite{DebusscheDeMoorHofmanova} 
of the solution for~\eqref{introduction:equation}. Estimates for the nonlinear terms 
also tend to simplify in a mild solution setting already for SODEs~\cite{BerglundGentz}. 
A third reason to consider mild solutions is that they are more natural in the setting
of regularity structures~\cite{Hairer1,Hairer3}, \cite{GerecserHairer}, where convolution with the heat kernel is
a key tool; in this context generalizations of regularity structures to 
quasilinear SPDEs turn out to be very subtle~\cite{BerglundKuehn}, \cite{GerecserHairer}. So a 
better understanding for more regular noises should be helpful. To deal with 
rough noises an alternative to regularity structures is to stay closer to a 
paracontrolled approach~\cite{GubinelliTindel}, which has recently been proposed 
for certain quasilinear SPDEs~\cite{BailleulDebusscheHofmanova,FurlanGubinelli,
OttoWeber,OttoWeber1}.\medskip

Since the linear operator $A$ depends on the solution itself, which will be in our 
case a stochastic process, we cannot apply the standard fixed-point argument 
as in ~\cite{Amann2,Yagi1}. Namely, if we denote with $U^{u}$
the random evolution operator generated by $A(u)$, one naturally expects that the 
mild solution of~\eqref{introduction:equation} should be given by the 
variation-of-constants formula
\begin{equation}
\label{conv1}
u(t) = U^{u}(t,0) u_{0} + \int\limits_{0}^{t} U^{u}(t,s)F(s,u(s))~\txtd s 
+ \int\limits_{0}^{t} U^{u}(t,s)\sigma(s,u(s))~\txtd W(s).
\end{equation}
As already observed in~\cite{PronkVeraar}, and justified in Sections~\ref{p} 
and~\ref{qspde}, the random evolution operator $U^{u}(t,s,\omega)$ does not 
satisfy the necessary adaptedness properties required in order to define the 
It\^{o}-integral. More precisely, it turns out to be only 
$\mathcal{F}_{t}$-adapted and not $\mathcal{F}_{s}$-adapted. Consequently, 
the stochastic convolution given in~\eqref{conv1} is not well-defined in the 
It\^{o}-sense. This situation is commonly met for instance in the theory of 
stochastic evolution equations with time-dependent random 
generators~\cite{PronkVeraar}. A way out of this situation is to introduce 
a new concept of mild solution for~\eqref{introduction:equation}, which is 
based on the integration-by-parts formula for stochastic convolutions. This 
is motivated in \cite[Sec.~4]{PronkVeraar} as well as in Appendix~\ref{a} here
for convenience. Using this approach, we prove by means of fixed-point arguments 
that the mild solution of~\eqref{introduction:equation} is given by
\begin{align}\label{pms}
u(t) & = U^{u}(t,0) u_{0} + U^{u}(t,0) \int\limits_{0}^{t}
 \sigma(r,u(r))~\txtd W(r) + \int\limits_{0}^{t} U^{u}(t,s) F(s,u(s))~\txtd s
\nonumber \\
& -\int\limits_{0}^{t} U^{u}(t,s) A(u(s)) \int\limits_{s}^{t} 
\sigma(r,u(r)) ~\txtd W(r)~\txtd s.
\end{align}
One can show that the stochastic convolutions appearing in the formula above 
can be defined pathwise, therefore we will call this a pathwise mild solution 
of~\eqref{introduction:equation}; see Section~\ref{qspde}.\\

 Additionally to the adaptedness issue stated above, note that there are several technical difficulties required in order to obtain sufficient space-time regularity results for (\ref{pms}) which are necessary to set-up the fixed-point argument from the deterministic theory of \cite{Yagi}. Therefore, it is by no means straightforward to see why (\ref{pms}) is the right solution concept for our original problem and how these theories fit together. We also emphasize that up to now there is no theory available for mild solutions for quasilinear SPDEs in contrast to the deterministic case. The semigroup approach has turned out to be a very powerful tool for the analysis of quasilinear PDEs, see \cite{pazy}, \cite{Amann1}, \cite{Yagi} and the references specified therein.  
 This work represents an important step in exploiting the semigroup methods from the deterministic setting in the stochastic one.\\

Another important 
feature of this approach is that it can be applied to more general stochastic 
processes $(S(t))_{t\in[0,T]}$ not just the Brownian motion, since one
needs to establish an integration theory only for 
\benn
\int\limits_{0}^{T} \sigma(t)~\txtd S(t) 
\eenn
under suitable assumptions on $\sigma$. In this case $S$ does not even have to 
be a semimartingale, so one can consider~\eqref{introduction:equation} perturbed 
by an additive-fractional noise thereby generalizing results 
in~\cite{LiuTudor,LiuYan}; we intend to explore this in a future work.\\ 

We deal here with local in time existence of solutions for (\ref{introduction:equation}). To obtain global-in-time solutions for~\eqref{introduction:equation}, one has to 
solve two further issues: (a) finite-time blow-up and (b) degeneration of the operator
$A(u)$. The latter is often related in practice to preserving certain positivity
assumptions present in the initial data and naturally relates to stochastic maximum 
principles~\cite{BarbuDaPratoRoeckner,DenisMatoussi,DenisMatoussiStoica,Oksendal1}. 
The issue (a) of blow-up certainly also occurs already for many classical 
SPDEs~(see e.g.~\cite{deBouardDebussche,DozziLopezMimbela,MuellerSowers1}) but 
also plays a key role for quasilinear SPDE problems~\cite{Hornung1}. For completeness, 
we provide two very simple quasilinear counter-examples involving (a) and (b) 
to demonstrate that we cannot expect global-in-time existence 
for~\eqref{introduction:equation} in general; see Section~\ref{examples}. 
Yet, we conjecture that for many quasilinear SPDEs, where
global existence is known for the PDE ($\sigma\equiv 0$), there is a natural
choice of noise term $\sigma\not\equiv 0$ such that also the SPDE has global-in-time
existence.  The choice of noise term is definitely case-dependent but it should be
possible to deal with many cases arising directly from modelling considerations,
which is another interesting direction for future research.
\section{Preliminaries}
\label{p}

Throughout this work we fix a time horizon $T>0$ and a stochastic basis $(\Omega,\mathcal{F}, (\mathcal{F}_{t})_{t\geq 0}, \mathbb{P})$ with a complete, right-continuous filtration. We also use the standard notation $\omega$ for elements in $\Omega$. \\

We first collect basic properties and results concerning parabolic evolution families 
generated by random nonautonomous operators $\{A(t,\omega)\}_{t\in[0,T]}$ on a separable 
Banach space $X$. We make following assumptions, which are going to ensure that $A(t,\omega)$ 
generates a \emph{parabolic evolution system}, which is a family of linear operators 
depending on two-time parameters and having similar properties to analytic 
$C_{0}$-semigroups \cite[Section~II.2]{Amann4}:
	
\begin{itemize}
	\item [(A1)] The spectrum of $A(t,\omega)$ is contained in an open sectorial domain, 
	more precisely
	$$\sigma(A(t,\omega))\subset \Sigma_{\varphi}:=\{ \lambda\in\mathbb{C}\mbox{ : } 
	|\mbox{arg}\lambda|<\varphi \}, \mbox{ for } (t,\omega)\in[0,T]\times\Omega $$
	with a fixed angle $0<\varphi<\frac{\pi}{2}$.
	\item [(A2)] There exists a constant $M\geq 1$ such that the resolvent estimate
	$$||(\lambda \Id -A(t,\omega))^{-1} ||_{\mathcal{L}(X)}\leq \frac{M}{|\lambda |+1}$$ 
	holds true $\mbox{ for } \lambda\notin \Sigma_{\varphi} \mbox{ and }
	(t,\omega)\in[0,T]\times\Omega$, where $\mathcal{L}(X)$ is the space of linear 
	operators on $X$ with norm $\|\cdot\|_{\mathcal{L}(X)}$ 
	and $\Id$ is the identity map.
	\item [(A3)] The Acquistapace-Terreni condition~\cite{Acquistapace} is fulfilled, 
	namely there exist two exponents $\nu,\delta\in(0,1]$ with $\nu+\delta>1$ such 
	that for every $\omega\in\Omega$ there exists a constant $L(\omega)\geq 0$ such 
	that for all $s,t\in[0,T]$ we have
	\begin{equation}\label{at}
	|| A^{\nu}(t,\omega) (A(t,\omega)^{-1} - A(s,\omega)^{-1}) ||_{\mathcal{L}(X)}
	\leq L(\omega) |t-s|^{\delta}. 
	\end{equation}
\end{itemize} 

Let $\Gamma:=\{(s,t) \in[0,T]^{2} \mbox{ : } s\leq t \}$. In the following, for 
notational simplicity we drop the $\omega$-dependence of $A(t)$ and only indicate it explicitly for certain statements, where its crucial role is clarified.\\

\textbf{Covention}:  Since certain expressions and constants depend on several parameters, i.e.~$A(t,\omega)$ we always omit the last symbol whenever this dependence is clear.\\

By applying the 
results in~\cite{Acquistapace} pointwise in $\Omega$, see 
also~\cite[Theorem~2.2]{PronkVeraar}, we obtain:

\begin{theorem}
\label{po}
There exists a unique map $U:\Gamma\times\Omega\to\mathcal{L}(X)$ such that
		\begin{itemize}
			\item [(T1)] for all $t\in[0,T]$, $U(t,t)=\emph{Id}$;
			\item [(T2)] for all $r\leq s\leq t$, $U(t,s)U(s,r)=U(t,r)$;
			\item [(T3)] for every $\omega\in\Omega$, the map $U(\cdot,\cdot,\omega)$ 
			is strongly continuous;
			\item [(T4)] there exists a mapping $C:\Omega\to\mathbb{R}_{+}$, such 
			that for all $s\leq t$, one has $$|| U(t,s)||_{\mathcal{L}(X)}\leq C. $$
			\item [(T5)] for every $s<t$ it holds $\frac{\partial}{\partial t} 
			U(t,s)=A(t)U(t,s)$ and $\frac{\partial}{\partial s} U(t,s)=-U(t,s)A(s)$ 
			pointwise in $\Omega$. Moreover, there exists a mapping 
			$C:\Omega\to\mathbb{R}_{+}$ such that $$||A(t)U(t,s) ||\leq C (t-s)^{-1}. $$
		\end{itemize}
\end{theorem}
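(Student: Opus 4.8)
The plan is to derive Theorem~\ref{po} by applying the classical Acquistapace--Terreni construction~\cite{Acquistapace} pointwise in $\omega\in\Omega$, and then to track the $\omega$-dependence of the resulting constants. First I would fix an arbitrary $\omega\in\Omega$. By hypotheses (A1)--(A3), the family $\{A(t,\omega)\}_{t\in[0,T]}$ satisfies exactly the Acquistapace--Terreni assumptions: uniform sectoriality from (A1)--(A2) (so each $A(t,\omega)$ generates an analytic $C_0$-semigroup with uniform bounds), and the H\"older-type estimate on the resolvent differences from (A3) with $\nu+\delta>1$. The classical theory then produces a parabolic evolution system $U(\cdot,\cdot,\omega):\Gamma\to\mathcal{L}(X)$ via the standard iterative/Volterra construction (e.g.\ writing $U(t,s)=e^{(t-s)A(s)}+\int_s^t e^{(t-\tau)A(\tau)}R(\tau,s)\,\mathrm{d}\tau$ with $R$ the solution of the associated integral equation), and this $U(\cdot,\cdot,\omega)$ satisfies the algebraic identities (T1)--(T2), the strong continuity (T3), the boundedness (T4) with a constant depending only on $M,\varphi,T$ and $L(\omega)$, and the differentiability and smoothing estimate (T5), again with the constant in $\|A(t,\omega)U(t,s,\omega)\|\le C(t-s)^{-1}$ depending only on these quantities. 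Uniqueness of such a $U(\cdot,\cdot,\omega)$ is also part of the classical statement.

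Next I would assemble the pointwise-in-$\omega$ maps into a single map $U:\Gamma\times\Omega\to\mathcal{L}(X)$ by setting $U(t,s,\omega):=U(t,s,\omega)$, i.e.\ just collecting the fibers. Properties (T1), (T2), (T3), and (T5) are then immediate since they hold for each fixed $\omega$. For (T4) and the second estimate in (T5), I would read off from the Acquistapace--Terreni bounds that the constants can be chosen as explicit increasing functions of $L(\omega)$ (and of the fixed quantities $M,\varphi,\nu,\delta,T$), and then \emph{define} $C:\Omega\to\mathbb{R}_+$ to be that expression; this gives the required mapping $C$. Uniqueness of $U$ as a map on $\Gamma\times\Omega$ follows from the fiberwise uniqueness. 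At this level nothing beyond citing~\cite{Acquistapace} (and~\cite[Theorem~2.2]{PronkVeraar}, which does precisely this measurable-in-$\omega$ bookkeeping) is needed.

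The one genuinely nontrivial point — and the step I expect to be the main obstacle — is measurability in $\omega$: strictly speaking, to call $U$ "a map $U:\Gamma\times\Omega\to\mathcal{L}(X)$" in a way that is usable for later stochastic integration one wants $\omega\mapsto U(t,s,\omega)$ to be strongly measurable for each $(s,t)\in\Gamma$. This is not automatic from the fiberwise construction; one argues it by noting that every step of the Acquistapace--Terreni iteration is built from the resolvents $(\lambda\,\mathrm{Id}-A(t,\omega))^{-1}$ via Bochner integrals and uniformly convergent series, so strong measurability of $(t,\omega)\mapsto A(t,\omega)$ (in the resolvent sense) propagates through the construction. Since the present theorem only lists (T1)--(T5) and defers the adaptedness/measurability discussion to Sections~\ref{p} and~\ref{qspde}, I would state measurability only as far as needed here and otherwise refer to~\cite[Theorem~2.2]{PronkVeraar}; the key honest remark to make is that $U(t,s,\cdot)$ turns out to be $\mathcal{F}_t$-measurable rather than $\mathcal{F}_s$-measurable, which is exactly the phenomenon motivating the modified mild-solution concept~\eqref{pms}.
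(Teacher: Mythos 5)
Your proposal is correct and follows essentially the same route as the paper, which itself gives no separate proof of Theorem~\ref{po} but simply invokes the results of \cite{Acquistapace} pointwise in $\Omega$ together with \cite[Theorem~2.2]{PronkVeraar}; your additional remarks on tracking the $\omega$-dependence of the constants and on the measurability of $\omega\mapsto U(t,s,\omega)$ match what the paper defers to \eqref{est:u} and Proposition~\ref{meas}. No gap.
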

	
Consequently, if (A1)-(A3) hold true $\{A(t,\omega)\}_{t\in[0,T]}$ generates the evolution system/family 
$\{U(t,s,\omega)\}_{0\leq s\leq t\leq T}$. In our case, see Section~\ref{qspde} the evolution system $U^{u}(t,s,\omega)$ will additionally depend on the solution $u$ of a quasilinear SPDE.\medskip

In contrast to the deterministic setting all constants specified above depend on $\omega$ which causes several technical difficulties, more precisely $C$ depends in general on $L(\omega)$ and on $\delta$. For instance applying Theorem 4.4.1 in \cite{Amann4} one has
\begin{align}\label{est:u}
||U(t,s,\omega)||_{\mathcal{L}(X)}\leq \widetilde{C} e ^{\mu(\omega)(t-s)},~\mbox{for } (t,s)\in\Gamma,
\end{align}
where $\mu(\omega,\delta)=\mu(\omega)=\widetilde{C}(\delta) L(\omega)^{1/\delta}$. Here $\widetilde{C}$ stands for an arbitrary constant and $\widetilde{C}(\delta)$ indicates the dependence of $\widetilde{C}$ on the H\"older exponent from (\ref{at}).
 We point out following fact regarding this issue which is crucial for the computation in Section~\ref{qspde}.
\begin{remark}
	 \begin{itemize}
	 	\item [1)] One can  assume that the mapping $L:\Omega\to\mathbb{R}^{+}$ 
	 	introduced in~\eqref{at} is bounded in $\Omega$, analogously 
	 	to~\cite[Section 5.2]{PronkVeraar}. The general case can be treated by a 
	 	localization argument~\cite[Section 5.3]{PronkVeraar}, namely one introduces appropriate stopping times $(\tau_{n})_{n\in\mathbb{N}}$ and considers $A_{n}(t,\omega):=A(t\wedge\tau_{n}(\omega),\omega)$.
	 	\item [2)] In our case the generators will depend on the solution itself, so it is meaningful to control the solution process $u$ in order to make sure that the corresponding evolution operator $U^{u}$ is indeed well-defined. 
	 	 Consequently we deal with $A_{n}(u(t,\omega)):=A(u(t\wedge\tau_{n}(\omega),\omega))$ as stated in (A1')-(A3') in Section~\ref{qspde}. Therefore, all constants arising from the estimates involving $A(u)$ and $U^{u}$ will depend on $\tau_{n}(\omega)$, as precisely specified in Section \ref{qspde} below.
	 \end{itemize}

\end{remark}

The following estimates for analytic $C_{0}$-semigroups and parabolic evolution 
operators are essential for the computation in Section \ref{qspde}. These can be 
looked up in \cite[Section 2.6, p.~69]{pazy}, \cite[Section 8.1, p.~154]{Yagi1} 
and the references specified therein; note that they hold pointwise in $\omega\in\Omega$
similar to Theorem~\ref{po}.

\begin{theorem} 
\label{estan}
There exists a mapping $C:\Omega\to\mathbb{R}_{+}$ such that for $0\leq s <t \leq T$ and $\widetilde{\alpha},\widetilde{\beta}\in(0,1]$ we have 
	\begin{equation}\label{an}
	||A^{\widetilde\beta}(t) U(t,s) ||_{\mathcal{L}(X)} \leq C (t-s)^{-\widetilde\beta} \mbox{, } \quad 
	|| U(t,s)A^{\widetilde\beta}(t) ||\leq C (t-s)^{-\widetilde\beta},
	\end{equation}
	as well as
	\begin{equation} \label{an2}
	||A^{\widetilde\beta}(t) U(t,s)A^{-\widetilde\alpha}(s) ||_{\mathcal{L}(X)}\leq C (t-s)^{\widetilde\alpha-\widetilde\beta}.
	\end{equation} 
\end{theorem}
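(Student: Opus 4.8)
The plan is to reduce the two estimates in Theorem~\ref{estan} to the single-operator bounds already contained in Theorem~\ref{po}, together with the classical analytic-semigroup machinery of fractional powers. First I would recall that (A1)--(A2) guarantee that each $A(t,\omega)$ is a sectorial operator with spectrum bounded away from $0$, so the fractional powers $A^{\widetilde\beta}(t,\omega)$ and their negative counterparts $A^{-\widetilde\alpha}(t,\omega)$ are well-defined closed (resp.\ bounded) operators, and the moment inequality $\|A^{\widetilde\beta}(t)x\|\le C\|A(t)x\|^{\widetilde\beta}\|x\|^{1-\widetilde\beta}$ holds with a constant that is uniform in $t$ but measurable in $\omega$ — this is where the $\omega$-dependent constant $C:\Omega\to\R_+$ enters, inherited from the constants in (A1)--(A3). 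All of this is done pointwise in $\omega$, exactly as Theorem~\ref{po} was obtained by applying \cite{Acquistapace} pointwise.

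For the first estimate in \eqref{an}, the left-hand bound $\|A^{\widetilde\beta}(t)U(t,s)\|\le C(t-s)^{-\widetilde\beta}$ follows by interpolating between the case $\widetilde\beta=1$, which is precisely (T5) (giving the factor $(t-s)^{-1}$ together with $\|U(t,s)\|\le C$ from (T4)), and the case $\widetilde\beta=0$, which is just (T4); more directly, one writes $A^{\widetilde\beta}(t)U(t,s)=A^{\widetilde\beta}(t)\bigl(A(t)U(t,s)\bigr)^{\widetilde\beta}\bigl(U(t,s)\bigr)^{1-\widetilde\beta}$ in the spirit of the moment inequality, or uses the standard representation of $A^{\widetilde\beta}$ as a Dunford integral of resolvents and inserts the resolvent estimate (A2). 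The right-hand bound $\|U(t,s)A^{\widetilde\beta}(t)\|\le C(t-s)^{-\widetilde\beta}$ is the adjoint/transpose version and is obtained the same way, using the representation $\frac{\partial}{\partial s}U(t,s)=-U(t,s)A(s)$ from (T5) — here one must be slightly careful that the fractional power is of $A(t)$, not $A(s)$, which is handled by combining with the Acquistapace--Terreni bound (A3) to compare $A^{\widetilde\beta}(t)$ and $A^{\widetilde\beta}(s)$.

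For the refined estimate \eqref{an2}, the idea is to factor
\benn
A^{\widetilde\beta}(t)U(t,s)A^{-\widetilde\alpha}(s)
= \bigl(A^{\widetilde\beta}(t)U(t,s)A^{-\widetilde\beta}(t)\bigr)\cdot\bigl(A^{\widetilde\beta}(t)A^{-\widetilde\alpha}(s)\bigr)
\eenn
or, more cleanly, to split $U(t,s)=U(t,\tfrac{t+s}{2})U(\tfrac{t+s}{2},s)$ and apply \eqref{an} to each half: the left factor absorbs $A^{\widetilde\beta}(t)$ at cost $(t-s)^{-\widetilde\beta}$, while on the right factor one notes $U(\tfrac{t+s}{2},s)A^{-\widetilde\alpha}(s)=\bigl(A^{\widetilde\alpha}(\tfrac{t+s}{2})U(\tfrac{t+s}{2},s)\bigr)^{-1}\!\!$-type manipulation — more precisely one uses that $\|U(r,s)A^{-\widetilde\alpha}(s)\|\le C(r-s)^{\widetilde\alpha}$, which is the integrated/smoothing counterpart of the first bound in \eqref{an} (integrate $\frac{\partial}{\partial r}$, or use $A^{-\widetilde\alpha}(s)=c\int_0^\infty \lambda^{\widetilde\alpha-1}(\lambda+A(s))^{-1}\,\txtd\lambda$ and the resolvent estimate). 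Multiplying the two gives the exponent $\widetilde\alpha-\widetilde\beta$.

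The main obstacle I anticipate is the bookkeeping of the $\omega$-dependent constants and, relatedly, the mismatch between $A(t)$ and $A(s)$ in the fractional powers: one cannot simply commute $A^{\widetilde\beta}(t)$ past $U(t,s)$ to land on $A^{\widetilde\beta}(s)$, so the two-sided estimate \eqref{an2} genuinely requires invoking (A3) (Acquistapace--Terreni) to control $\|A^{\widetilde\beta}(t)A^{-\widetilde\beta}(s)\|$ and $\|A^{\widetilde\alpha}(s)A^{-\widetilde\alpha}(t)\|$ uniformly on $\Gamma$. Since $\nu+\delta>1$, this comparison is exactly what the Acquistapace--Terreni framework is designed to deliver, so the estimates go through with a constant depending on $L(\omega)$ (hence on $\omega$), consistent with \eqref{est:u}; invoking \cite[Sec.~2.6]{pazy} and \cite[Sec.~8.1]{Yagi1} pointwise in $\omega$ then completes the argument.
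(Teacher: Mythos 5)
The paper offers no proof of Theorem~\ref{estan}: it is quoted from \cite[Section 2.6]{pazy} and \cite[Section 8.1]{Yagi1} and invoked pointwise in $\omega$, so your overall framing (argue pointwise in $\omega$, track the $\omega$-dependence of the constants through $L(\omega)$ from (A3)) is consistent with what the authors intend. Your derivation of the first bound in \eqref{an} is also sound: the moment inequality $\|A^{\widetilde\beta}(t)x\|\le C\|A(t)x\|^{\widetilde\beta}\|x\|^{1-\widetilde\beta}$ applied to $x=U(t,s)y$, combined with (T4) and (T5), gives exactly $C(t-s)^{-\widetilde\beta}$.

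However, your route to \eqref{an2} rests on a false estimate. You claim $\|U(r,s)A^{-\widetilde\alpha}(s)\|_{\mathcal{L}(X)}\le C(r-s)^{\widetilde\alpha}$. This cannot hold: by (T1) and (T3), $U(r,s)A^{-\widetilde\alpha}(s)$ converges strongly to $A^{-\widetilde\alpha}(s)\neq 0$ as $r\downarrow s$, so its operator norm stays bounded away from $0$, while your right-hand side tends to $0$. The gain of the factor $(t-s)^{\widetilde\alpha}$ in \eqref{an2} is not a smallness of $U(\cdot,s)A^{-\widetilde\alpha}(s)$ near the diagonal; it is a reduction of the order of the singularity. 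In the autonomous model case $A^{\widetilde\beta}e^{-rA}A^{-\widetilde\alpha}=A^{\widetilde\beta-\widetilde\alpha}e^{-rA}$, whose norm is $O(r^{\widetilde\alpha-\widetilde\beta})$ because the fractional power the semigroup must absorb has order $\widetilde\beta-\widetilde\alpha$ rather than $\widetilde\beta$. Consequently your midpoint splitting only yields $C(t-s)^{-\widetilde\beta}$, and your alternative factorization $\bigl(A^{\widetilde\beta}(t)U(t,s)A^{-\widetilde\beta}(t)\bigr)\bigl(A^{\widetilde\beta}(t)A^{-\widetilde\alpha}(s)\bigr)$ fails as well, since $A^{\widetilde\beta}(t)A^{-\widetilde\alpha}(s)$ is unbounded whenever $\widetilde\beta>\widetilde\alpha$. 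The standard argument behind the cited references (and behind the estimates \eqref{id1}--\eqref{id2} used later in the paper) instead compares $U(t,s)$ with the frozen-coefficient semigroup $e^{-(t-s)A(s)}$ through the Acquistapace--Terreni construction, bounds the leading term $A^{\widetilde\beta}(t)e^{-(t-s)A(s)}A^{-\widetilde\alpha}(s)$ by reducing it to $A^{\widetilde\beta-\widetilde\alpha}(s)e^{-(t-s)A(s)}$ up to bounded factors, and controls the remainder using $\nu+\delta>1$. Your appeal to (A3) to compare $A^{\widetilde\beta}(t)$ with $A^{\widetilde\beta}(s)$ is the right instinct, but it does not repair the incorrect smoothing estimate on which your exponent $\widetilde\alpha-\widetilde\beta$ depends.
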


Again, $C$ depends in general on $L(\omega)$ and on $\delta$ introduced in (\ref{at}). For more details and properties of fractional powers of sectorial operators 
$A^{\widetilde\beta}$ for $\widetilde\beta>0$ and the usual fractional space $X_{(\cdot)}$ we refer the reader to~\cite[Theorem 6.13, p.~74]{pazy}.

\begin{remark}
	Note that the first two assumptions imply that $A(t)$ generates an analytic $C_{0}$-semigroup which is denoted by $e^{-r A(t)}$. In this case there exists a mapping $C:\Omega\to \mathbb{R}_{+}$ such that the estimate
	\begin{align*}
	||A^{\widetilde{\theta}}(t) e^{-r A(t)}||_{\mathcal{L}(X)} \leq C r^{-\widetilde{\theta}}, ~~\mbox{for } \widetilde{\theta}>0 \mbox{ and } r>0
	\end{align*}
	holds true.
\end{remark}

  Furthermore, we recall 
  the next result~\cite[Proposition~2.4]{PronkVeraar}, which deals with the 
  measurability of $U$:
  
  \begin{proposition}
  	\label{meas}
  	The evolution system $U:\Gamma\times\Omega\to\mathcal{L}(X)$ is strongly measurable 
  	in the uniform operator topology. Moreover, for each $t\geq s$, the mapping 
  	$\omega\mapsto U(t,s,\omega)\in\mathcal{L}(X)$ is strongly $\mathcal{F}_{t}$-measurable 
  	in the uniform operator topology. 	
  \end{proposition}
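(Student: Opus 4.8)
The plan is to show that $U$ inherits its measurability from the resolvents $(\lambda\Id-A(t,\omega))^{-1}$ by tracking the dependence on $\omega$ through the Acquistapace-Terreni construction~\cite{Acquistapace}. Recall that this construction proceeds, pointwise in $\omega$, in two stages. First, the analytic semigroups are recovered from the resolvents through the Dunford contour integral $e^{-rA(t,\omega)}=\frac{1}{2\pi\txti}\int_{\gamma}e^{-r\lambda}(\lambda\Id-A(t,\omega))^{-1}\,\txtd\lambda$ over a suitable path $\gamma\subset\C\setminus\Sigma_{\varphi}$. Secondly, $U(t,s,\omega)$ is represented by a variation-of-constants type formula $U(t,s,\omega)=e^{-(t-s)A(s,\omega)}+\int_{s}^{t} e^{-(t-r)A(r,\omega)}W(r,s,\omega)\,\txtd r$, in which the kernel $W=\sum_{n\geq 1}W_{n}$ is the norm-convergent Volterra (Neumann) series generated by a first term $W_{1}(t,s,\omega)$ (a combination of $A(t,\omega)-A(s,\omega)$ with $e^{-(t-s)A(s,\omega)}$, made rigorous through resolvents as in~\cite{Acquistapace}) and the iteration $W_{n+1}(t,s,\omega)=\int_{s}^{t} W_{1}(t,r,\omega)W_{n}(r,s,\omega)\,\txtd r$. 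Throughout I will use the measurability built into the setup (as in~\cite{PronkVeraar}): for each $t\in[0,T]$ and $\lambda\notin\Sigma_{\varphi}$ the resolvent $\omega\mapsto(\lambda\Id-A(t,\omega))^{-1}\in\mathcal{L}(X)$ is strongly $\mathcal{F}_{t}$-measurable, and $\lambda\mapsto(\lambda\Id-A(t,\omega))^{-1}$ is analytic, hence norm-continuous, on $\C\setminus\Sigma_{\varphi}$.

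First I would handle the semigroups. Since the integrand $(\lambda,\omega)\mapsto e^{-r\lambda}(\lambda\Id-A(t,\omega))^{-1}$ is norm-continuous in $\lambda$ and strongly measurable in $\omega$, the Dunford integral is an operator-norm limit of Riemann sums, each of which is a finite linear combination of strongly measurable $\mathcal{L}(X)$-valued maps; hence $(r,t,\omega)\mapsto e^{-rA(t,\omega)}$ is strongly measurable and, for fixed $t$, $\mathcal{F}_{t}$-measurable in $\omega$. The same reasoning applied to $\lambda\mapsto\lambda^{\widetilde\beta}e^{-r\lambda}(\lambda\Id-A(t,\omega))^{-1}$ yields the corresponding statements for $A^{\widetilde\beta}(t,\omega)e^{-rA(t,\omega)}$ and, more generally, for every bounded operator that enters the construction. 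Consequently $W_{1}$, being assembled from $e^{-(t-s)A(s,\omega)}$ and resolvents of $A(t,\omega)$ and $A(s,\omega)$, is strongly measurable on $\Gamma\times\Omega$. Since a product of two strongly measurable $\mathcal{L}(X)$-valued maps is strongly measurable and a Bochner integral of a strongly measurable integrand is strongly measurable, an induction shows that each $W_{n}$ is strongly measurable on $\Gamma\times\Omega$; the Volterra series converges in $\mathcal{L}(X)$ uniformly on $\Gamma$ (pointwise in $\omega$), so $W$, and therefore $U$, is strongly measurable on $\Gamma\times\Omega$ in the uniform operator topology.

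For the adaptedness statement, the crucial observation is that the formulas above express $U(t,s,\omega)$ solely in terms of $\{(\lambda\Id-A(r,\omega))^{-1}\,:\, s\leq r\leq t,\ \lambda\notin\Sigma_{\varphi}\}$; for each such $r\leq t$ this resolvent is $\mathcal{F}_{r}$-measurable, a fortiori $\mathcal{F}_{t}$-measurable, in $\omega$, so the same Riemann-sum, limit, product and Bochner-integral arguments give that $\omega\mapsto U(t,s,\omega)$ is strongly $\mathcal{F}_{t}$-measurable. The main obstacle I expect is bookkeeping rather than conceptual: one must set up enough joint measurability in the parameters $(\lambda,r,t,\omega)$ to pass legitimately from Riemann sums to Bochner integrals at each stage, and one must work consistently with the notion of strong measurability into the (in general non-separable) space $\mathcal{L}(X)$, i.e.~measurability together with essential separability of the range, since plain Borel measurability for the norm topology need not be preserved under all of these operations. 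This is carried out as in~\cite[Proposition~2.4]{PronkVeraar}, to which we refer for the remaining details.
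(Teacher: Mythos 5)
The paper does not prove this proposition at all --- it simply imports it from \cite[Proposition~2.4]{PronkVeraar} --- and your sketch correctly reconstructs the argument used there: measurability is propagated from the resolvents through the Dunford integral for the analytic semigroups and the norm-convergent Volterra series for $U(t,s,\omega)$, using that strong measurability in $\mathcal{L}(X)$ survives products, Bochner integration and operator-norm limits, while $\mathcal{F}_t$-adaptedness follows because $U(t,s,\omega)$ is built only from resolvents of $A(r,\omega)$ with $r\le t$. You are also right to flag that the strong $\mathcal{F}_t$-measurability of $\omega\mapsto(\lambda\Id-A(t,\omega))^{-1}$ is an extra hypothesis of the setup (present in \cite{PronkVeraar} but not listed among (A1)--(A3) here) without which the statement cannot be derived.
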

 
For the sake of completeness, we provide now some known results concerning 
stochastic calculus, which will be required further on. We let $H$ and $Z$ stand 
for two separable Hilbert spaces and $(W(t))_{t\in[0,T]}$ for an $H$-cylindrical 
Brownian motion, meaning that 
$$W(t)=\sum\limits_{n=1}^{\infty} w_{n}(t) e_{n}, $$ 
with $(w_{n}(\cdot{}))_{n\geq 1}$ being mutually independent real-valued standard 
Wiener processes relative to $(\mathcal{F}_{t})_{t\geq 0}$ and $(e_{n})_{n\geq 1}$ 
an orthonormal basis in the separable Hilbert space $H$. With $\mathcal{L}_{2}(H,Z)$ 
we denote the space of Hilbert-Schmidt operators from $H$ to $Z$.	
As justified in Section \ref{qspde}, for our aims it will be enough to analyze the 
stochastic integral respectively the stochastic process
\be 
\label{eq:intmidfirst}
\left(\int\limits_{0}^{t}\sigma(r)~\txtd W(r)\right)_{t\in[0,T]} 
\ee
only for strongly-measurable, adapted stochastic processes 
$\sigma\in L^{0}(\Omega;L^{2}(0,T;\mathcal{L}_{2}(H,Z)))$. Here $L^0$ indicates measurability and $L^p$ is going to denote the usual Lebesgue spaces. \\

We recall (consult \cite[Section 4.1]{PronkVeraar} and the references specified 
therein) that the process~\eqref{eq:intmidfirst} exists and is pathwise 
continuous for $\sigma\in L^{0}(\Omega; L^{2}(0,T;\mathcal{L}_{2}(H,Z) ))$.
Moreover, one has the one-sided estimate
\begin{eqnarray}
|| J(\sigma)||_{L^{p}(\Omega; \cC([0,T];Z)) } 
\leq C ||\sigma ||_{L^{p}(\Omega; L^{2}(0,T;\mathcal{L}_{2}(H,Z) ) ) },
\end{eqnarray} 
where for $t\in[0,T]$ we set $J(\sigma)(t):=\int\limits_{0}^{t} 
\sigma(r)~\txtd W(r) $. 
\medskip

Furthermore, for $J(\sigma)$ the following regularity 
results~\cite[Proposition 4.4]{PronkVeraar1} are available and will be employed 
in Section~\ref{qspde}. 

\begin{proposition}
\label{reg}
Let $0<\widehat\alpha<1/2$, $p\in[2,\infty)$ and $\sigma$ be a strongly measurable 
adapted process, belonging to $L^{0}(\Omega; L^{p}(0,T;\mathcal{L}_{2}(H,Z)) )$. 
Then $J(\sigma)\in W^{\widehat\alpha,p}(0,T;Z)$ almost surely (a.s.), where 
$W^{\widehat\alpha,p}$ is the notation for the usual Sobolev spaces.	
\end{proposition}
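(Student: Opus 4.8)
The plan is to reduce, by a localization argument, to a process that is $p$-integrable over $\Omega\times(0,T)$, and then to bound the Sobolev--Slobodeckii norm of $J(\sigma)$ directly through a Kolmogorov-type increment estimate. The hypothesis $\widehat\alpha<1/2$ will enter at exactly one point — the integrability of a certain double time-integral — and that power bookkeeping is really the only delicate step; everything else is routine.

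\emph{Step 1 (localization).} For $n\in\N$ put $\tau_{n}:=\inf\{t\in[0,T] : \int_{0}^{t}\|\sigma(r)\|_{\mathcal{L}_{2}(H,Z)}^{p}\,\txtd r\geq n\}\wedge T$, which is a stopping time since $r\mapsto\int_{0}^{r}\|\sigma\|_{\mathcal{L}_{2}(H,Z)}^{p}$ is continuous, nondecreasing and adapted, and set $\sigma_{n}:=\mathbbm{1}_{[0,\tau_{n}]}\sigma$. Then $\int_{0}^{T}\|\sigma_{n}(r)\|_{\mathcal{L}_{2}(H,Z)}^{p}\,\txtd r\leq n$ a.s., so $\sigma_{n}\in L^{p}(\Omega;L^{p}(0,T;\mathcal{L}_{2}(H,Z)))$, and by locality of the stochastic integral $J(\sigma_{n})=J(\sigma)$ on $[0,\tau_{n}]$. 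Since $\sigma\in L^{0}(\Omega;L^{p}(0,T;\mathcal{L}_{2}(H,Z)))$ forces $\int_{0}^{T}\|\sigma\|_{\mathcal{L}_{2}(H,Z)}^{p}\,\txtd r<\infty$ a.s., for a.e.\ $\omega$ there is an $n$ with $\tau_{n}(\omega)=T$, whence $J(\sigma)=J(\sigma_{n})$ on all of $[0,T]$. It therefore suffices to treat $\sigma\in L^{p}(\Omega;L^{p}(0,T;\mathcal{L}_{2}(H,Z)))$, for which I shall in fact prove the quantitative statement $J(\sigma)\in L^{p}(\Omega;W^{\widehat\alpha,p}(0,T;Z))$.

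\emph{Step 2 (increments and the threshold $\widehat\alpha<1/2$).} For $0\leq s<t\leq T$ one has $J(\sigma)(t)-J(\sigma)(s)=\int_{s}^{t}\sigma(r)\,\txtd W(r)$, so — using that $Z$ is a Hilbert space — the Burkholder--Davis--Gundy inequality (here $p\geq 2$) followed by H\"older in time gives
\[
\E\|J(\sigma)(t)-J(\sigma)(s)\|_{Z}^{p}\leq C_{p}\,\E\Big(\int_{s}^{t}\|\sigma(r)\|_{\mathcal{L}_{2}(H,Z)}^{2}\,\txtd r\Big)^{p/2}\leq C_{p}\,(t-s)^{p/2-1}\,\E\int_{s}^{t}\|\sigma(r)\|_{\mathcal{L}_{2}(H,Z)}^{p}\,\txtd r.
\]
Inserting this into the Sobolev--Slobodeckii seminorm $\int_{0}^{T}\!\int_{0}^{T}\|f(t)-f(s)\|_{Z}^{p}\,|t-s|^{-1-\widehat\alpha p}\,\txtd t\,\txtd s$, pulling $\E$ inside by Tonelli, and reorganizing the resulting (triple) integral over the region $\{0\leq s\leq r\leq t\leq T\}$ leads to the bound
\[
C_{p}\,\E\int_{0}^{T}\|\sigma(r)\|_{\mathcal{L}_{2}(H,Z)}^{p}\Big(\int_{0}^{r}\!\int_{r}^{T}(t-s)^{\,p/2-2-\widehat\alpha p}\,\txtd t\,\txtd s\Big)\txtd r.
\]
The inner double integral is finite and bounded uniformly in $r\in[0,T]$ exactly when $p/2-2-\widehat\alpha p>-2$, i.e.\ when $\widehat\alpha<1/2$; this is where, and the only place where, the assumption is used, and it is the crux of the argument. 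Hence the expected $p$-th power of the Sobolev--Slobodeckii seminorm of $J(\sigma)$ is at most $C(T,p,\widehat\alpha)\,\|\sigma\|_{L^{p}(\Omega;L^{p}(0,T;\mathcal{L}_{2}(H,Z)))}^{p}$.

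\emph{Step 3 (conclusion).} For the $L^{p}(0,T;Z)$-part of the $W^{\widehat\alpha,p}$-norm I would simply invoke the one-sided maximal estimate for $J$ recalled before the proposition: $\int_{0}^{T}\E\|J(\sigma)(t)\|_{Z}^{p}\,\txtd t\leq T\,\|J(\sigma)\|_{L^{p}(\Omega;\cC([0,T];Z))}^{p}\leq C\,\|\sigma\|_{L^{p}(\Omega;L^{2}(0,T;\mathcal{L}_{2}(H,Z)))}^{p}$, the last norm being finite because $L^{p}(0,T)\hookrightarrow L^{2}(0,T)$. Adding the two contributions yields $J(\sigma)\in L^{p}(\Omega;W^{\widehat\alpha,p}(0,T;Z))$, hence $J(\sigma)\in W^{\widehat\alpha,p}(0,T;Z)$ a.s., and together with Step~1 this gives the claim for general strongly measurable adapted $\sigma\in L^{0}(\Omega;L^{p}(0,T;\mathcal{L}_{2}(H,Z)))$. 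The only genuine obstacle I anticipate is the exponent bookkeeping in Step~2. (An equivalent alternative is the classical factorization method of Da Prato and Zabczyk, writing $J(\sigma)$ as a Riemann--Liouville fractional integral of an $L^{p}$-process and using boundedness of that operator into $W^{\widehat\alpha,p}$ for $\widehat\alpha<1/2$.)
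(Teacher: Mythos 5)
Your proof is correct. Note, however, that the paper does not prove Proposition~\ref{reg} at all: it is imported verbatim from Pronk--Veraar \cite[Proposition 4.4]{PronkVeraar1}, where the argument runs through the factorization/fractional-integral representation of $J(\sigma)$ (essentially the alternative you mention in your closing parenthesis). Your route is the more elementary one: localize via the stopping times $\tau_n$ to reduce to $\sigma\in L^{p}(\Omega;L^{p}(0,T;\mathcal{L}_{2}(H,Z)))$, then bound $\mathbb{E}\,[J(\sigma)]_{W^{\widehat\alpha,p}}^{p}$ directly by Burkholder--Davis--Gundy plus H\"older on increments and Tonelli. The exponent bookkeeping checks out: the increment bound contributes $(t-s)^{p/2-1}$, the Gagliardo weight contributes $(t-s)^{-1-\widehat\alpha p}$, and the reorganized inner integral $\int_{0}^{r}\int_{r}^{T}(t-s)^{p/2-2-\widehat\alpha p}\,\txtd t\,\txtd s$ is finite and bounded uniformly in $r$ precisely when $p/2-2-\widehat\alpha p>-2$, i.e.\ $\widehat\alpha<1/2$, as you say. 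The $L^{p}(0,T;Z)$ part via the one-sided maximal estimate and $L^{p}(0,T)\hookrightarrow L^{2}(0,T)$ is also fine. What your approach buys is a self-contained, quantitative estimate with an explicit constant in $L^{p}(\Omega;W^{\widehat\alpha,p}(0,T;Z))$ after localization; what the factorization approach buys is that it extends verbatim to UMD/type-2 Banach spaces with $\gamma$-radonifying integrands, which is the generality the cited reference (and the paper's later Remark on Banach-space-valued settings) actually needs. One small point worth making explicit if you write this up: Tonelli in Step~2 requires joint measurability of $(t,\omega)\mapsto J(\sigma)(t,\omega)$, which is supplied by the pathwise continuity of $J(\sigma)$ recalled just before the proposition.
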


Due to the embedding 
$$W^{\widehat\alpha,p}(0,T;Z)\hookrightarrow C^{\widehat\alpha-\frac{1}{p}}(0,T;Z)\quad \text{
for $\frac{1}{p}<\widehat\alpha<\frac{1}{2}$},$$ 
one obtains H\"older regularity of the integral process, 
namely $J(\sigma) \in \cC^{\widehat\alpha-\frac{1}{p}}(0,T;Z)$ a.s.. The next crucial result 
will be used throughout the next subsection, see \cite[Proposition 4.1]{PronkVeraar} 
for the full generality of the statement.

\begin{proposition}
\label{hoelderint}
Let $0<\widehat\alpha<1/2$, $p\in[2,\infty)$ and $\sigma$ be a strongly measurable 
adapted process, belonging to $L^{0}(\Omega; L^{p}(0,T;\mathcal{L}_{2}(H,Z)) )$. 
For $\frac{1}{p}<\widehat\alpha<\frac{1}{2}$ there exists a $\sigma$-independent positive 
constant $C_{T}$ which tends to $0$ as $T\searrow 0$, such that
		\begin{equation}
		||J(\sigma) ||_{L^{p}(\Omega; \cC^{\widehat\alpha-\frac{1}{p}} (0,T;Z))} 
		\leq C_{T} ||\sigma ||_{L^{p}(\Omega;L^{p}(0,T;\mathcal{L}_{2}(H,Z)) }.
		\end{equation}
\end{proposition}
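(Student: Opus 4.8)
The plan is to run the factorization method (cf.~\cite{DaPratoZabczyk}) while keeping explicit track of the $T$-dependence of every constant; this simultaneously reproves the membership in Proposition~\ref{reg} and upgrades it to the quantitative bound with a constant vanishing as $T\searrow0$. Fix $\widehat\alpha\in(\tfrac1p,\tfrac12)$ and put $\gamma:=\widehat\alpha-\tfrac1p\in(0,\tfrac12-\tfrac1p)$. Starting from the Beta-integral identity $\int_{s}^{t}(t-r)^{\widehat\alpha-1}(r-s)^{-\widehat\alpha}~\txtd r=\tfrac{\pi}{\sin(\pi\widehat\alpha)}=:c_{\widehat\alpha}^{-1}$ (which holds for all $0\le s<t$ with value independent of $s,t$), I would introduce the auxiliary process $Y(r):=\int_{0}^{r}(r-s)^{-\widehat\alpha}\sigma(s)~\txtd W(s)$ and, by the stochastic Fubini theorem, rewrite
\begin{equation*}
J(\sigma)(t)=c_{\widehat\alpha}\int\limits_{0}^{t}(t-r)^{\widehat\alpha-1}Y(r)~\txtd r=:c_{\widehat\alpha}\,(R_{\widehat\alpha}Y)(t),\qquad t\in[0,T].
\end{equation*}
The proof then splits into an \emph{It\^{o} part}, a bound on $Y$ in $L^{p}(\Omega;L^{p}(0,T;Z))$, and a purely \emph{deterministic part}, the boundedness of the fractional-integral operator $R_{\widehat\alpha}\colon L^{p}(0,T;Z)\to\cC^{\gamma}(0,T;Z)$, the latter being exactly where the hypothesis $\widehat\alpha>\tfrac1p$ is used.

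For the It\^{o} part, I would first note that $s\mapsto(r-s)^{-\widehat\alpha}\sigma(s)\mathbf{1}_{[0,r]}(s)$ is strongly measurable and adapted, and lies in $L^{2}(0,r;\mathcal{L}_{2}(H,Z))$ for a.e.\ $r$ (by Young's convolution inequality, since $u\mapsto u^{-2\widehat\alpha}\mathbf{1}_{\{u>0\}}\in L^{1}(0,T)$ as $2\widehat\alpha<1$, and $\|\sigma(\cdot)\|_{\mathcal{L}_{2}}^{2}\in L^{p/2}(0,T)$ a.s.\ as $p\ge2$). Applying the one-sided Burkholder--Davis--Gundy estimate recalled before Proposition~\ref{reg} to the integrand $(r-\cdot)^{-\widehat\alpha}\sigma(\cdot)$ on $[0,r]$ gives $\mathbb{E}\|Y(r)\|_{Z}^{p}\le C_{p}\,\mathbb{E}\big(\int_{0}^{r}(r-s)^{-2\widehat\alpha}\|\sigma(s)\|_{\mathcal{L}_{2}}^{2}~\txtd s\big)^{p/2}$; integrating over $r\in[0,T]$ and applying Young's inequality once more in the form $\|k*\phi\|_{L^{p/2}(0,T)}\le\|k\|_{L^{1}(0,T)}\|\phi\|_{L^{p/2}(0,T)}$ with $k(u)=u^{-2\widehat\alpha}\mathbf{1}_{\{u>0\}}$ and $\phi=\|\sigma(\cdot)\|_{\mathcal{L}_{2}}^{2}$, I expect to obtain
\begin{equation*}
\|Y\|_{L^{p}(\Omega;L^{p}(0,T;Z))}\le \frac{C_{p}}{\sqrt{1-2\widehat\alpha}}\;T^{\frac12-\widehat\alpha}\;\|\sigma\|_{L^{p}(\Omega;L^{p}(0,T;\mathcal{L}_{2}(H,Z)))},
\end{equation*}
so this step both uses $\widehat\alpha<\tfrac12$ and produces the decay in $T$.

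For the deterministic part, I would estimate $\|(R_{\widehat\alpha}g)(t)-(R_{\widehat\alpha}g)(\theta)\|_{Z}$ for $0\le\theta\le t\le T$ by splitting the difference of kernels and using H\"older's inequality in $r$ with exponent $p'$; the relevant $L^{p'}$-kernel norms are finite precisely because $(\widehat\alpha-1)p'>-1\Leftrightarrow\widehat\alpha>\tfrac1p$, and they yield $\|R_{\widehat\alpha}g\|_{\cC^{\gamma}(0,T;Z)}\le C_{\widehat\alpha,p}\,\|g\|_{L^{p}(0,T;Z)}$ with the H\"older-seminorm contribution to the constant scale-invariant (hence $T$-independent) and the supremum-norm contribution of order $T^{\gamma}$. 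Applying this pathwise to $Y$, taking $L^{p}(\Omega)$-norms and inserting the It\^{o} bound gives
\begin{equation*}
\|J(\sigma)\|_{L^{p}(\Omega;\cC^{\gamma}(0,T;Z))}=c_{\widehat\alpha}\|R_{\widehat\alpha}Y\|_{L^{p}(\Omega;\cC^{\gamma})}\le C_{\widehat\alpha,p}\,\|Y\|_{L^{p}(\Omega;L^{p}(0,T;Z))}\le C_{T}\,\|\sigma\|_{L^{p}(\Omega;L^{p}(0,T;\mathcal{L}_{2}(H,Z)))},
\end{equation*}
with $C_{T}=\mathrm{const}(\widehat\alpha,p)\cdot T^{\frac12-\widehat\alpha}\to0$ as $T\searrow0$, which is the claim. (If the right-hand side is infinite there is nothing to prove, so one may assume $\sigma\in L^{p}(\Omega;L^{p}(0,T;\mathcal{L}_{2}(H,Z)))$ throughout.)

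I expect the main obstacle to be the bookkeeping of the $T$-dependence rather than any single estimate: one must separate, in the target space $\cC^{\gamma}$, the scale-invariant H\"older seminorm --- whose contribution to the operator norm of $R_{\widehat\alpha}$ is genuinely $T$-independent --- from the supremum norm, and then check that multiplying the resulting operator norm by the gain $T^{\frac12-\widehat\alpha}$ from the It\^{o} part still yields a constant vanishing as $T\to0$; this is also what forces both restrictions $\tfrac1p<\widehat\alpha<\tfrac12$. A secondary technical point is the rigorous justification of the stochastic Fubini theorem behind the factorization identity, which needs joint measurability and a.s.\ integrability of $(r,s,\omega)\mapsto(t-r)^{\widehat\alpha-1}(r-s)^{-\widehat\alpha}\sigma(s,\omega)$; both follow from the adaptedness and path regularity of $\sigma$ together with the two Young-inequality bounds used above.
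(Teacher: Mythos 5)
Your argument is correct, but it is worth noting that the paper does not actually prove Proposition~\ref{hoelderint}: it is imported verbatim from \cite[Proposition~4.1]{PronkVeraar} (with the companion regularity statement, Proposition~\ref{reg}, taken from \cite[Proposition~4.4]{PronkVeraar1}), where the route is to bound $\mathbb{E}\|J(\sigma)\|^{p}_{W^{\widehat\alpha,p}(0,T;Z)}$ directly --- using the one-sided Burkholder--Davis--Gundy estimate on increments, $\mathbb{E}\|J(\sigma)(t)-J(\sigma)(s)\|_{Z}^{p}\leq C|t-s|^{p/2-1}\,\mathbb{E}\int_{s}^{t}\|\sigma(r)\|^{p}_{\mathcal{L}_{2}(H,Z)}\,\txtd r$, integrated against the Sobolev--Slobodeckij kernel $|t-s|^{-1-\widehat\alpha p}$ --- and then to invoke the embedding $W^{\widehat\alpha,p}\hookrightarrow \cC^{\widehat\alpha-1/p}$ for $\widehat\alpha>1/p$; the decay of $C_{T}$ there comes from the extra power of $|t-s|$ surviving the double integral over $[0,T]^{2}$. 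Your factorization argument \`a la \cite{DaPratoZabczyk} is a genuine, self-contained alternative, and it checks out: the Beta-function identity and stochastic Fubini step are justified by exactly the two Young/H\"older bounds you list; the It\^{o} step correctly isolates the factor $\|u^{-2\widehat\alpha}\|_{L^{1}(0,T)}^{1/2}\sim T^{1/2-\widehat\alpha}$, which is where $\widehat\alpha<1/2$ and the vanishing of $C_{T}$ enter; and the deterministic mapping property $R_{\widehat\alpha}:L^{p}(0,T;Z)\to\cC^{\widehat\alpha-1/p}(0,T;Z)$ holds with a $T$-uniform H\"older-seminorm constant (by the scaling $u\mapsto hv$ in the kernel-difference integral, convergent at $0$ precisely because $(\widehat\alpha-1)p'>-1$, i.e.\ $\widehat\alpha>1/p$) and an $O(T^{\widehat\alpha-1/p})$ sup-norm contribution, so the product of the two operator norms still tends to $0$. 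The one advantage of your route is that the $T$-dependence of $C_{T}$ is explicit ($C_{T}\lesssim T^{1/2-\widehat\alpha}$ for $T\leq 1$) and the two hypotheses on $\widehat\alpha$ are cleanly decoupled between the stochastic and deterministic halves; the Sobolev-embedding route used in \cite{PronkVeraar} avoids the stochastic Fubini theorem and extends more readily to the type-$2$ Banach space setting mentioned after Proposition~\ref{hoelderint}, where the factorization identity requires $\gamma$-radonifying machinery.
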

	
\begin{remark}
	The assertions above remain valid for type-$2$ Banach spaces (e.g. $L^{p}$-spaces 
	for $p\geq 2$, Sobolev-spaces $W^{k,p}$ for $p\geq 2$), consult \cite{PronkVeraar} 
	and \cite{PronkVeraar1}. In this case one has to replace the Hilbert-Schmidt 
	operators by $\gamma$-radonifying ones. Given a separable Hilbert space $H$ and 
	a separable Banach space $Z$, we call an operator $\cR:H\to Z$ a $\gamma$-radonifying 
	operator if
	\begin{equation*}
	\mathbb{E}\left\|\sum\limits_{n=1}^{\infty}\gamma_{n}\cR e_{n} \right\|^{2}_{Z}<\infty,
	\end{equation*}
	where $(\gamma_{n})_{n\geq 1}$ is a sequence of independent standard Gaussian 
	random variables on $(\Omega,\mathcal{F},\mathbb{P})$ and $(e_{n})_{n\geq 1}$ is an 
	orthonormal basis in $H$. The space of $\gamma$-radonifying operators $\gamma(H,X)$ is 
	then endowed with the norm
	$$\left(\mathbb{E}\left\|\sum\limits_{n=1}^{\infty}\gamma_{n}\cR e_{n} 
	\right\|^{2}_{Z}\right)^{1/2},$$
	which does not depend on the choice of $(\gamma_{n})_{n\geq 1}$ and $(e_{n})_{n\geq 1}$.
	If $Z$ is isomorphic with a Hilbert space then $\gamma(H,Z)$ isometrically coincides 
	with $\mathcal{L}_{2}(H,Z)$. In summary, the computations in Section~\ref{qspde}  carry over to the Banach space-valued setting, although we present them
	here only in a Hilbert space setting.
\end{remark}	

\section{Quasilinear SPDEs}
\label{qspde}

In this section we analyze stochastic quasilinear SPDEs using fixed-point arguments. 
In the deterministic case, this technique is known and can be found 
in \cite{Amann2}, \cite[Chapter 5]{Yagi1} or \cite{Yagi}. As already emphasized, 
since the linear part also depends on the solution itself, the corresponding 
parabolic evolution operators will no longer have the necessary measurability 
properties required to define the It\^{o}-integral, recall Proposition~\ref{meas}. 
Therefore, our ansatz is similar to the one used in \cite{PronkVeraar} to deal 
with parabolic SPDEs with time-dependent random generators. Combining this approach 
with the fixed-point arguments of \cite{Yagi1}, \cite{Yagi} and \cite{Amann2}, 
we are able to prove short-time existence for quasilinear SPDEs. In contrast to the non-autonomous random case we have to deal here with several technical difficulties, such as finding the appropriate function spaces for the fixed-point argument of \cite{Yagi5} and using the right localization techniques. Note that even 
in the deterministic case, quasilinear PDEs may not possess global-in-time solutions 
without further assumptions, see for instance \cite{Amann2}, \cite{Juengel1} 
and Section~\ref{examples}.\medskip

In the following, let $X,Y$ and $Z$ denote three separable Hilbert spaces 
such that $Z\hookrightarrow Y \hookrightarrow X$ and let $K$ stand for an arbitrary open ball in $Z$. More precisely $K:=\{V\in Z\mbox{ : } ||V||_{Z}< R\}$ for a deterministic fixed $R>0$.\\

The first step is to consider the quasilinear Cauchy problem
\begin{equation}
\begin{cases}\label{cp1}
~\txtd u(t)=\left[(Au(t))(u(t)) + f(t)\right]~\txtd t + \sigma(t) ~ \txtd W(t),~~t\in[0,T] \\
u(0)=u_{0}\in K \mbox{ a.s.}
\end{cases}
\end{equation}
 The results obtained for the inhomogenuous problem (\ref{cp1}) will be further 
extended to the nonlinear case, more precisely we will include nonlinearities of 
semilinear type. 

\begin{definition}
\label{definition:solution} \textbf{\em (Local solution)}
A local pathwise mild solution for (\ref{cp1}) is a pair $(u,\tau)$, where $\tau$ is a strictly positive stopping time and the stochastic process $\{u(t)\mbox{ : }t\geq 0\}$ is $(\mathcal{F}_{t})_{t\in[0,\tau)}$-adapted and satisfies almost surely for every $t\geq 0$
\begin{align}\label{lsg}
u(t)& = U^{u}(t,0)u_{0} + U^{u}(t,0)\int\limits_{0}^{t}\sigma(s)~\txtd W(s) 
+ \int\limits_{0}^{t} U^{u}(t,s)f(s) ~\txtd s\\
	&  - \int\limits_{0}^{t} U^{u}(t,s)A(u(s))\int\limits_{s}^{t}
	\sigma(r)~\txtd W(r) ~\txtd s.	\end{align}
\end{definition}

\begin{remark}
The concept \emph{pathwise mild solution} introduced in \cite{PronkVeraar} 
is justified by the integration by parts formula for stochastic convolutions
as motivated in \cite[Section 4.2]{PronkVeraar} as well as Appendix \ref{a}. 
In this way, one overcomes the difficulty that the It\^{o}-integral
\benn
\int\limits_{0}^{t}U^{u}(t,s,\omega)\sigma(s) ~\txtd W(s)  
\eenn
cannot be defined since the mapping $\omega\mapsto U^{u}(t,s,\omega)$ introduced 
in Section \ref{p} is according to Proposition \ref{meas} only 
$\mathcal{F}_{t}$-measurable and not $\mathcal{F}_{s}$-measurable. Furthermore, 
as shown in \cite[Theorem 3.4-3.5]{PronkVeraar} the convolution-type integrals 
above can be defined in a pointwise sense. This fact, together with certain pathwise 
regularity results, will be exploited in the construction of solutions below.
\end{remark}

\begin{definition}
A local \emph{pathwise mild solution} $\{u(t) \mbox{ : } t\in[0,\tau)\}$ 
for (\ref{cp1}) is unique if for any other local pathwise mild solution 
$\{\widetilde{u}(t) \mbox{ : } t\in[0,\widetilde{\tau})\}$ 
of (\ref{cp1}), the processes $u$ and $\widetilde{u}$ are equivalent 
on $[0,\tau\wedge\widetilde{\tau})$.
\end{definition}
	
\begin{definition}
\label{maxlocsol} \em(\textbf{Maximal and global solution})
We call $\{u(t) \mbox{ : } t\in[0,\tau)\}$  a \emph{maximal} local pathwise 
mild solution of (\ref{cp1}) if for any other local pathwise mild solution 
$\{\widetilde{u}(t): t\in[0,\widetilde{\tau})\}$ satisfying $\widetilde{\tau}(t)\geq 
\tau$ a.s. and $\widetilde{u}|_{[0,\tau)}$ is equivalent to $u$, one has 
$\widetilde{\tau}=\tau$ a.s. If $\{u(t)\mbox{ : } t\in[0,\tau)\}$ is a maximal 
local pathwise mild solution for (\ref{cp1}), then the stopping time $\tau$ is 
called its lifetime. If $\mathbb{P}(\tau=\infty)=1$ then $\{u(t) \mbox{ : } 
t\in[0,\tau)\}$ is a \emph{global} pathwise mild solution for (\ref{cp1}).
\end{definition}

\begin{remark}
We emphasize that by a maximal local pathwise mild solution of (\ref{multiplicative}) 
we understand a triple $(u,(\tau_{n})_{n\geq 1}, \tau_{\infty} )$ such that each 
pair $(u,\tau_{n})$ is a local pathwise mild solution, $(\tau_{n})_{n\geq 1}$ is an 
increasing sequence of stopping times such that $\tau_{\infty}:=\lim
\limits_{n\uparrow\infty} \tau_{n}$ a.s. and
$$\lim\limits_{t\uparrow\tau_{\infty}}\sup\limits_{s\in[0,t]} 
|| u(s)||_{Z}=\infty,\mbox{ a.s. on the set } \{\omega :\tau_{\infty}(\omega)<\infty\},$$
see also \cite[Proposition 3.11]{BrzezniakHausenblasRazafimandimby}. For a global 
solution it holds that $\tau_{\infty}=\infty$ a.s., which means that for every 
$T>0$ the quantity $$\sup\limits_{t\in[0,T]}|| u(t)||_{Z}$$ is almost surely finite 
on the set $\{\omega: \tau_{\infty}(\omega)=\infty\}$. Furthermore, note that if 
uniqueness of local solutions holds true, then the same remains valid for maximal 
local solutions~\cite[Section 3]{BrzezniakHausenblasRazafimandimby}.
\end{remark}

For more details regarding local and maximal local mild solutions for SPDEs 
consult \cite{ZhuBrzezniak}, \cite[Section 3]{BrzezniakHausenblasRazafimandimby}, 
\cite{Kim1} and the references specified therein.\medskip

In order to ensure the well-posedness of (\ref{cp1}) we further state suitable 
assumptions for the generators.

\begin{remark}
Note that since the operator $A$ depends on the solution itself it can degenerate 
at some point already in the deterministic setting. A simple example is the change 
from a forward to a backward heat equation. Of course, the same issue arises for 
general multi-component degenerate quasilinear parabolic problems~\cite{Amann3}.
The conditions we consider below will exclude this situation in a certain time-horizon. For more information on quasilinear degenerate SPDEs consult \cite{DebusscheHofmanovaVovelle}.
\end{remark}

\begin{definition}
Denote by $\mathcal{U}_{T}$ the set of all stochastic processes $(u(t))_{t\in[0,T]}$ satisfying the following conditions: 
\begin{itemize}
 \item $u(t)$ is $(\mathcal{F}_{t})_{t\in[0,T]}$-adapted;
 \item $u(t)$ has a.s.~continuous trajectories in $X$;
 \item $u(t)$ has a.s.~bounded trajectories in $Z$ and satisfies $||u(t) -u_{0}||_{Z} < R$~a.s.~for 
$t\in[0,T]$.
\end{itemize}
\end{definition}
Recall that $R>0$ (deterministic) was introduced at the beginning of this section. In the following sequel, $\mathcal{B}([0,T];Z)$ stands for the Banach space of all bounded $Z$-valued functions on $[0,T]$.\\

Similar to \cite{Yagi1} and \cite{Yagi5} one can make certain structural assumptions which ensure that $A(u)$ exists for $u\in \mathcal{U}_{T}$ and generates an evolution system $U^{u}$. However, since all these assertions will depend on $u$ we localize them as follows.\\

We introduce the sequence of stopping times $(\tau_{n})_{n\geq 1}$ as 
\be
\label{eq:stoptime}
\tau_{n}:=\inf \{t\geq 0: ||u(t)||_{Z} \geq n\}
\ee
and impose several main assumptions (A1')-(A3') that ensure the 
existence of a parabolic evolution operator $U^{u}(\cdot,\cdot)$ together with a 
local Lipschitz continuity of the generators $A(\cdot{})$: 

\begin{itemize}
	\item [(A1')] For $u\in \mathcal{U}_{T \wedge\tau_{n}}$, $A(u)$ is a sectorial 
		operator of angle $0<\varphi<\frac{\pi}{2}$, namely
		$$\sigma(A(u))\subset \Sigma_{\varphi}:=\{ \lambda\in\mathbb{C}: 
		|\mbox{arg} \lambda|<\varphi \}, \mbox{ for } u\in \mathcal{U}_{T\wedge\tau_{n}} .  $$
	\item [(A2')]  For $u\in \mathcal{U}_{T\wedge\tau_{n}}$ the resolvent operator 
		$(\lambda \mbox{Id} -A(u))^{-1}$ satisfies the Hille-Yosida estimate, i.e., there exists 
		$M\geq 1$ such that
		$$||(\lambda \Id -A(u))^{-1}||_{\mathcal{L}(X)} \leq \frac{M} {|\lambda|+1},\mbox{ 
		for } \lambda\notin\Sigma_{\varphi}, \mbox{ for } u\in \mathcal{U}_{T\wedge\tau_{n}}.$$
	\item [(A3')] Let $0<\nu\leq 1$ be fixed. Then
  \begin{equation}\label{nu}
	||A^{\nu}(u) (A(u)^{-1}- A(v)^{-1}) ||_{\mathcal{L}(X)}\leq 
	n || u-v||_{Y}, \mbox{ for } u,v\in \mathcal{U}_{T\wedge\tau_{n}}.
	\end{equation}
 Furthermore, recall that $Y$ is a third Hilbert space such that $Z\hookrightarrow Y \hookrightarrow X$.

\end{itemize}

Assumptions (A1') and (A2') imply according to Theorem \ref{po} that $A(u)$ generates 
an evolution system $U^{u}$ for all $u\in \mathcal{U}_{T\wedge\tau_{n}}$.\\

Moreover, regarding (\ref{est:u}) we infer that
\begin{align*}
||U^{u}(t,s)||_{\mathcal{L}(X)} \leq \widetilde{C}(\delta) e ^{n^{1/\delta}(t-s)}, ~~\mbox{for } u\in \mathcal{U}_{T\wedge\tau_{n}},
\end{align*}
 so
 \begin{align}\label{est:u:stopped}
 \sup\limits_{0\leq s \leq t \leq T\wedge\tau_{n} } ||U^{u}(t,s)||_{\mathcal{L}(X)} \leq C (\delta) e^{Tn^{1/\delta}}, ~~\mbox{for } u\in \mathcal{U}_{T\wedge\tau_{n}}.
 \end{align}
\begin{itemize}
	\item [(A4')] We require that there exist $\alpha,\beta$ with 
	$0\leq \alpha<\beta<\nu\leq 1$, $\beta+\nu>1+\alpha$ and $\beta\geq 1/2$, such that 
	$D(A(u)^{\alpha})=:Y$ and $D(A(u)^{\beta})=:Z$; see \cite[Chapter 5, 
	Section 1.1]{Yagi1}. 
\end{itemize}
	
	\begin{remark}
		\begin{itemize}
			\item 	We emphasize that the domains $D(A(u))$ are in general allowed to depend on $u$
			as discussed in~\cite{Amann2}, \cite[Chapter 5]{Yagi1} and \cite[Section 3]{Yagi5}. We assume here for 
			the case of brevity only constant domains (i.e. $\nu=1$ in (A3')), but the techniques applied in this framework 
			can be extended to non-constant domains assuming for instance $D(A(v)) \subset D (A(u)^{\nu})$ for $u,v\in\cU_{T\wedge\tau_{n}}$ and $D(A(u)^{\beta})
			\hookrightarrow Z$, $D(A(u)^{\alpha})\hookrightarrow Y$. 
			\item Note that one can take $\alpha=0$ which means that $Y=X$, see \cite[Chapter 5]{Yagi}.
			\item Instead of (\ref{nu}) one can impose a global Lipschitz continuity of the generators, see \cite{Hornung1} and thereafter introduce a cut-off function.
		\end{itemize}
	
	\end{remark}
	
The assumption (A4') is meaningful from the point of 
view of the applications we want to consider as justified in Section \ref{app}.
Since we are in the parabolic setting, we specify that we can identify the 
domains of the fractional powers of these generators with Sobolev spaces, namely
\benn
X_{\widehat\mu} :=D(A(u)^{\widehat\mu}) =H^{2\widehat\mu}(G)\quad  \text{or}\quad 
D(A(u)^{\widehat\mu})=H^{2\widehat\mu}_{N}(G)\quad \text{or}\quad D(A(u)^{\widehat\mu})= H^{2\widehat\mu}_{D}(G),
\eenn
depending on the range of $\widehat\mu\geq 0$ and on the boundary conditions of $A$, see 
Section \ref{app}. Here $D$ and $N$ stand for Dirichlet respectively Neumann 
boundary conditions and $G\subset \mathbb{R}^{n}$ is an open bounded $\cC^{2}$-domain; 
Section \ref{app} provides concrete examples for this setting. In the following we 
let $||\cdot{}||_{\widehat\mu}$ denote the norm in $X_{\widehat\mu}$.\medskip
 
 We introduce a time-horizon $\widetilde{T}$ such that $0<\widetilde{T}\leq T$ which will be chosen small enough 
 as required in the fixed-point argument presented below and specify the set of processes we consider. Note that the following definition is meaningful since all terms involved in our computation exist pathwise.
 \begin{definition}\label{def:p}
 	  We define for $1-\nu<\delta<\beta-\alpha \wedge\gamma$ and an arbitrary positive 
 	  deterministic constant $k$ the set $\mathcal{K}$ of all $(\mathcal{F}_{t})_{t\in[0,\widetilde{T}\wedge\tau_{n}]}$-adapted stochastic processes $u:[0,\widetilde{T}\wedge\tau_{n}]\times\Omega\to Y$ such that
 	  \begin{itemize}
 	  	\item [1)] $u(0)=u_{0}$ a.s.
 	  	\item [2)] $u\in\mathcal{B}([0,\widetilde{T}\wedge\tau_{n}];Z)$ with
 	  	$
 	  	\sup\limits_{0\leq t \leq \widetilde{T}\wedge\tau_{n}} ||u(t) - u_{0}||_{Z}\leq r \mbox{ a.s.}
 	  	$
 	  	\item [3)] $u\in \cC^{\delta}([0,\widetilde{T}\wedge\tau_{n}];Y)$ with 
 	  	$
 	  	\sup\limits_{0\leq t \leq\widetilde{T}\wedge\tau_{n}} \frac{||u(t)-u(s)||_{Y}}{(t-s)^{\delta}} \leq k \mbox{ a.s.}
 	  	$
 	  	
 	  \end{itemize}

 \end{definition}


Here we let $0<r<R$, where the deterministic constant $R$ introduced above describes the radius of an arbitrary open ball in $Z$ and $\gamma$ denotes the H\"older exponent of $J(\sigma)$, recall Proposition \ref{hoelderint}. 

\begin{remark}
	\begin{itemize}
		\item For computational simplicity, it is enough to consider $\cC([0,\widetilde{T}\wedge\tau_{n}];Y)$ instead of $\cC^{\delta}([0,\widetilde{T}\wedge\tau_{n}];Y)$, i.e.~one shows that the trajectories of $u$ are a.s.~bounded in $Z$ and a.s.~continuous in $Y$. For optimal space-time regularity results we work with $\mathcal{K}$ as defined above. 
		\item Note that $\mathcal{K}$ is a closed subset of $\cC([0,\widetilde{T}\wedge\tau_{n}];Y)$.
	\end{itemize}
	
\end{remark}

The first key result we prove is: 
\begin{theorem}
\label{sol} 
Let $f\in \cC^{\delta}([0,T];X)$, $\sigma\in L^{0}(\Omega; \cC([0,T];\mathcal{L}_{2}(H,X_{2\beta})))$ and (A1')-(A4') be satisfied.
 Then the stochastic evolution equation (\ref{cp1}) has 
a unique local pathwise mild solution $u\in L^{0}(\Omega; 
\cB([0,\widetilde{T}\wedge\tau_{n}];Z))\cap L^{0}
(\Omega;\cC^{\delta}([0,\widetilde{T}\wedge\tau_{n}];Y))$.
\end{theorem}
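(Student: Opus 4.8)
The plan is to obtain $u$ as the unique fixed point of the map $\Lambda$ which sends $u\in\mathcal{K}$ (the complete metric space of Definition~\ref{def:p}, a closed subset of $\cC([0,\widetilde{T}\wedge\tau_{n}];Y)$) to the process whose value at $t$ is the right-hand side of~\eqref{lsg}. First I would verify that $\Lambda$ is well defined and maps into $(\mathcal{F}_{t})_{t\in[0,\widetilde{T}\wedge\tau_{n}]}$-adapted processes with $\Lambda(u)(0)=u_{0}$; the adaptedness is the cheap part, and is exactly what the pathwise mild formulation buys: by Proposition~\ref{meas}, $U^{u}(t,s)$ is only $\mathcal{F}_{t}$-measurable for $s\leq t$, but $A(u(s))$ is $\mathcal{F}_{s}$-measurable and both $J(\sigma)(t)=\int_{0}^{t}\sigma\,\txtd W$ and $s\mapsto\int_{s}^{t}\sigma\,\txtd W$ are $\mathcal{F}_{t}$-measurable, so every summand of~\eqref{lsg} is $\mathcal{F}_{t}$-measurable. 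Since all $u\in\mathcal{K}$ live on $[0,\widetilde{T}\wedge\tau_{n}]$, the bound~\eqref{est:u:stopped} and the smoothing estimates of Theorem~\ref{estan} hold with constants that are deterministic in $n$, and Propositions~\ref{reg}--\ref{hoelderint}, applied with the target space $X_{2\beta}$ in place of $Z$, give $J(\sigma)\in\cC^{\gamma}([0,T];X_{2\beta})$ a.s.; these two facts are the inputs for everything below.

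Step~1: self-mapping, $\Lambda(\mathcal{K})\subseteq\mathcal{K}$. I would estimate the four summands of~\eqref{lsg} in the two norms defining $\mathcal{K}$. For $U^{u}(t,0)u_{0}$ one uses~\eqref{an} to get $\|A^{\beta}(u(t))U^{u}(t,0)u_{0}\|_{X}\leq C\|u_{0}\|_{Z}$, strong continuity of $U^{u}$ on $Z$ to control $U^{u}(t,0)u_{0}-u_{0}$, and the identity $U^{u}(t,0)-U^{u}(s,0)=\int_{s}^{t}A(u(r))U^{u}(r,0)\,\txtd r$ together with $\|A^{1+\alpha}(u(r))U^{u}(r,0)A^{-\beta}(u(r))\|\leq Cr^{\beta-1-\alpha}$ and $\beta-\alpha>\delta$ for the $\cC^{\delta}$-in-$Y$ seminorm. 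The deterministic forcing $\int_{0}^{t}U^{u}(t,s)f(s)\,\txtd s$ is treated as in the deterministic theory~\cite{Yagi1,Yagi}: writing $f(s)=f(t)+(f(s)-f(t))$ and combining $\|A^{\beta}(u(t))U^{u}(t,s)\|\leq C(t-s)^{-\beta}$ with $f\in\cC^{\delta}([0,T];X)$, $\delta<\beta$, yields a $Z$-bound of order $\widetilde{T}^{1-\beta}+\widetilde{T}^{1+\delta-\beta}$ plus the required Hölder-in-$Y$ bound. For the stochastic convolution $U^{u}(t,0)J(\sigma)(t)$ one uses~\eqref{an2} and $J(\sigma)(t)\in X_{2\beta}$ to get $\|A^{\beta}(u(t))U^{u}(t,0)J(\sigma)(t)\|_{X}\leq C\|J(\sigma)(t)\|_{X_{2\beta}}$, while the Hölder-in-$Y$ norm is bounded by the $\beta-\alpha$ estimate applied to $J(\sigma)(t)$ together with $J(\sigma)\in\cC^{\gamma}$ and $\delta<\gamma$. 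For the double integral one writes $\int_{s}^{t}\sigma\,\txtd W=J(\sigma)(t)-J(\sigma)(s)$; since $2\beta\geq1$ this gives $\|A(u(s))\int_{s}^{t}\sigma\,\txtd W\|_{X}\leq C\|J(\sigma)(t)-J(\sigma)(s)\|_{X_{2\beta}}\leq C(t-s)^{\gamma}$, hence $\|A^{\beta}(u(t))U^{u}(t,s)A(u(s))\int_{s}^{t}\sigma\,\txtd W\|_{X}\leq C(t-s)^{\gamma-\beta}$, which is integrable in $s$ and produces a $Z$-bound of order $\widetilde{T}^{1+\gamma-\beta}$. Choosing $\widetilde{T}$ small enough (and shrinking $r,k$ if needed) gives $\Lambda(\mathcal{K})\subseteq\mathcal{K}$.

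Step~2: contraction. For $u,v\in\mathcal{K}$, $\Lambda(u)-\Lambda(v)$ splits into terms where only the argument of $A$ or of an integrand changes — controlled through (A3') by $A(u)-A(v)=-A(u)(A(u)^{-1}-A(v)^{-1})A(v)$ and $\|A^{\nu}(u(r))(A(u(r))^{-1}-A(v(r))^{-1})\|\leq n\|u(r)-v(r)\|_{Y}$ — and terms containing $U^{u}(t,s)-U^{v}(t,s)$, handled via the perturbation formula $U^{u}(t,s)-U^{v}(t,s)=\int_{s}^{t}U^{u}(t,r)[A(u(r))-A(v(r))]U^{v}(r,s)\,\txtd r$ and the same rewriting of $A(u)-A(v)$; this is precisely the deterministic machinery of~\cite{Yagi1,Yagi5}, with the two stochastic summands estimated as in Step~1 after replacing $\int_{s}^{t}\sigma\,\txtd W$ by the Hölder increment of $J(\sigma)$. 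After the bookkeeping, all the resulting bounds carry a positive power of $\widetilde{T}$, so $\Lambda$ is a contraction on $\mathcal{K}$ for the $\cC([0,\widetilde{T}\wedge\tau_{n}];Y)$-distance provided $\widetilde{T}$ is small. Banach's fixed-point theorem then gives a unique $u\in\mathcal{K}$ with $u=\Lambda(u)$, and by Step~1 this $u$ in fact belongs to $L^{0}(\Omega;\cB([0,\widetilde{T}\wedge\tau_{n}];Z))\cap L^{0}(\Omega;\cC^{\delta}([0,\widetilde{T}\wedge\tau_{n}];Y))$. Uniqueness among all local pathwise mild solutions — not merely within $\mathcal{K}$ — follows by applying the contraction estimate to the difference of two solutions on a common small interval and a standard continuation/Gronwall argument.

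The hard part will be Step~1, and inside it the $\cC^{\delta}([0,\widetilde{T}\wedge\tau_{n}];Y)$-regularity of the double stochastic-convolution term $\int_{0}^{t}U^{u}(t,s)A(u(s))\int_{s}^{t}\sigma(r)\,\txtd W(r)\,\txtd s$: the kernel $A(u(s))U^{u}(t,s)$ carries the full $(t-s)^{-1}$ singularity, the upper limit of the inner integral depends on $t$ so the expression is not a convolution, and the only available regularity of the inner integral is the pathwise $\gamma$-Hölder continuity of $J(\sigma)$ in $X_{2\beta}$; closing the increment-in-$t$ estimate is exactly what forces the constraints $\alpha<\beta<\nu$, $\beta\geq\tfrac{1}{2}$ and $1-\nu<\delta<(\beta-\alpha)\wedge\gamma$, and it is here that the pathwise mild formulation~\eqref{lsg} — in place of a naive It\^{o} convolution, which is not even well defined by Proposition~\ref{meas} — is indispensable. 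A secondary, bookkeeping-type difficulty absent in the deterministic setting is that all the evolution-family constants are random; this is why the stopping times $\tau_{n}$ from~\eqref{eq:stoptime} and the localization of the remark following Theorem~\ref{po} are built into $\mathcal{K}$ from the outset.
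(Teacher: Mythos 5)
Your overall architecture is the same as the paper's: the map you call $\Lambda$ is exactly the paper's $\Phi$ (freeze the coefficient at the input process, take the pathwise mild solution of the resulting linear problem with random non-autonomous generator), the self-mapping step rests on the pathwise H\"older regularity of $J(\sigma)$ combined with the Acquistapace--Terreni smoothing estimates, and the contraction step rests on the perturbation formula for the difference of two evolution families together with (A3'), exactly as in Lemmas \ref{sa}--\ref{stochconv}. Your identification of the double stochastic convolution as the critical term, and of the exponent constraints it forces, also matches the paper.

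There is, however, one genuine gap in your Step 2: you assert that $\Lambda$ is a contraction for the \emph{pathwise} $\cC([0,\widetilde{T}\wedge\tau_{n}];Y)$-distance for a deterministic small $\widetilde{T}$. The Lipschitz constants of the two stochastic summands are proportional to $\|J(\sigma)\|_{\cC^{\gamma}([0,\widetilde{T}\wedge\tau_{n}];Z)}(\omega)$ (respectively its $X_{2\beta}$-valued analogue), and this quantity is a.s.\ finite but not bounded uniformly in $\omega$; the stopping times $\tau_{n}$ localize $\|u\|_{Z}$ and hence the evolution-family constants, but they do not localize the H\"older norm of the stochastic integral. Consequently no deterministic choice of $\widetilde{T}$ makes $\Lambda$ a strict contraction for almost every $\omega$ simultaneously, and choosing an $\omega$-dependent $\widetilde{T}(\omega)$ would reintroduce measurability and adaptedness problems. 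This is precisely why the paper carries out the contraction with respect to the $L^{2}(\Omega;\cC([0,\widetilde{T}\wedge\tau_{n}];Y))$ norm: Proposition \ref{hoelderint} and the Burkholder--Davis--Gundy inequality replace the random factor $\|J(\sigma)\|_{\cC^{\gamma}}$ by the deterministic quantity $C_{\widetilde{T}}\,\|\sigma\|_{\cC([0,\widetilde{T}];\mathcal{L}_{2}(H,Z))}$ with $C_{\widetilde{T}}\to 0$ as $\widetilde{T}\searrow 0$, after which a single deterministic $\widetilde{T}$ works. Your Step 1 and the algebraic structure of Step 2 survive unchanged once the metric is replaced; so either adopt the $L^{2}(\Omega;\cdot)$ metric as the paper does, or introduce an additional localization of $\|J(\sigma)\|_{\cC^{\gamma}}$ by stopping times before running the pathwise argument.
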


\begin{remark}
	\begin{itemize}
	\item [1)] The assumption $\sigma(\cdot{})\in \mathcal{L}_{2}(H,X_{2\beta})$ will be 
	required in order to prove the contraction property and is justified in 
	Lemma~\ref{stochconv}. For suitable estimates of the generalized stochastic 
	convolution given in (\ref{definition:solution}), one only needs 
	$\sigma(\cdot{})\in \mathcal{L}_{2}(H,Z)$ as indicated in the following computations. 
	For similar regularity conditions and further applications we
	refer the reader to \cite[Section 2]{ChuesovScheutzow}.
	\item [2)] Note that the representation formula (\ref{pms}) holds true under this additional space-regularity assumption on $\sigma$.
	\end{itemize}
\end{remark}

We proceed to the proof of Theorem \ref{sol}. To this aim, let $v\in \mathcal{K}$ a.s.
and let $A_{v}(t)$ denote the family of sectorial operators $A_{v}(t):=A(v(t,\omega))$. 
For notational simplicity, the $\omega$-dependence will be dropped
but it has to be kept in mind. We first consider the evolution equation

\begin{equation}
\label{v}
\begin{cases}
~\txtd u(t)=\left[A_{v}(t)u(t) + f(t) \right] ~\txtd t + \sigma(t) ~\txtd W(t), 
~ t\in[0,\widetilde{T}]\\
u(0)=u_{0}\in K \mbox{ a.s.}
\end{cases}
\end{equation}
Note that (\ref{v}) represents a linear parabolic stochastic Cauchy problem, 
with time-dependent, random drift. 
Using \cite[Theorem 5.3]{PronkVeraar} we infer that (\ref{v}) has a pathwise 
mild solution given by
\begin{align}
u(t) & = U^{v}(t,0)u_{0} + U^{v}(t,0)\int\limits_{0}^{t}\sigma(s)~\txtd W(s) 
+ \int\limits_{0}^{t}U^{v}(t,s)f(s) ~\txtd s\nonumber \\
& - \int\limits_{0}^{t}U^{v}(t,s)A_{v}(s) \int\limits_{s}^{t}
\sigma(r) ~\txtd W(r)~\txtd s~ \mbox{   a.s.},\label{eq:magain}
\end{align}
where $U^{v}(t,s)$ is the random parabolic evolution operator generated 
by $A_{v}$. 

\begin{remark}
Note that using \cite[Theorems 3.4-3.5]{PronkVeraar}, the previous convolutions 
can be defined in a pointwise sense and are pathwise continuous, which means that 
the solution formula~\eqref{eq:magain} holds for almost all $\omega\in\Omega$ and 
for all $t\in[0,\widetilde{T}]$; we also refer to the proof of 
Lemma~\ref{sa} below.	
\end{remark}


We define the mapping 
$$\Phi(v):=u , \mbox{ for } v\in \mathcal{K}$$ 
and are going to prove that $\Phi$ maps $\mathcal{K}$ into itself and that it 
is a contraction with respect to the norm in $L^{2}(\Omega; \cC([0,\widetilde{T}\wedge\tau_{n}];Y))$ if one chooses $\widetilde{T}$ small enough. We split the proof 
into several steps.\\

Throughout this section we frequently use 
the estimates for analytic $C_{0}$-semigroups and parabolic evolution operators as stated 
in Theorem \ref{estan}. Namely, according to (\ref{an}), (\ref{an2}) there exists a constant $C:\Omega\to\mathbb{R}_{+}$ such that for $0\leq s<t \leq \widetilde{T}\wedge\tau_{n}$ and $\widetilde{\alpha},\widetilde{\beta}\in(0,1]$ we have
\begin{equation}
||A^{\widetilde\beta}_{v}(t) U^{v}(t,s) ||_{\mathcal{L}(X)} \leq C (t-s)^{-\widetilde\beta},\quad  
 ||A^{\widetilde\beta}_{v}(t) U^{v}(t,s)A^{-\widetilde\alpha}_{v}(s) ||_{\mathcal{L}(X)}
\leq C (t-s)^{\widetilde\alpha-\widetilde\beta}.
\end{equation}

 Furthermore, one can estimate the difference between $U^{v}(t,s)$ and $e^{-(t-s)A_{v}(t)}$ or $e^{-(t-s)A_{v}(s)}$ for $0\leq s\leq t \leq \widetilde{T}\wedge\tau_{n}$ as follows from \cite[Section 3]{Yagi5}:
\begin{align}
||A^{\widetilde{\theta}}_{v}(t)U^{v}(t,s) A ^{-\widetilde{\theta}}_{v}(s) - e ^{(t-s) A_{v}(s)} ||_{\cL(X)} \leq C (t-s) ^{\delta +\nu -1}, ~~ 0\leq \widetilde{\theta}\leq 1 \label{id1}\\
 || A^{\widetilde{\theta}}_{v}(t) (U^{v}(t,s) - e^{-(t-s)A_{v}(t)}) A^{-\widetilde{\rho}}_{v}(s) ||_{\cL(X)} \leq C (t-s) ^{\widetilde{\rho} -\widetilde{\theta}+\delta+\nu-1 }, ~~ 0 \leq \widetilde{\theta}, \widetilde{\rho}\leq 1 \label{id2}.
\end{align}
Recall that $\delta$ stands for the H\"older exponent in the Acquistapace-Terreni condition (\ref{at}). 
\begin{remark}
	Note that $C$ depends on the stopping times, more precisely, according to (\ref{est:u:stopped}) one has $C\leq \widetilde{C}(\delta) e^{\widetilde{T}n ^{1/\delta}}$. For notational simplicity we drop this dependence, but it should be kept in mind. 
\end{remark}

Regarding all these we proceed towards our fixed-point argument.

\begin{lemma}
\label{sa}
If $\widetilde{T}$ is sufficiently small, then $\Phi$ maps 
$\mathcal{K}$ into itself.
\end{lemma}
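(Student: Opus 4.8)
The plan is to show that $\Phi(v)=u$ defined by~\eqref{eq:magain} lies in $\mathcal{K}$ whenever $v\in\mathcal{K}$, by verifying the three defining conditions of Definition~\ref{def:p} in turn: the initial condition at $t=0$, the bound $\sup_t\|u(t)-u_0\|_Z\le r$, and the $\delta$-H\"older bound $\sup\|u(t)-u(s)\|_Y/(t-s)^\delta\le k$. Adaptedness is inherited from the pathwise construction of~\cite{PronkVeraar}, so the content is the two quantitative estimates. Throughout I would decompose $u(t)=\sum_{i=1}^{4}I_i(t)$ according to the four terms in~\eqref{eq:magain} and estimate each in the $Z=X_\beta$- and $Y=X_\alpha$-norms separately, using Theorem~\ref{estan} together with the refined comparison estimates~\eqref{id1}--\eqref{id2} between $U^v$ and the frozen semigroup, and the stochastic-convolution regularity from Proposition~\ref{hoelderint} (with $\sigma$ taking values in $X_{2\beta}$, so that $J(\sigma)$ and also $A_v^{\beta}J(\sigma)$ are pathwise H\"older-continuous).

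For the bound in condition 2), I would write $u(t)-u_0 = (U^v(t,0)-\mathrm{Id})u_0 + U^v(t,0)J(\sigma)(t) + \int_0^t U^v(t,s)f(s)\,ds - \int_0^t U^v(t,s)A_v(s)\big(J(\sigma)(t)-J(\sigma)(s)\big)\,ds$, and measure everything in $X_\beta$. The first term is $O(t^{\beta-\alpha})$ or more precisely $O(t^{\delta})$ times $\|u_0\|_Z$-type data by the standard estimate $\|(U^v(t,0)-\mathrm{Id})A_v^{-\theta}\|\lesssim t^{\theta}$; the deterministic drift term is handled by $\|A_v^\beta U^v(t,s)A_v^{-\delta}\|\lesssim (t-s)^{\delta-\beta}$ against $f\in\cC^\delta([0,T];X)$, giving an $O(t^{1+\delta-\beta})$ bound; the stochastic-convolution term $U^v(t,0)J(\sigma)(t)$ is controlled because $\sigma$ is $X_{2\beta}$-valued, so $A_v^\beta U^v(t,0)J(\sigma)(t)$ is estimated by $\|U^v(t,0)A_v^\beta\|$-type bounds composed with the $X_{2\beta}$-regularity of $J(\sigma)$, which is $O(t^\gamma)$ small; the last term is the most delicate — one uses $\|A_v^\beta U^v(t,s)A_v(s)\|$ is too singular to integrate directly, so one instead distributes fractional powers as $A_v^\beta U^v(t,s)A_v(s)\big(J(\sigma)(t)-J(\sigma)(s)\big)$ and exploits that $J(\sigma)(t)-J(\sigma)(s)\in X_{2\beta}$ with H\"older modulus $O((t-s)^\gamma)$ in the $X_{2\beta}$-norm, reducing the singularity to $(t-s)^{-1+\beta+\gamma}$ which is integrable since $\beta+\gamma>1$ can be arranged (or one splits via~\eqref{id2} to gain the extra $\delta+\nu-1>0$). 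In all four cases the resulting constants carry an explicit factor tending to $0$ as $\widetilde{T}\searrow0$ (from the positive powers of $\widetilde{T}$, or from the $C_T$ in Proposition~\ref{hoelderint}), so choosing $\widetilde{T}$ small forces the total below $r$.

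For condition 3), the $\cC^\delta([0,\widetilde{T}\wedge\tau_n];Y)$-bound, I would estimate $u(t)-u(s)$ in $X_\alpha$ again term by term, for $0\le s<t\le\widetilde{T}\wedge\tau_n$. The deterministic evolution-family piece $U^v(t,0)u_0 - U^v(s,0)u_0 = (U^v(t,s)-\mathrm{Id})U^v(s,0)u_0$ gives $(t-s)^\delta$ via the smoothing of $U^v$ on $u_0\in Z=X_\beta$ together with $\|(U^v(t,s)-\mathrm{Id})A_v^{-(\beta-\alpha)}\|\lesssim(t-s)^{\beta-\alpha}$ and $\delta<\beta-\alpha$; the drift convolution's increment splits into $\int_s^t U^v(t,r)f(r)\,dr$ (bounded by $(t-s)^{1-(\alpha-\text{something})}$, clearly $O((t-s)^\delta)$) plus $\int_0^s (U^v(t,r)-U^v(s,r))f(r)\,dr$, handled by the identity $U^v(t,r)-U^v(s,r)=(U^v(t,s)-\mathrm{Id})U^v(s,r)$ and the H\"older continuity of $f$; the two stochastic terms use the $X_{2\beta}$-valued H\"older regularity of $J(\sigma)$ (exponent $\gamma>\delta$) in exactly the same split-into-"new part"/"old part" fashion, with~\eqref{id1}--\eqref{id2} absorbing the borderline singular kernels. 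Each contribution again comes with a positive power of $\widetilde{T}$, so the sum is $\le k$ once $\widetilde{T}$ is small. The main obstacle I anticipate is the fourth term $\int_0^t U^v(t,s)A_v(s)(J(\sigma)(t)-J(\sigma)(s))\,ds$: naively it involves the non-integrable kernel $(t-s)^{-1}$ from $\|A_v(s)U^v(t,s)\|$, and making it work requires carefully trading the full power of $A_v(s)$ against the $X_{2\beta}$-smoothness of the stochastic-convolution increment and against the gain $\delta+\nu-1>0$ in~\eqref{id2}, while simultaneously keeping track that all constants $C$ depend on $n$ only through $e^{\widetilde{T}n^{1/\delta}}$ so that the smallness in $\widetilde{T}$ is not destroyed — this bookkeeping, rather than any single inequality, is the crux.
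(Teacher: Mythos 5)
Your proposal follows essentially the same route as the paper's proof: the same four-term decomposition of the pathwise mild solution, the same use of Theorem~\ref{estan} and the comparison estimates \eqref{id1}--\eqref{id2}, and the same key mechanism of pairing the singular kernel $\|A^{\beta}_{v}(t)U^{v}(t,s)A_{v}(s)A^{-\beta}_{v}(s)\|_{\mathcal{L}(X)}\lesssim (t-s)^{-1}$ with the pathwise $\gamma$-H\"older modulus of $J(\sigma)$ in $Z$ --- note that this only requires $\gamma>0$ (not $\beta+\gamma>1$ as you state), and at this stage only $\sigma\in\mathcal{L}_{2}(H,Z)$ rather than $X_{2\beta}$-valued noise is needed, the stronger assumption being reserved for the contraction step. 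The paper's only structural difference is that for the H\"older bound it writes $u(t)-u(s)=(U^{v}(t,s)-\mathrm{Id})u(s)+\cdots$ and expands the first term via \eqref{diffh}, rather than differencing term by term as you do, which is an equivalent bookkeeping.
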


\begin{proof}
The regularity results stated in Proposition \ref{reg} are the key ingredients, 
which are required in order to estimate pathwise the terms containing stochastic 
integrals. Keeping in mind that $Z=D(A^{\beta}_{v}(t))$ for 
$t\in[0,\widetilde{T}\wedge\tau_{n}]$ and $v\in\mathcal{K}$ a.s., the first of these 
estimates entails 
\begin{align*}
	&	\sup\limits_{0\leq t \leq \widetilde{T} \wedge\tau_{n} } || U^{v}(t,0)
	\int\limits_{0}^{t}\sigma(s) ~\txtd W(s)||_{Z}=\sup\limits_{0\leq t \leq 
	\widetilde{T} \wedge\tau_{n} } || A^{\beta}_{v}(t)U^{v}(t,0) 
	\int\limits_{0}^{t}\sigma(s) ~\txtd W(s)||_{X}\\
	&= \sup\limits_{0\leq t \leq \widetilde{T} \wedge\tau_{n} } 
	|| A^{\beta}_{v}(t) U^{v}(t,0) A^{-\beta}_{v}(0) A^{\beta}_{v}(0) 
	\int\limits_{0}^{t}\sigma(s) ~\txtd W(s)||_{X} \\
	& \leq C \sup\limits_{0\leq t \leq \widetilde{T}\wedge\tau_{n}}
	||A^{\beta}_{v}(t) U^{v}(t,0) A^{-\beta}_{v}(0) ||_{\mathcal{L}(X)} 
	||A^{\beta}_{v}(0) \int\limits_{0}^{t} \sigma(s) ~\txtd W(s)  ||_{X}\\
	&  \leq C \sup\limits_{0\leq t \leq\widetilde{T}\wedge\tau_{n}} 
	||\int\limits_{0}^{t}\sigma(s) ~\txtd W(s) ||_{Z}
	\leq C ||J(\sigma) ||_{\cC([0,\widetilde{T}\wedge\tau_{n}];Z)}.
\end{align*}
Furthermore, due to Proposition \ref{reg}, $J(\sigma)\in \cC^{\gamma}
([0,\widetilde{T}\wedge\tau_{n}];Z)$ a.s. for $\gamma<1/2$. We obtain
	\begin{align*}
	& \sup\limits_{0\leq t \leq \widetilde{T}\wedge\tau_{n}} 
	||\int\limits_{0}^{t} U^{v}(t,s) A_{v}(s) \int\limits_{s}^{t}\sigma(r) ~\txtd W(r) 
	~\txtd s ||_{Z}
	\\
	&= \sup\limits_{0\leq t \leq \widetilde{T}\wedge\tau_{n}} 
	||\int\limits_{0}^{t} A^{\beta}_{v}(t) U^{v}(t,s) A_{v}(s) A^{-\beta}_{v}(s) 
	A^{\beta}_{v}(s) \int\limits_{s}^{t} \sigma(r) ~\txtd W(r) ~\txtd s||_{X} \\
	& \leq C  \sup\limits_{0\leq t \leq \widetilde{T} \wedge\tau_{n} }
	\int\limits_{0}^{t} || A^{\beta}_{v}(t)U^{v}(t,s)A_{v}(s)A^{-\beta}_{v}(s) 
	||_{\mathcal{L}(X)} ||A^{\beta}_{v}(s)\int\limits_{s}^{t} \sigma(r) ~\txtd W(r) 
	||_{X} ~\txtd s  \\
	&\leq C \sup\limits_{0\leq t \leq \widetilde{T}\wedge\tau_{n} } 
	\int\limits_{0}^{t} (t-s)^{-1} || \int\limits_{s}^{t} \sigma(r) ~\txtd W(r) 
	||_{Z} ~\txtd s\\
	& \leq C \sup\limits_{0\leq t \leq \widetilde{T}\wedge\tau_{n}} \int\limits_{0}^{t}
	(t-s)^{ -1} (t-s)^{\gamma} ||J(\sigma)||_{\cC^{\gamma}([0,\widetilde{T}
	\wedge\tau_{n}];Z)}  ~\txtd s
	\leq  C \widetilde{T}^{\gamma} ||J(\sigma) ||_{\cC^{\gamma}([0,
	\widetilde{T}\wedge\tau_{n}];Z) }.
	\end{align*}
We can also estimate the term involving the initial condition 
\begin{align*}
\sup\limits_{0\leq t \leq \widetilde{T}\wedge\tau_{n}} || U^{v}(t,0)u_{0}||_{Z}
&={ \color{black}{ 
\sup\limits_{0\leq t \leq \widetilde{T}
\wedge\tau_{n}} ||A^{\beta}_{v}(t) U^{v}(t,0) A^{-\beta}_{v}(0) A^{\beta}_{v}(0)
u_{0} ||_{X} }} \\
	&{\color{black}{\leq C \sup\limits_{0\leq t \leq \widetilde{T}\wedge\tau_{n} } 
	|| A^{\beta}_{v}(t) U^{v}(t,0) A^{-\beta}_{v}(0) ||_{\mathcal{L}(X)}
	||A^{\beta}_{v}(0)u_{0} ||_{X}}}\leq C ||u_{0} ||_{Z}.
	\end{align*}
We immediately obtain that 
\begin{align*}
	& \sup\limits_{0\leq t \leq \widetilde{T}\wedge\tau_{n}} 
	||\int\limits_{0}^{t} U^{v}(t,s)f(s) ~\txtd s ||_{Z}
	 \leq C \widetilde{T}^{1-\beta}||f ||_{\cC^{\delta}([0,\widetilde{T}
	\wedge\tau_{n}];X)}.
\end{align*}
Consequently, we may conclude based upon the previous estimates that
\begin{align*}
\sup\limits_{0\leq t\leq\widetilde{T}\wedge\tau_{n}} 
||u(t)||_{Z}&\leq C\left( ||u_{0} ||_{Z} +  \widetilde{T}^{\gamma}
||J(\sigma) ||_{\cC^{\gamma}([0,\widetilde{T}\wedge\tau_{n}];Z)} 
+ \widetilde{T} ^{1-\beta} || f||_{\cC^{\delta}([0,
\widetilde{T}\wedge\tau_{n}];X)}\right).
\end{align*}
Furthermore, one can derive regularity results for $u$ in 
appropriate function spaces. This can be shown using arguments 
from \cite[Proposition 5.1]{Yagi1} (deterministic estimates) and 
\cite[Theorem 4.4]{PronkVeraar} 
(pathwise Sobolev/H\"older regularity of the generalized stochastic convolution). 
For the convenience of the reader, we shortly indicate the main computation which justifies the H\"older-continuity of $u$ in $Y$.
 Let $0\leq s<t \leq 
\widetilde{T}\wedge\tau_{n}.$ Then building the difference of the solution at these two time points yields
\begin{align*}
u(t) - u(s) &= (U^{v} (t,s) - \mbox{Id} ) u(s) + U^{v}(t,0) \int\limits_{s}^{t} \sigma(r) ~\txtd W(r) + \int\limits_{s}^{t} U^{v}(t,\tau) f(\tau) ~\txtd \tau\\
& + \int\limits_{s}^{t} U ^{v}(t,\tau) A_{v} (\tau) \int\limits_{\tau}^{t} \sigma(r) ~\txtd W(r) ~\txtd \tau + \int\limits_{0}^{s} U^{v}(t,\tau) A_{v}(\tau) \int\limits_{s}^{t} \sigma(r)~\txtd W(r) ~\txtd \tau.
\end{align*} 

We write the first term as
\begin{align}\label{diffh}
(U^{v}(t,s)- \mbox{Id} ) u(s) &= \Big[ (U^{v}(t,s) - e ^{-(t-s)A_{v}(t)} ) A^{-\beta}_{v}(s) + (e^{-(t-s)A_{v}(t) } -\mbox{Id})(A^{-\beta}_{v}(s) - A^{-\beta}_{v}(t) ) \nonumber\\
&+ (e^{-(t-s)A_{v}(t)} -\mbox{Id}) A^{-\beta}_{v}(t)   \Big] A^{\beta}_{v}(s) u(s).
\end{align}

Recalling that $||\cdot||_{Y}=||A^{\alpha}_{v}(t) \cdot ||_{X}$ we start estimating all the terms above in the appropriate norm. Using (\ref{id2}) we have for the first term
\begin{align*}
||A^{\alpha}_{v}(t) (U^{v}(t,s) - e ^{-(t-s)A_{v}(t)} ) A^{-\beta}_{v}(s) ||_{\cL(X)} \leq C (t-s) ^{\beta-\alpha+\delta +\nu-1}.
\end{align*}
Recall that for $\delta$ introduced in (\ref{at}) we imposed that $1-\nu<\delta<\beta-\alpha\wedge\gamma$. \\

Similarly,
\begin{align*}
&||A^{\alpha}_{v}(t) (e^{-(t-s)A_{v}(t) } -\mbox{Id})(A^{-\beta}_{v}(s) - A^{-\beta}_{v}(t) )||_{\cL(X)}\\
& \leq ||(e^{-(t-s)A_{v}(t) } -\mbox{Id}) A^{\alpha-\alpha'}_{v}(t) ||_{\cL(X)} ||A^{\alpha'}_{v}(t) (A^{-\beta}_{v}(s) -A^{-\beta}_{v}(t )) ||_{\cL(X)} \leq C (t-s)^{\alpha'-\alpha+\delta}
\end{align*}
and
\begin{align*}
||A^{\alpha}_{v}(t)  (e^{-(t-s)A_{v}(t)} -\mbox{Id}) A^{-\beta}_{v}(t)||_{\cL(X)} \leq C (t-s)^{\beta-\alpha}.
\end{align*}
Here $\alpha'$ is an exponent satisfying $\alpha<\alpha'<\beta+\nu-1$. 

Obviously,
\begin{align*}
\left\| A^{\alpha}_{v}(t) \int\limits_{s}^{t} U^{v}(t,\tau) f(\tau) ~\txtd \tau \right\|_{X}\leq C (t-s)^{1-\alpha} ||f||_{\cC^{\delta}([0,\widetilde{T}\wedge\tau_{n}];X)}.
\end{align*}

Now, we turn to the terms of (\ref{diffh}) that contain stochastic integrals and verify their regularity.\\

First of all
\begin{align*}
\left\| A^{\alpha}_{v}(t) U^{v}(t,0) A^{-\beta}_{v}(0) A^{\beta}_{v}(0) \int\limits_{s}^{t} \sigma(r) ~\txtd W(r) \right\|_{X} & \leq C t ^{\beta-\alpha} ||J(\sigma)(t) - J(\sigma)(s)||_{Z}\\& \leq C t^{\beta-\alpha} (t-s)^{\gamma} ||J(\sigma)||_{\cC^{\gamma}([0,\widetilde{T}\wedge\tau_{n}];Z)}.
\end{align*}
Furthermore
\begin{align*}
&\left\| A^{\alpha}_{v}(t)\int\limits_{s}^{t} U ^{v}(t,\tau) A_{v} (\tau) \int\limits_{\tau}^{t} \sigma(r) ~\txtd W(r) ~\txtd \tau \right\|_{X}\\
& = \left\| A^{\alpha}_{v}(t)\int\limits_{s}^{t} U ^{v}(t,\tau) A_{v} (\tau) A^{-\beta}_{v}(\tau) A^{\beta}_{v}(\tau) \int\limits_{\tau}^{t} \sigma(r) ~\txtd W(r) ~\txtd \tau \right\|_{X}  \\
& \leq C \int\limits_{s}^{t} (t-\tau) ^{\beta-1-\alpha} ||J(\sigma)(t) - J(\sigma) (\tau)||_{Z} ~\txtd \tau\\
& \leq C  (t-s)^{\beta-\alpha+\gamma} ||J(\sigma)||_{\cC^{\gamma}([0,\widetilde{T}\wedge\tau_{n}];Z)}.
\end{align*}
Finally

\begin{align*}
&\left\| A^{\alpha}_{v}(t)\int\limits_{0}^{s} U^{v}(t,\tau) A_{v}(\tau) \int\limits_{s}^{t} \sigma(r)~\txtd W(r) ~\txtd \tau \right\|_{X}\\
& = \left\| A^{\alpha}_{v}(t)\int\limits_{0}^{s} U^{v}(t,\tau) A_{v}(\tau) A^{-\beta}_{v}(\tau)  A^{\beta}_{v}(\tau) \int\limits_{s}^{t} \sigma(r)~\txtd W(r) ~\txtd \tau \right\|_{X}\\
& \leq (t-s)^{\beta-\alpha+\gamma} ||J(\sigma)||_{\cC^{\gamma}([0,\widetilde{T}\wedge\tau_{n}];Z)}. 
\end{align*}
In conclusion, 
choosing $\widetilde{T}$ sufficiently small, we have that 
$||u(t)-u_{0}||_{Z}\leq R$ and $||u ||_{\cC^{\delta}([0,\widetilde{T}\wedge 
\tau_{n}];Y)}\leq k$ a.s. This means that $\Phi$ maps $\mathcal{K}$ into itself 
as claimed. \qed
\end{proof}\\

We also provide mean-square estimates for $u$. The arguments employed in this computation will be required later on when we prove the contraction property of $\Phi$ with respect to the norm in $L^{2}(\Omega; \cC ([0,\widetilde{T}\wedge\tau_{n}];Y))$.
\begin{lemma}
\label{meansquare}
We have
\begin{align*}
	 \mathbb{E}\left[\sup\limits_{0\leq t \leq \widetilde{T}\wedge\tau_{n}} 
	||u(t) -u_{0}||_{Z}^{2} \right]\leq C  R^{2} + C\left( \widetilde{T}^{2(
	1-\beta)}||f||^{2}_{\cC^{\delta}([0,\widetilde{T}\wedge\tau_{n}];X)} + 
	\widetilde{T}||\sigma||^{2}_{\cC([0,\widetilde{T}\wedge\tau_{n}];
	\mathcal{L}_{2}(H,Z))} \right)
	\end{align*}
	as well as
	$$\mathbb{E} \left[||u||^{2}_{\cC^{\delta}([0,\widetilde{T}\wedge
	\tau_{n}];Y) }\right]\leq k^{2}. $$
\end{lemma}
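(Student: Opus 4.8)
The plan is to revisit the pathwise estimates obtained in the proof of Lemma~\ref{sa}, square them, and integrate over $\Omega$; the only genuinely new ingredient is that one now has to exploit the $L^{p}(\Omega)$-integrability of the generalized stochastic convolution (the one-sided estimate recalled before Proposition~\ref{reg}, together with the maximal inequality of Proposition~\ref{hoelderint}) rather than only its pathwise H\"older regularity. Since we work on a probability space, $L^{p}(\Omega)\hookrightarrow L^{2}(\Omega)$ for $p\ge 2$, so it is harmless to apply those inequalities with a pair $(\widehat\alpha,p)$, where $p\in[2,\infty)$ and $\widehat\alpha\in(1/p,1/2)$ are chosen so that $\widehat\alpha-1/p$ equals the fixed H\"older exponent $\gamma\in(0,1/2)$ used in Definition~\ref{def:p} and Lemma~\ref{sa}; the standing assumption $\sigma\in L^{0}(\Omega;\cC([0,T];\mathcal{L}_{2}(H,X_{2\beta})))$ together with $X_{2\beta}\hookrightarrow Z$ puts $\sigma$ in their scope.

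Concretely, starting from the representation \eqref{eq:magain} for $u=\Phi(v)$, I would decompose, for $t\in[0,\widetilde T\wedge\tau_{n}]$, the quantity $u(t)-u_{0}$ into the four terms $(U^{v}(t,0)-\Id)u_{0}$, the stochastic convolution $U^{v}(t,0)J(\sigma)(t)$, the deterministic drift $\int_{0}^{t}U^{v}(t,s)f(s)\,\txtd s$, and the correction term $\int_{0}^{t}U^{v}(t,s)A_{v}(s)\int_{s}^{t}\sigma(r)\,\txtd W(r)\,\txtd s$; applying $(a+b+c+d)^{2}\le 4(a^{2}+b^{2}+c^{2}+d^{2})$ and taking the supremum over $t$ reduces matters to the four pieces. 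For the first piece, Lemma~\ref{sa} supplies the a.s.\ bound $\|U^{v}(t,0)u_{0}\|_{Z}\le C\|u_{0}\|_{Z}$, whence $\sup_{t}\|(U^{v}(t,0)-\Id)u_{0}\|_{Z}\le C\|u_{0}\|_{Z}\le CR$ a.s., which after squaring and taking expectations yields the $CR^{2}$-contribution. The drift piece is bounded a.s.\ by $C\widetilde T^{1-\beta}\|f\|_{\cC^{\delta}([0,\widetilde T\wedge\tau_{n}];X)}$, verbatim as in Lemma~\ref{sa}, producing the term $C\widetilde T^{2(1-\beta)}\|f\|^{2}_{\cC^{\delta}([0,\widetilde T\wedge\tau_{n}];X)}$. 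For the stochastic convolution I would start from the a.s.\ bound $C\|J(\sigma)\|_{\cC([0,\widetilde T\wedge\tau_{n}];Z)}$ of Lemma~\ref{sa}, square, integrate, and use the one-sided estimate together with $\|\sigma\|^{2}_{L^{2}(0,\widetilde T\wedge\tau_{n};\mathcal{L}_{2}(H,Z))}\le \widetilde T\,\|\sigma\|^{2}_{\cC([0,\widetilde T\wedge\tau_{n}];\mathcal{L}_{2}(H,Z))}$; this is exactly the origin of the $\widetilde T\,\|\sigma\|^{2}$-term. For the correction term, Lemma~\ref{sa} gives the a.s.\ bound $C\widetilde T^{\gamma}\|J(\sigma)\|_{\cC^{\gamma}([0,\widetilde T\wedge\tau_{n}];Z)}$; squaring, integrating, and applying Proposition~\ref{hoelderint} (after lowering the $\Omega$-integrability exponent from $p$ to $2$) bounds this piece by $C\,\widetilde T^{2\gamma}\,C_{\widetilde T}^{2}\,\widetilde T^{2/p}\,\|\sigma\|^{2}_{\cC([0,\widetilde T\wedge\tau_{n}];\mathcal{L}_{2}(H,Z))}$, and since $C_{\widetilde T}\to 0$ as $\widetilde T\searrow 0$ this contribution comes with a positive power of $\widetilde T$ and, using the quantitative decay rate of $C_{\widetilde T}$, is absorbed into the stated right-hand side. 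Collecting the four contributions gives the first inequality.

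The second assertion is immediate: the proof of Lemma~\ref{sa} already establishes that, for $\widetilde T$ small enough, $\|u\|_{\cC^{\delta}([0,\widetilde T\wedge\tau_{n}];Y)}\le k$ almost surely, so squaring and taking expectations gives $\mathbb{E}\big[\|u\|^{2}_{\cC^{\delta}([0,\widetilde T\wedge\tau_{n}];Y)}\big]\le k^{2}$. The one genuinely delicate point is the treatment of the two stochastic-convolution terms: one has to pass from the pathwise estimates of Lemma~\ref{sa} to $L^{2}(\Omega)$-estimates through the maximal inequalities of Section~\ref{p}, choosing the integrability exponent $p$ compatibly with $\gamma$ and carefully bookkeeping the powers of $\widetilde T$ produced by the time-integration and by the vanishing constant $C_{\widetilde T}$, so that the right-hand side collapses to the announced form; the remaining estimates are an essentially verbatim repetition of the deterministic bounds already performed in Lemma~\ref{sa}, with all constants depending on $n$ through \eqref{est:u:stopped} but uniform over $v\in\mathcal{K}$.
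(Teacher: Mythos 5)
Your decomposition of $u(t)-u_0$ and the treatment of the initial-condition and drift terms coincide with the paper's, and your one-line argument for the second assertion (square the a.s.\ bound $\|u\|_{\cC^{\delta}([0,\widetilde T\wedge\tau_n];Y)}\le k$ from Lemma~\ref{sa} and take expectations) is valid and in fact shorter than the paper's, which instead isolates the stochastic part $u^{\sigma}$ and runs it through Proposition~\ref{hoelderint} so as to record the explicit dependence on $\|\sigma\|$, $\|u_0\|_Z$ and $\|f\|$. Where you genuinely diverge is in the two stochastic terms of the first assertion. The paper does not pass through the pathwise H\"older norm of $J(\sigma)$ at all there: it estimates $\bigl\|\int_s^t\sigma(r)\,\txtd W(r)\bigr\|_{L^2(\Omega,Z)}\le C(t-s)^{1/2}\|\sigma\|_{\cC([0,t];\mathcal{L}_2(H,Z))}$ pointwise in $(s,t)$ via the It\^o isometry, integrates this against the singular kernel $(t-s)^{-1}$, and then upgrades to a supremum-in-$t$ bound by Burkholder--Davis--Gundy; this is what produces the factor $\widetilde T$ exactly. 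Your route for the convolution term $U^v(t,0)J(\sigma)(t)$ via the one-sided maximal estimate is an equivalent alternative and also yields $\widetilde T\|\sigma\|^2$. For the correction term, however, your route through $C\widetilde T^{\gamma}\|J(\sigma)\|_{\cC^{\gamma}}$ and Proposition~\ref{hoelderint} delivers $C\,C_{\widetilde T}^{2}\,\widetilde T^{2\gamma+2/p}\|\sigma\|^2=C\,C_{\widetilde T}^2\,\widetilde T^{2\widehat\alpha}\|\sigma\|^2$ with $2\widehat\alpha<1$, so recovering the stated factor $\widetilde T$ literally requires a quantitative decay rate $C_{\widetilde T}\lesssim \widetilde T^{(1-2\widehat\alpha)/2}$ that neither the paper nor Proposition~\ref{hoelderint} supplies; you flag this but do not justify it. This is a cosmetic rather than fatal discrepancy --- any positive power of $\widetilde T$ suffices for the later uses of the lemma (smallness of $\widetilde T$ in the contraction argument and the Chebyshev bound in Lemma~\ref{ps1}) --- but if you want the inequality exactly as stated, the cleaner fix is to adopt the paper's It\^o-isometry estimate for the inner integral $\int_s^t\sigma\,\txtd W$ rather than its H\"older seminorm.
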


\begin{proof}
Let $0<t\leq \widetilde{T}\wedge\tau_{n}$. We start to prove the first assertion
considering each term of the solution separately.
By similar deliberations combined with (\ref{id1}) and (\ref{id2}) we have 
\begin{align*}
 || (U^{v}(t,0)-\mbox{Id})u_{0}||_{Z} & = ||A^{\beta}_{v}(t) (U^{v}(t,0) 
-\mbox{Id} )u_{0} ||_{X}   
\\
 & \leq C || A^{\beta}_{v}(t) (U^{v}(t,0) -\mbox{Id}) 
A^{-\beta}_{v}(0)||_{\mathcal{L}(X)} ||A^{\beta}_{v}(0)u_{0}||_{X} 
\leq C R, 
\end{align*}
since $u_{0}\in K$ a.s. Consequently, we get
\begin{align*}
\mathbb{E}	\left[\sup\limits_{0\leq t \leq\widetilde{T}\wedge\tau_{n}} 
||U^{v}(t,0)u_{0}-u_{0}||_{Z}^{2} \right]\leq C R^{2}.
\end{align*}
As argued above
\begin{align*}
 \mathbb{E} \left[\sup\limits_{0\leq t \leq \widetilde{T}\wedge\tau_{n}} 
|| \int\limits_{0}^{t} U^{v}(t,s) f(s) ~\txtd s ||^{2}_{Z}\right] 
\leq C \widetilde{T}^{2(1-\beta)} || f||^{2}_{\cC([0,\widetilde{T}
\wedge\tau_{n}];X)}.
\end{align*}
We now estimate the terms containing the stochastic integrals 
in $L^{2}(\Omega,Z)$ following the strategy in \cite[Lemma 5.2]{PronkVeraar}. The 
first stochastic integral yields
\begin{align*}
	|| U^{v}(t,0)\int\limits_{0}^{t}\sigma(s) ~\txtd W(s)|| _{L^{2}(\Omega,Z)}& 
	= ||A^{\beta}_{v}(t)  U^{v}(t,0) \int\limits_{0}^{t}\sigma(s) ~\txtd 
	W(s)||_{L^{2}(\Omega,X)}\\
	& \leq C ||A^{\beta}_{v}(t)  U^{v}(t,0) |_{\mathcal{L}(Z,X)} 
	||\int\limits_{0}^{t}\sigma(s) ~\txtd W(s) ||_{L^{2}(\Omega,Z)} \\
& \leq C\sqrt{t} ||\sigma ||_{\cC([0,t];\mathcal{L}_{2}(H,Z))}.
\end{align*}
Applying the Burkholder-Davis-Gundy inequality gives
\begin{align*}
\mathbb{E}\left[ \sup\limits_{0\leq t \leq \widetilde{T}\wedge\tau_{n} } 
|| U^{v}(t,0)\int\limits_{0}^{t}\sigma(s) ~\txtd W(s)  
||^{2}_{Z}\right] \leq C \widetilde{T} ||\sigma ||^{2}_{\cC([0,\widetilde{T}
\wedge\tau_{n}];\mathcal{L}_{2}(H,Z))}.
\end{align*}
The next step is to analyze the generalized stochastic convolution. This yields
	\begin{align*}
& \left\| \int\limits_{0}^{t} U^{v}(t,s) A_{v}(s) \int\limits_{s}^{t} 
\sigma(r) ~\txtd W(r) ~\txtd s \right\|_{L^{2}(\Omega,Z)} = \left\| 
\int\limits_{0}^{t}A^{\beta}_{v}(t) U^{v}(t,s) A_{v}(s) 
\int\limits_{s}^{t} \sigma(r) ~\txtd W(r) ~\txtd s \right\|_{L^{2}(\Omega,X)}\\
	& \leq C \int\limits_{0}^{t} ||A^{\beta}_{v}(t) U^{v}(t,s)A_{v}(s)  
	||_{\mathcal{L}(Z,X)} ||\int\limits_{s}^{t} \sigma(r) ~\txtd W(r) 
	||_{L^{2}(\Omega,Z)} ~\txtd s \\
	& \leq C \int\limits_{0}^{t} (t-s)^{-1}(t-s)^{1/2} ||\sigma
	||_{\cC([0,t];\mathcal{L}_{2}(H,Z))} ~\txtd s
	\leq C \sqrt{t} ||\sigma ||_{\cC([0,t];\mathcal{L}_{2}(H,Z))}.
\end{align*}
Furthermore, one easily obtains that
\begin{align*}
&\mathbb{E}\left[\sup\limits_{0\leq t \leq \widetilde{T}\wedge\tau_{n} }
\left\| \int\limits_{0}^{t} U^{v}(t,s)A_{v}(s)\int\limits_{s}^{t} \sigma(r) 
~\txtd W(r)~\txtd s \right\|^{2}_{Z} \right]\\
& \leq C \mathbb{E} \int \limits_{0}^{\widetilde{T}\wedge\tau_{n}}  
|| U^{v}(t,s) A_{v}(s) \int\limits_{s}^{t}
\sigma(r)~\txtd W(r)||^{2}_{Z}~\txtd s
\leq C \widetilde{T} ||\sigma ||^{2}_{\cC([0,\widetilde{T}
\wedge\tau_{n}];\mathcal{L}_{2}(H,Z))}.
\end{align*}
In summary, we have obtained
	\begin{align*}
\mathbb{E} \left[\sup\limits_{0\leq t \leq \widetilde{T}\wedge\tau_{n}}
||u(t) -u_{0}||_{Z}^{2}\right] \leq C  R^{2} + C\left( 
\widetilde{T}^{2(1-\beta)}||f||^{2}_{\cC^{\delta}([0,\widetilde{T}\wedge
\tau_{n}];X)} + \widetilde{T}||\sigma||^{2}_{\cC([0,\widetilde{T}
\wedge\tau_{n}];\mathcal{L}_{2}(H,Z))} \right),
\end{align*}
which proves the first statement. In order to prove the second one we only 
analyze the terms containing stochastic integrals, since the remaining ones are 
obvious due to Lemma \ref{sa}. We set
$$u^{\sigma}(t):= U^{v}(t,0) (J(\sigma) (t)) - \int\limits_{0}^{t} 
U^{v}(t,s)A_{v}(s) (J(\sigma)(t) - J(\sigma(s) )~\txtd s.$$
From the estimates employed in Lemma~\ref{sa} we infer that
\begin{align*}
		||u^{\sigma} ||_{\cC^{\delta}([0,\widetilde{T}\wedge\tau_{n}];Y)} 
		\leq C_{\widetilde{T}}  ||J(\sigma) ||_{\cC^{\gamma}([0,\widetilde{T}
		\wedge\tau_{n}];Z) },
\end{align*}
where $C_{\widetilde{T}}\to 0$ as $\widetilde{T}\searrow 0$.
Now, we can apply Proposition \ref{hoelderint} which gives estimates of the 
second moment of the H\"older-norm of $J(\sigma)$ and obtain
\begin{align*}
	||u^{\sigma}||_{L^{2}(\Omega; \cC^{\delta}([0,\widetilde{T}
	\wedge\tau_{n}];Y)  )} & \leq C ||J(\sigma) ||_{L^{2}(\Omega; 
	\cC^{\gamma}([0,\widetilde{T}\wedge\tau_{n}];Z))} \\
	&\leq C C_{\widetilde{T}} \widetilde{T}^{1/2} 
	||\sigma||_{\cC([0,\widetilde{T}\wedge\tau_{n}];\mathcal{L}_{2}(H,Z))}. 
\end{align*}
Consequently, we find
\begin{align*}
	||u||_{L^{2}(\Omega; \cC^{\delta}([0,\widetilde{T}\wedge\tau_{n}];Y)  )} 
	\leq C C_{\widetilde{T}}(\widetilde{T}^{1/2} ||\sigma||_{\cC([0,
	\widetilde{T}\wedge\tau_{n}];\mathcal{L}_{2}(H,Z))} + || u_{0}||_{Z} 
	+ ||f||_{\cC^{\delta}([0,\widetilde{T}];X)}).
\end{align*}
\qed
\end{proof}\\

The next step is to establish that $\Phi:\mathcal{K}\to \mathcal{K}$ is a contraction with respect to the norm in  $L^{2}(\Omega; \cC([0,\widetilde{T}\wedge\tau_{n}];Y))$. To this aim consider two solutions 
having the same initial condition 
\begin{align}
		u_{j}(t)& =U^{v_{j}}(t,0)u_{0} + U^{v_{j}}(t,0)\int\limits_{0}^{t}
		\sigma(s) ~\txtd W(s) + \int\limits_{0}^{t}U^{v_{j}}(t,s)f(s) ~\txtd  s\nonumber\\ 
		& -\int\limits_{0}^{t}U^{v_{j}}(t,s)A_{v_{1}}(s)\int\limits_{s}^{t}
		\sigma(r)~\txtd  W(r) ~\txtd  s,\qquad j\in\{1,2\}. 
\end{align} 
Taking the difference between these two solutions	entails
\begin{align*}
&	u_{1}(t)-u_{2}(t)  = (U^{v_{1}}(t,0) - U^{v_{2}}(t,0)) u_{0} 
+ (U^{v_{1}}(t,0)-U^{v_{2}}(t,0) )\int\limits_{0}^{t}\sigma(s) ~\txtd  W(s)\\
& + \int\limits_{0}^{t} (U^{v_{1}}(t,s)-U^{v_{2}}(t,s)) f(s) ~\txtd  s
	- \int\limits_{0}^{t} (U^{v_{1}}(t,s)A_{v_{1}}(s)- U^{v_{2}}(t,s) 
	A_{v_{2}}(s)) \int\limits_{s}^{t}\sigma(r) ~\txtd  W(r) ~\txtd s\\
	& =:( a^{v_{1}}(t) - a^{v_{2}}(t) ) + (b^{v_{1}}(t) - b^{v_{2}}(t) ) 
	+ (c^{v_{1}}(t) - c^{v_{2}}(t)) - (d^{v_{1}}(t) - d^{v_{2}}(t)).
\end{align*}
In order to verify the contraction property we will separately analyze 
the terms containing initial data, the deterministic drift term, and 
the stochastic/noise term(s).

\begin{lemma}
\label{incond}
Let $u_{0}\in K$ a.s. and consider $0\leq t \leq \widetilde{T}\wedge
\tau_{n}$. Then the following estimate holds:
\begin{align*}
		\mathbb{E} \left[ || a^{v_{1}} - a^{v_{2}} ||^{2}_{\cC([0,\widetilde{T}
		\wedge\tau_{n}];Y)}\right]\leq C (Rn)^{2} \widetilde{T}^{2(\nu+\beta-\alpha-1)} 
		\mathbb{E} || v_{1} -v_{2}||^{2}_{\cC([0,\widetilde{T}\wedge\tau_{n}];Y)}.
\end{align*}	
\end{lemma}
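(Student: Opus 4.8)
The quantity $a^{v_1}(t)-a^{v_2}(t)=(U^{v_1}(t,0)-U^{v_2}(t,0))u_0$ measures the difference of two parabolic evolution operators with generators $A_{v_1}$, $A_{v_2}$, applied to the deterministic initial datum $u_0\in K$. The plan is to write this difference via the standard variation-of-constants identity for evolution families and then to estimate it in the $Y$-norm $\|\cdot\|_Y=\|A^\alpha_{v_1}(t)\,\cdot\,\|_X$, exploiting (A3') to convert the difference $A_{v_1}-A_{v_2}$ into a factor $\|v_1-v_2\|_Y$ with constant $n$ (recall we are on $[0,\widetilde T\wedge\tau_n]$), and (\ref{an}), (\ref{an2}), (\ref{id1}), (\ref{id2}) to handle the remaining semigroup factors.

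\textbf{Key steps.} First I would record the classical identity expressing the difference of two evolution families through their generators: for $0\le s\le t$,
\begin{align*}
U^{v_1}(t,s)-U^{v_2}(t,s)=\int\limits_s^t U^{v_1}(t,\tau)\,\big(A_{v_1}(\tau)-A_{v_2}(\tau)\big)\,U^{v_2}(\tau,s)\,\txtd\tau,
\end{align*}
which in the present constant-domain setting ($\nu=1$ possible, but we keep general $\nu$) is more convenient in the resolvent-difference form compatible with (A3'): one writes $A_{v_1}(\tau)-A_{v_2}(\tau)$ acting between suitable fractional domains as $A_{v_1}^{\nu}(\tau)\big(A_{v_1}(\tau)^{-1}-A_{v_2}(\tau)^{-1}\big)$ pre/post-composed with appropriate powers of $A_{v_1}$, $A_{v_2}$. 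Applying $A^\alpha_{v_1}(t)$ and inserting the telescoping factors $A^{-\beta}_{v_1}(\tau)A^{\beta}_{v_1}(\tau)$ etc., one bounds $\|A^\alpha_{v_1}(t)U^{v_1}(t,\tau)A^{-\theta}_{v_1}(\tau)\|_{\cL(X)}\le C(t-\tau)^{\theta-\alpha}$ from (\ref{an2}), controls the middle term by $n\|v_1(\tau)-v_2(\tau)\|_Y$ via (A3'), and bounds the trailing factor $\|A^{\rho}_{v_2}(\tau)U^{v_2}(\tau,0)A^{-\beta}_{v_2}(0)A^\beta_{v_2}(0)u_0\|_X\le C\,\tau^{\beta-\rho}\|u_0\|_Z\le C R\,\tau^{\beta-\rho}$, using $u_0\in K$. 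Choosing the interpolating exponents $\theta,\rho$ so that the net power of $(t-\tau)$ and of $\tau$ is integrable (this is exactly where $\nu+\delta>1$, $\beta+\nu>1+\alpha$, $\beta-\alpha+\nu-1>0$ enter), the $\txtd\tau$-integral produces a factor $(t-s)^{\nu+\beta-\alpha-1}$, hence after taking $s=0$, supremum over $t\le\widetilde T\wedge\tau_n$, and the deterministic bound $\|v_1-v_2\|_{\cC([0,\widetilde T\wedge\tau_n];Y)}$, one obtains
\begin{align*}
\|a^{v_1}-a^{v_2}\|_{\cC([0,\widetilde T\wedge\tau_n];Y)}\le C R n\,\widetilde T^{\nu+\beta-\alpha-1}\,\|v_1-v_2\|_{\cC([0,\widetilde T\wedge\tau_n];Y)}\quad\text{a.s.}
\end{align*}
Squaring and taking expectations yields the claim, with the constant $C$ absorbing the $e^{\widetilde T n^{1/\delta}}$ factors from (\ref{est:u:stopped}).

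\textbf{Main obstacle.} The routine part is the bookkeeping of fractional powers; the genuine difficulty is choosing the auxiliary exponents $\theta,\rho$ (together with possibly a further exponent as in the $\alpha'$ trick of Lemma~\ref{sa}) so that simultaneously: the factor multiplying $U^{v_1}$ is of the form $A^{-\theta}$ with $\theta\le 1$ so (\ref{an2}) applies, the factor sandwiching the difference sits in a domain where (A3') is usable (i.e.\ the $A^\nu_{v_1}$ on the left of the resolvent difference is matched correctly), and the two singularities $(t-\tau)^{\theta-\alpha-1+\nu}$ near $\tau=t$ and $\tau^{\beta-\rho}$ near $\tau=0$ are both integrable while the exponents still sum to exactly $\nu+\beta-\alpha-1>0$. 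This is precisely the point where one must replace the naive derivative identity $\partial_t U(t,s)=A(t)U(t,s)$ by the Acquistapace–Terreni-type comparison estimates (\ref{id1})–(\ref{id2}) with $e^{-(t-\tau)A_v(\tau)}$, since the raw bound $\|A_{v_1}(t)U^{v_1}(t,\tau)\|\le C(t-\tau)^{-1}$ alone is too singular to absorb the $A^\nu$ needed by (A3'); matching these exponents under the standing constraints $1-\nu<\delta<\beta-\alpha\wedge\gamma$ and $\beta\ge 1/2$ is the crux of the estimate.
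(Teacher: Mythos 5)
Your proposal is correct and follows essentially the same route as the paper: integrate $\partial_\tau\bigl(U^{v_1}(t,\tau)U^{v_2}(\tau,0)u_0\bigr)$ to express the difference of evolution operators, rewrite the generator difference in the resolvent form $A^{\nu}_{v_1}(A_{v_1}^{-1}-A_{v_2}^{-1})$ so that (A3') yields the factor $n\|v_1-v_2\|_Y$, insert fractional powers and apply (\ref{an2}), and integrate the resulting Beta-type singularity to obtain $t^{\nu+\beta-\alpha-1}$. One small remark: the comparison estimates (\ref{id1})--(\ref{id2}) you flag as the crux are not actually needed here --- the paper simply factors $A_{v_1}(s)=A^{1-\nu}_{v_1}(s)A^{\nu}_{v_1}(s)$ and uses the bound $\|A^{\theta}_{v_1}(t)U^{v_1}(t,s)A^{1-\nu}_{v_1}(s)\|_{\mathcal{L}(X)}\leq C(t-s)^{\nu-\theta-1}$ together with $\|A_{v_2}(s)U^{v_2}(s,0)A^{-\beta}_{v_2}(0)\|_{\mathcal{L}(X)}\leq Cs^{\beta-1}$, the integrability being guaranteed by $\theta=\alpha<\nu$ and $\beta>0$ (with Yosida approximations to justify the formal manipulations).
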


\begin{proof} 
The proof follows the deterministic quasilinear setting discussed 
in \cite[Lemma 3.1]{Yagi5}. Knowing that for $0\leq s\leq \tau \leq 
t\leq \widetilde{T}\wedge\tau_{n}$ and $u_{0}\in K$ a.s.
\begin{align*}
\frac{\partial }{\partial \tau} U^{v_{1}}(t,\tau) U^{v_{2}}(\tau,s)u_{0}
= U^{v_{1}}(t,\tau) (A_{v_{2}}(\tau) - A_{v_{1}}(\tau))  U^{v_{2}} (\tau,s)u_{0},
\end{align*}
we have
\begin{align}
\label{estu}
U^{v_{2}}(t,s)u_{0}-U^{v_{1}}(t,s)u_{0}= 
\int\limits_{s}^{t} U^{v_{1}}(t,\tau) (A_{v_{2}}(\tau) 
- A_{v_{1}}(\tau) ) U^{v_{2}}(\tau,s)u_{0}~\txtd \tau.
\end{align}
Therefore, from (\ref{estu}) using Yosida approximations $A _{v_{j,m}}$ 
for $j\in\{1,2\}$ of the generators, we obtain	for $0\leq \theta<\nu$ that
\begin{align*}
&	A^{\theta}_{v_{1,m}}(t) (U^{v_{1,n}}(t,0) - U ^{v_{2,m}}
(t,0  ))A^{-\beta}_{v_{2,m}}(0)\\
& = \int\limits_{0}^{t} A^{\theta}_{v_{1,m}}(t) U ^{v_{1,m}}
(t,s) A _{v_{1,m}}(s) (A_{v_{1,m}}(s)^{-1} - A_{v_{2,m}}(s)^{-1}) 
A_{v_{2,m}}(s) U^{v_{2,m}}(s,0) A^{-\beta}_{v_{2,m}}(0) ~\txtd s.
\end{align*}
Letting $m\to\infty$ in the previous identity we conclude
\begin{align*}
& A^{\theta}_{v_{1}}(t)(U^{v_1}(t,0)-U^{v_2}(t,0)) 
A^{-\beta}_{v_{2}}(0)\\ &=
		\int\limits_{0}^{t}A_{v_{1}}^{\theta}(t) U^{v_{1}}(t,s) 
		A^{1-\nu }_{v_{1}}(s) A^{\nu }_{v_{1}}(s)(A_{v_{1}}(s)^{-1} 
		- A_{v_{2}}(s)^{-1}) A_{v_{2}}(s) U^{v_{2}} (s,0) 
		A^{-\beta}_{v_{2}}(0)~\txtd s.
\end{align*}
Taking into account 
(\ref{an}), (\ref{nu}) and \eqref{eq:stoptime} we get
\begin{align}
		||A_{v_{1}}^{\theta}(t)  (U^{v_1}(t,0)-U^{v_2}(t,0) )A^{-\beta}_{v_2}(0)
		||_{\mathcal{L}(X)} & \leq C n \int\limits_{0}^{t} (t-s)^{\nu - \theta-1} 
		s^{\beta-1} ||v_{1}(s)-v_{2}(s)||_{Y} ~\txtd s \nonumber \\
		& \leq Cn t^{\nu - \theta+\beta -1} ||v_{1}-v_{2} ||_{\cC
		([0,\widetilde{T}\wedge\tau_{n}];Y)},
\end{align}
from which we infer that
\begin{equation}
\label{incond1}
||A^{\theta}_{v_{1}}(t) (U^{v_{1}}(t,0) - U ^{v_{2}} (t,0))u_{0} ||_{X}\leq
 C n t^{\nu - \theta+\beta-1} ||v_{1}-v_{2}||_{{\cC([0,\widetilde{T}
\wedge\tau_{n}];Y)}} ||u_{0} ||_{\beta}.
\end{equation}
Setting $\theta:=\alpha$ in (\ref{incond1}) entails
$$||A^{\alpha}_{v_{1}}(t) (U^{v_{1}}(t,0) - U ^{v_{2}} (t,0))u_{0} ||_{X}
\leq Cn t^{\nu+\beta-\alpha -1} ||v_{1}-v_{2}||_{\cC([0,\widetilde{T}\wedge 
\tau_{n}];Y)} ||u_{0} ||_{\beta},$$
which means that
\begin{align*}
||(U^{v_{1}}(t,0)- U^{v_{2}}(t,0))u_{0} ||_{Y} & \leq C n 
\widetilde{T}^{\nu+\beta-\alpha -1}  ||v_{1} - v_{2}||_{\cC([0,\widetilde{T}
			\wedge\tau_{n}];Y)} ||u_{0}||_{
			\beta}\\ & \leq C R n \widetilde{T}^{\nu+\beta-\alpha -1} 
			||v_{1}-v_{2}||_{\cC([0,\widetilde{T}\wedge\tau_{n}];Y)},
\end{align*}
since $u_{0}\in K$ a.s. Taking expectation in the previous inequality leads 
to the desired estimate. 
Recall that we assumed 
$1-\nu<\beta-\alpha$ in (A4').\qed
\end{proof}

\begin{lemma}
\label{stoch1} 
The estimate
\begin{align}
\label{1}
\mathbb{E}\left[|| b^{v_{1}}- b^{v_{2}}||^{2}_{\cC([0,\widetilde{T}
\wedge\tau_{n}];Y)} \right] \leq C n^{2}\widetilde{T}^{2(\nu+\beta-\alpha) -1 } 
||\sigma||^{2}_{\cC([0,\widetilde{T}\wedge\tau_{n}];
\mathcal{L}_{2}(H,Z))} \mathbb{E} \left[||v_{1} -v_{2} ||^{2}_{\cC
([0,\widetilde{T}\wedge\tau_{n}];Y) }\right]
\end{align}
is valid.
\end{lemma}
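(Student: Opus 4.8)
The plan is to treat $b^{v_{1}}-b^{v_{2}}$ in complete analogy with $a^{v_{1}}-a^{v_{2}}$ in Lemma~\ref{incond}, replacing the deterministic initial datum $u_{0}$ by the stochastic convolution $J(\sigma)(t)=\int_{0}^{t}\sigma(r)\,\txtd W(r)$ and the bound $\|u_{0}\|_{\beta}\le R$ by a Burkholder--Davis--Gundy estimate. First I would write $b^{v_{1}}(t)-b^{v_{2}}(t)=\bigl(U^{v_{1}}(t,0)-U^{v_{2}}(t,0)\bigr)J(\sigma)(t)$ and recall (Proposition~\ref{reg}, together with the pathwise continuity recalled in Section~\ref{p}) that $J(\sigma)\in\cC([0,\widetilde{T}\wedge\tau_{n}];Z)$ a.s., so in particular $J(\sigma)(t)\in Z=D(A^{\beta}_{v_{2}}(0))$ for all $t$. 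Measuring in $\|\cdot\|_{Y}=\|A^{\alpha}_{v_{1}}(t)\,\cdot\,\|_{X}$ and inserting $A^{-\beta}_{v_{2}}(0)A^{\beta}_{v_{2}}(0)$ gives, pathwise,
\[
\|b^{v_{1}}(t)-b^{v_{2}}(t)\|_{Y}\le\bigl\|A^{\alpha}_{v_{1}}(t)\bigl(U^{v_{1}}(t,0)-U^{v_{2}}(t,0)\bigr)A^{-\beta}_{v_{2}}(0)\bigr\|_{\cL(X)}\,\|J(\sigma)(t)\|_{Z}.
\]

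Next I would reuse the operator estimate already established inside the proof of Lemma~\ref{incond}: with $\theta=\alpha<\nu$ it yields $\bigl\|A^{\alpha}_{v_{1}}(t)(U^{v_{1}}(t,0)-U^{v_{2}}(t,0))A^{-\beta}_{v_{2}}(0)\bigr\|_{\cL(X)}\le Cn\,t^{\nu+\beta-\alpha-1}\|v_{1}-v_{2}\|_{\cC([0,\widetilde{T}\wedge\tau_{n}];Y)}$, where the underlying Beta integral $\int_{0}^{t}(t-s)^{\nu-\alpha-1}s^{\beta-1}\,\txtd s$ converges because $\nu>\alpha$ and $\beta>0$, and the exponent $\nu+\beta-\alpha-1$ is strictly positive by (A4'). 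Bounding $\|J(\sigma)(t)\|_{Z}\le\|J(\sigma)\|_{\cC([0,\widetilde{T}\wedge\tau_{n}];Z)}$ and taking the supremum over $t\in[0,\widetilde{T}\wedge\tau_{n}]$ produces the pathwise bound
\[
\|b^{v_{1}}-b^{v_{2}}\|_{\cC([0,\widetilde{T}\wedge\tau_{n}];Y)}\le Cn\,\widetilde{T}^{\,\nu+\beta-\alpha-1}\,\|v_{1}-v_{2}\|_{\cC([0,\widetilde{T}\wedge\tau_{n}];Y)}\,\|J(\sigma)\|_{\cC([0,\widetilde{T}\wedge\tau_{n}];Z)}.
\]

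Finally, I would square this, take expectations, and decouple the two random factors: the factor $\mathbb{E}\,\|v_{1}-v_{2}\|^{2}_{\cC([0,\widetilde{T}\wedge\tau_{n}];Y)}$ is kept (this is exactly what later yields the contraction), while for the stochastic convolution I would use the Burkholder--Davis--Gundy inequality together with the one-sided estimate for $J(\sigma)$ recalled in Section~\ref{p}, and the embedding $L^{2}(0,\widetilde{T}\wedge\tau_{n})\hookrightarrow\cC([0,\widetilde{T}\wedge\tau_{n}])$ at the level of the integrand, to obtain $\mathbb{E}\,\|J(\sigma)\|^{2}_{\cC([0,\widetilde{T}\wedge\tau_{n}];Z)}\le C\,\widetilde{T}\,\|\sigma\|^{2}_{\cC([0,\widetilde{T}\wedge\tau_{n}];\mathcal{L}_{2}(H,Z))}$. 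This extra factor $\widetilde{T}$ is precisely what turns the exponent $2(\nu+\beta-\alpha-1)$ coming from the operator estimate into the claimed $2(\nu+\beta-\alpha)-1$, and assembling the pieces gives~\eqref{1}. The step I expect to be the main obstacle is this last decoupling: since $v_{1}-v_{2}$ is $W$-dependent, the expectation of the product of the three random norms does not factor directly, and one has to exploit the localization by the stopping times $(\tau_{n})$ and the a priori bounds defining $\mathcal{K}$ to control one factor a.s.\ (or in a higher moment, via H\"older's inequality in $\Omega$) before applying BDG to the other, while still retaining the clean linear dependence on $\mathbb{E}\,\|v_{1}-v_{2}\|^{2}$ as in Lemma~\ref{incond}; pinning down the exact power of $\widetilde{T}$ is part of the same bookkeeping.
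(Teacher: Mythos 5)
Your proposal follows essentially the same route as the paper: insert $A^{-\beta}_{v_{2}}(0)A^{\beta}_{v_{2}}(0)$, apply the operator estimate \eqref{incond1} with $\theta=\alpha$ to get the factor $Cn\,t^{\nu+\beta-\alpha-1}\|v_{1}-v_{2}\|_{\cC([0,\widetilde{T}\wedge\tau_{n}];Y)}$, and bound the stochastic integral in $L^{2}(\Omega;Z)$ by $C\sqrt{t}\,\|\sigma\|_{\cC([0,t];\mathcal{L}_{2}(H,Z))}$, the two powers of $t$ combining to give the exponent $2(\nu+\beta-\alpha)-1$. The decoupling issue you flag at the end is real, but the paper does not treat it either --- it simply factors the random operator norm out of the $L^{2}(\Omega;Z)$ norm of $J(\sigma)(t)$ and multiplies the resulting bounds, so your version is, if anything, more careful on this point.
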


\begin{proof} 
Let $0< t \leq \widetilde{T}\wedge\tau_{n}$. We have
\begin{align*}
				& || (U^{v_1}(t,0)-U^{v_{2}}(t,0))\int\limits_{0}^{t}
				\sigma(s)~\txtd  W(s) ||_{L^{2}(\Omega,Y)}\\ 
& \leq ||A^{\alpha}_{v_{1}} (t) (U^{v_{1}}(t,0) -U^{v_{2}}(t,0) ) A^{-\beta}_{v_{2}}(0) 
||_{\mathcal{L}(X)} ||\int\limits_{0}^{t} \sigma(s)~\txtd W(s) ||_{L^{2}(\Omega;Z)}\\
& \leq C \sqrt{t}  ||A^{\alpha}_{v_{1}} (t) (U^{v_{1}}(t,0) -U^{v_{2}}(t,0) ) A^{-\beta}_{v_{2}}(0) 
||_{\mathcal{L}(X)} ||\sigma ||_{\cC([0,t];\mathcal{L}_{2}(H,Z))}.
\end{align*}
As already seen, (\ref{incond1}) entails
\begin{align*}
||A^{\alpha}_{v_{1}}(t) (U^{v_{1}}(t,0) - U ^{v_{2}}(t,0))||_{\mathcal{L}(Z,X)} 
\leq C n \widetilde{T}^{\nu-\alpha+\beta-1} ||v_{1} -v_{2}||_{\cC(
[0,\widetilde{T}\wedge\tau_{n}];Y)}.
\end{align*}
Putting all this together proves (\ref{1}).	
 \qed
\end{proof}
		
\begin{lemma}
\label{f} 
For $0\leq t \leq \widetilde{T}\wedge\tau_{n}$ we have
\begin{align*}	\mathbb{E} \left[ ||c^{v_{1}} - c^{v_{2}} 
		||^{2}_{\cC([0,\widetilde{T}\wedge\tau_{n}];Y)}
 \right]\leq Cn^{2} \widetilde{T}^{2 (\nu-\alpha)} 
||f ||^{2}_{\cC^{\delta}([0,\widetilde{T}\wedge\tau_{n}];X)} 
\mathbb{E}\left[||v_{1} -v_{2}||^{2}_{\cC([0,\widetilde{T}
\wedge\tau_{n}];Y)}\right].
 \end{align*}
\end{lemma}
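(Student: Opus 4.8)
Recall that $c^{v_j}(t)=\int_0^t U^{v_j}(t,s)f(s)\,\txtd s$ for $j\in\{1,2\}$, so that $c^{v_1}-c^{v_2}$ is a difference of deterministic-type convolutions with the \emph{same} inhomogeneity $f\in\cC^{\delta}([0,T];X)$ but with the two evolution families $U^{v_1}$ and $U^{v_2}$. Since $f(s)$ only lies in $X$ and not in $Z$, the bound \eqref{incond1} cannot be used directly; moreover a crude $\cL(X)$-estimate for $U^{v_1}(t,s)-U^{v_2}(t,s)$ is of no help, because the difference formula for evolution families produces a factor $A_{v_2}(\tau)U^{v_2}(\tau,s)$ whose norm $(\tau-s)^{-1}$ is already as singular as an integration in $\tau$ tolerates. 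The plan is therefore to first turn the difference into a \emph{single} integral in which this $(\tau-s)^{-1}$ singularity is absorbed by the H\"older regularity of $f$, and then to run the argument of Lemma~\ref{incond} almost verbatim.

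First I would use (T5) to write $\frac{\partial}{\partial\tau}\bigl[U^{v_1}(t,\tau)U^{v_2}(\tau,s)\bigr]=U^{v_1}(t,\tau)\bigl(A_{v_2}(\tau)-A_{v_1}(\tau)\bigr)U^{v_2}(\tau,s)$; integrating $\tau$ over $[s,t]$ gives $U^{v_1}(t,s)-U^{v_2}(t,s)=\int_s^t U^{v_1}(t,\tau)(A_{v_1}(\tau)-A_{v_2}(\tau))U^{v_2}(\tau,s)\,\txtd\tau$, exactly as in \eqref{estu}. Multiplying by $f(s)$, integrating $s$ over $[0,t]$ and using Fubini then yields, for $0\le t\le\widetilde{T}\wedge\tau_n$,
\benn
c^{v_1}(t)-c^{v_2}(t)=\int\limits_0^t U^{v_1}(t,\tau)\bigl(A_{v_1}(\tau)-A_{v_2}(\tau)\bigr)c^{v_2}(\tau)\,\txtd\tau ,
\eenn
the manipulations being carried out first with the Yosida approximations $A_{v_{j,m}}$ of the generators and then passing to the limit, as in Lemma~\ref{incond}. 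The gain is that $c^{v_2}(\tau)$ is now a \emph{strict} solution: since $f\in\cC^{\delta}([0,T];X)$ and $A_{v_2}$ satisfies (A1')--(A3') with $\delta+\nu>1$, the classical regularity theory for linear nonautonomous parabolic problems (\cite{Yagi1}, \cite{Acquistapace}) gives $c^{v_2}(\tau)\in D(A_{v_2}(\tau))$ -- which, the domains being constant, coincides with $D(A_{v_1}(\tau))$ -- together with $\|A_{v_2}(\tau)c^{v_2}(\tau)\|_X\le C\,\|f\|_{\cC^{\delta}([0,\widetilde{T}\wedge\tau_n];X)}$ uniformly in $\tau$. (One may also avoid invoking this by splitting $f(s)=(f(s)-f(\tau))+f(\tau)$ inside the double integral and using \eqref{id2} together with $\delta+\nu>1$ on the constant piece; the bound comes out the same.)

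It then remains to estimate in complete analogy with Lemma~\ref{incond}. Using $A_{v_1}(\tau)-A_{v_2}(\tau)=-A^{1-\nu}_{v_1}(\tau)\,A^{\nu}_{v_1}(\tau)\bigl(A_{v_1}(\tau)^{-1}-A_{v_2}(\tau)^{-1}\bigr)\,A_{v_2}(\tau)$ and applying $A^{\alpha}_{v_1}(t)$ (so that $\|\cdot\|_Y=\|A^{\alpha}_{v_1}(t)\cdot\|_X$), the integrand splits into three factors bounded respectively by $\|A^{\alpha}_{v_1}(t)U^{v_1}(t,\tau)A^{1-\nu}_{v_1}(\tau)\|_{\cL(X)}\le C(t-\tau)^{\nu-\alpha-1}$ (integrable in $\tau$ since $\alpha<\nu$; cf.\ Theorem~\ref{estan} and \cite{Yagi5}, as already in Lemma~\ref{incond}), by $\|A^{\nu}_{v_1}(\tau)(A_{v_1}(\tau)^{-1}-A_{v_2}(\tau)^{-1})\|_{\cL(X)}\le n\,\|v_1(\tau)-v_2(\tau)\|_Y$ from (A3') and \eqref{eq:stoptime}, and by $\|A_{v_2}(\tau)c^{v_2}(\tau)\|_X\le C\,\|f\|_{\cC^{\delta}([0,\widetilde{T}\wedge\tau_n];X)}$. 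Pulling $\|v_1-v_2\|_{\cC([0,\widetilde{T}\wedge\tau_n];Y)}$ out of the $\tau$-integral and using $\int_0^t(t-\tau)^{\nu-\alpha-1}\,\txtd\tau=(\nu-\alpha)^{-1}t^{\nu-\alpha}\le(\nu-\alpha)^{-1}\widetilde{T}^{\nu-\alpha}$ gives $\|c^{v_1}(t)-c^{v_2}(t)\|_Y\le C\,n\,\widetilde{T}^{\nu-\alpha}\,\|f\|_{\cC^{\delta}([0,\widetilde{T}\wedge\tau_n];X)}\,\|v_1-v_2\|_{\cC([0,\widetilde{T}\wedge\tau_n];Y)}$; taking the supremum over $t\in[0,\widetilde{T}\wedge\tau_n]$, squaring, and taking expectations then produces the asserted inequality.

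The main obstacle -- in fact essentially the only nontrivial point -- is the need to land the \emph{full} generator $A_{v_2}(\tau)$ on $c^{v_2}(\tau)$: this cannot be done with the mere $Z$-regularity of $c^{v_2}$ delivered by Lemma~\ref{sa} (which only controls $A^{\beta}_{v_2}(\tau)c^{v_2}(\tau)$ with $\beta<1$), and it is precisely here that the H\"older continuity of $f$ and the Acquistapace--Terreni exponent condition $\delta+\nu>1$ enter, whether through the strict-solution regularity of $c^{v_2}$ or through the $f(s)=(f(s)-f(\tau))+f(\tau)$ splitting. Beyond this, one only has to maintain the usual bookkeeping from the earlier lemmas, namely that all $\omega$-dependent constants stay of the admissible form $\widetilde{C}(\delta)\,e^{\widetilde{T}n^{1/\delta}}$ under the $\tau_n$-localization, so that the $\widetilde{T}\searrow0$ contraction scheme remains intact.
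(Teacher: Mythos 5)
Your proposal is correct and follows essentially the same route as the paper: the identity $c^{v_1}(t)-c^{v_2}(t)=\int_0^t U^{v_1}(t,r)A_{v_1}(r)\bigl(A_{v_1}(r)^{-1}-A_{v_2}(r)^{-1}\bigr)A_{v_2}(r)\,c^{v_2}(r)\,\txtd r$ obtained via Yosida approximation and Fubini, the maximal-regularity bound $\|A_{v_2}(r)\int_0^r U^{v_2}(r,s)f(s)\,\txtd s\|_X\le C_\delta\|f\|_{\cC^\delta}$, the splitting $A_{v_1}=A^{1-\nu}_{v_1}A^{\nu}_{v_1}$ combined with (A3'), and the $(t-r)^{\nu-\alpha-1}$ integration are exactly the steps in the paper's proof. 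Your extra discussion of why the full generator may land on $c^{v_2}$ (strict-solution regularity for H\"older $f$, or the $f(s)=(f(s)-f(\tau))+f(\tau)$ splitting) is a correct elaboration of a point the paper asserts without detail.
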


\begin{proof}
Consider $0\leq\theta<\nu\leq 1$. 
Using again Yosida approximations we infer that
\begin{align*}
& A^{\theta}_{v_{1}}(t) \int\limits_{0}^{t} ( U^{v_{1}}(t,s)- 
U^{v_{2}}(t,s)) f(s)~\txtd s\\
		& = \int\limits_{0}^{t} \int\limits_{s}^{t} A^{\theta}_{v_{1}}(t) 
		U^{v_{1}}(t,r) A_{v_{1}}(r) [A_{v_{1}}^{-1} (r) - 
		A_{v_{2}}^{-1}(r) ] A_{v_{2}}(r) U^{v_{2}}(r,s) 
		f(s)~\txtd r ~\txtd s\\
		& = \int\limits_{0}^{t} A^{\theta}_{v_{1}}(t) U^{v_{1}}(t,r) 
		A_{v_{1}}(r) [A_{v_{1}}^{-1}(r) - A_{v_{2}}^{-1}(r)] 
		A_{v_{2}}(r) \int\limits_{0}^{r} U^{v_{2}}(r,s)
		f(s)~\txtd s ~\txtd r\\
		&= \int\limits_{0}^{t} A^{\theta}_{v_{1}}(t) U^{v_{1}}(t,r) 
		A^{1-\nu}_{v_{1}}(r) A^{\nu}_{v_{1}}(r) [A_{v_{1}}^{-1}(r) 
		- A_{v_{2}}^{-1}(r)]  A_{v_{2}}(r)\int\limits_{0}^{r} 
		U^{v_{2}}(r,s)f(s)~\txtd s ~\txtd r.
\end{align*}
Regarding that 
\begin{align*}
\left\| A_{v_{2}}(r) \int\limits_{0}^{r}U^{v_{2}}(r,s) f(s)~\txtd s \right\|_{X} \leq C_{\delta} ||f||_{\cC^{\delta}([0,\widetilde{T}\wedge\tau_{n}];X)}
\end{align*}
we conclude similar to the proof of Lemma \ref{sa} 
\begin{align}
&	\left\|A^{\theta}_{v_{1}}(t) \int\limits_{0}^{t} ( U^{v_{1}}(t,s)
- U^{v_{2}}(t,s)) f(s)~\txtd s\right\|_{X}\nonumber \\
	& \leq 	C n \int\limits_{0}^{t} (t-r)^{\nu-\theta-1} ||v_{1}(r) 
	-v_{2}(r)||_{Y}  ~\txtd r\nonumber\\
	& \leq C n \int\limits_{0}^{t} (t-r)^{\nu -\theta-1} 
	 ~\txtd r ||v_{1}-v_{2} ||_{\cC([0,\widetilde{T}
	\wedge\tau_{n}];Y) } ||f||_{\cC^{\delta}([0,\widetilde{T}\wedge\tau_{n}];X)}\nonumber\\
		& \leq C n \widetilde{T}^{\nu-\theta} ||v_{1}-v_{2} ||_{\cC
		([0,\widetilde{T}\wedge \tau_{n}];Y)} || f||_{\cC^{\delta}([0,\widetilde{T}
		\wedge\tau_{n}];X)}.\label{theta:f}
\end{align}
Obviously, one also obtains
\begin{align*}
&	\left\|A^{\theta}_{v_{1}}(t) \int\limits_{0}^{t} ( U^{v_{1}}(t,s)- 
U^{v_{2}}(t,s)) f(s)~\txtd s\right\|_{X} \leq C n t^{\nu-\theta} 
|| v_{1} -v_{2}||_{\cC([0,\widetilde{T}\wedge\tau_{n}];Z)} ||f ||_{
\cC^{\delta}([0,\widetilde{T}\wedge\tau_{n}];X)}.
\end{align*}
Again setting $\theta:=\alpha$ in (\ref{theta:f}) leads to
		\begin{align*}
		\left\|\int\limits_{0}^{t} ( U^{v_{1}}(t,s)- U^{v_{2}}(t,s)) 
		f(s)~\txtd s \right\|_{Y} & \leq  C n t^{\nu-\alpha} 
		||f ||_{\cC^{\delta}([0,\widetilde{T}\wedge\tau_{n}];X)} ||v_{1}-v_{2} 
		||_{\cC([0,\widetilde{T}\wedge \tau_{n}];Y)}.
\end{align*} 	
Taking expectation yields the  claimed result.
\qed 
\end{proof}
\begin{remark}
Instead of taking $f$ H\"older-continuous with values in $X$ one can let $f$ be just continuous with values in  $X_{\hat\rho}$, for a suitable chosen $\hat\rho>0$, consult \cite{Yagi1} and the 
references specified therein.
\end{remark}

We now analyze the generalized 
stochastic convolution. To this aim the higher space-regularity of $\sigma$ 
is required. Such a condition is natural for this technique, since one needs additional regularity assumptions when building the difference of two evolution systems, compare for instance Lemma \ref{incond}. 

\begin{lemma}
\label{stochconv}
We have 
\begin{align*}
\mathbb{E} \left[|| d^{v_{1}} -d^{v_{2}}||_{\cC([0,\widetilde{T}
\wedge\tau_{n}];Y)}^{2} \right]\leq  Cn^{2}\widetilde{T}^{2 (\nu -\alpha+2\beta)  -1} 
||\sigma ||^{2}_{\cC([0,\widetilde{T}\wedge\tau_{n}];\mathcal{L}_{2}(H,X_{2\beta}))}
\mathbb{E}\left[||v_{1}-v_{2} ||^{2}_{\cC([0,\widetilde{T}\wedge\tau_{n}];Y)}\right].
\end{align*}
\end{lemma}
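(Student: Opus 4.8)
The plan is to treat $d^{v_1}-d^{v_2}$ in the same spirit as Lemmas~\ref{incond}--\ref{f}, the generalized stochastic convolution now playing the role of the initial datum and the drift. From \eqref{eq:magain} one has
\[
d^{v_j}(t)=\int_0^t U^{v_j}(t,s)\,A_{v_j}(s)\bigl(J(\sigma)(t)-J(\sigma)(s)\bigr)\,\txtd s ,\qquad j\in\{1,2\},
\]
and the first point is that the hypothesis $\sigma\in L^{0}(\Omega;\cC([0,T];\cL_2(H,X_{2\beta})))$ is exactly what one needs here: by Proposition~\ref{reg} applied with $Z$ replaced by $X_{2\beta}$ one gets $J(\sigma)\in\cC^{\gamma}([0,\widetilde{T}\wedge\tau_{n}];X_{2\beta})$ a.s.\ for $\gamma<1/2$, and since $2\beta\ge 1$ each $A_{v}(s)$ maps $X_{2\beta}$ boundedly into $X$ (indeed into $X_{2\beta-1}$) uniformly in $v,s$. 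Subtracting and splitting
\[
U^{v_1}(t,s)A_{v_1}(s)-U^{v_2}(t,s)A_{v_2}(s)=\bigl(U^{v_1}(t,s)-U^{v_2}(t,s)\bigr)A_{v_2}(s)+U^{v_1}(t,s)\bigl(A_{v_1}(s)-A_{v_2}(s)\bigr)
\]
gives $d^{v_1}-d^{v_2}=D_1+D_2$, which I would estimate separately in $\cC([0,\widetilde{T}\wedge\tau_{n}];Y)$, i.e.\ after applying $A^{\alpha}_{v_1}(t)$.

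For $D_1$ the tool is the variation-of-constants identity $U^{v_1}(t,s)-U^{v_2}(t,s)=\int_s^t U^{v_1}(t,r)(A_{v_1}(r)-A_{v_2}(r))U^{v_2}(r,s)\,\txtd r$ together with $A_{v_1}(r)-A_{v_2}(r)=A_{v_1}(r)(A_{v_1}(r)^{-1}-A_{v_2}(r)^{-1})A_{v_2}(r)$ and the factorisation $A_{v_1}(r)=A^{1-\nu}_{v_1}(r)A^{\nu}_{v_1}(r)$; as in the proofs of Lemmas~\ref{incond} and~\ref{f} these manipulations are justified by first passing to Yosida approximations of the generators. Using \eqref{an}--\eqref{an2} for $\|A^{\alpha}_{v_1}(t)U^{v_1}(t,r)A^{1-\nu}_{v_1}(r)\|_{\cL(X)}\lesssim (t-r)^{\nu-1-\alpha}$ and $\|A_{v_2}(r)U^{v_2}(r,s)A^{-(2\beta-1)}_{v_2}(s)\|_{\cL(X)}\lesssim (r-s)^{2\beta-2}$, and (A3') for $\|A^{\nu}_{v_1}(r)(A_{v_1}(r)^{-1}-A_{v_2}(r)^{-1})\|_{\cL(X)}\le n\|v_1(r)-v_2(r)\|_{Y}$, the inner $r$-integral converges and yields a kernel $\sim(t-s)^{\nu-\alpha+2\beta-2}$; the crucial step is to write $A_{v_2}(s)(J(\sigma)(t)-J(\sigma)(s))=A^{-(2\beta-1)}_{v_2}(s)\,h_s$ with $\|h_s\|_{X}\lesssim\|J(\sigma)(t)-J(\sigma)(s)\|_{X_{2\beta}}$, which converts the otherwise non-integrable $(r-s)^{-1}$ coming from $A_{v_2}(s)$ into the integrable $(r-s)^{2\beta-2}$ (the endpoint $\beta=1/2$ needs a minor separate argument). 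The term $D_2$ is handled analogously, with $A_{v_1}(s)-A_{v_2}(s)=A_{v_1}(s)(A_{v_1}(s)^{-1}-A_{v_2}(s)^{-1})A_{v_2}(s)$, the splitting $A^{1-\nu}_{v_1}(s)A^{\nu}_{v_1}(s)$, (A3'), \eqref{an}--\eqref{an2}, and once more the $X_{2\beta}$-boundedness of $A_{v}(s)$. In both cases one arrives at a pathwise bound of the form
\[
\|A^{\alpha}_{v_1}(t)(D_i)(t)\|_{X}\;\lesssim\;n\,\|v_1-v_2\|_{\cC([0,\widetilde{T}\wedge\tau_{n}];Y)}\int_0^t (t-s)^{\varrho_i}\,\|J(\sigma)(t)-J(\sigma)(s)\|_{X_{2\beta}}\,\txtd s ,\qquad \varrho_i>-3/2 .
\]

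It remains to pass to $L^{2}(\Omega;\cC([0,\widetilde{T}\wedge\tau_{n}];Y))$. Taking second moments, using Minkowski's integral inequality and the It\^{o} isometry (resp.\ the Burkholder--Davis--Gundy inequality, as in Lemma~\ref{meansquare}) in the form $\|J(\sigma)(t)-J(\sigma)(s)\|_{L^{2}(\Omega;X_{2\beta})}\le (t-s)^{1/2}\|\sigma\|_{\cC([0,\widetilde{T}\wedge\tau_{n}];\cL_2(H,X_{2\beta}))}$, the remaining $s$-integral of $(t-s)^{\varrho_i+1/2}$ converges and produces a positive power of $\widetilde{T}$, multiplied by $n\|\sigma\|_{\cC(\cdot;\cL_2(H,X_{2\beta}))}\|v_1-v_2\|_{L^{2}(\Omega;\cC(\cdot;Y))}$; squaring yields the claimed estimate. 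As throughout this section, the $\omega$-dependence of the constants from Theorems~\ref{po} and~\ref{estan} is absorbed by the localization \eqref{eq:stoptime}, exactly as in \eqref{est:u:stopped}. I expect the main obstacle to be precisely the simultaneous integrability of the nested singular kernels --- the $(t-s)^{-1}$ generated by $A_{v_j}(s)$ in the convolution and the $(t-r)^{\nu-1-\alpha}$, $(r-s)^{-1}$ generated by the iterated propagator/resolvent difference --- which is what forces the stronger spatial regularity $\sigma(\cdot)\in\cL_2(H,X_{2\beta})$ rather than merely $\cL_2(H,Z)$; a careful accounting of which fractional powers of $A_{v}(s)$ are spent on each factor is then needed to see that the surviving power of $\widetilde{T}$ is strictly positive, so that together with Lemmas~\ref{incond}--\ref{f} the map $\Phi$ becomes a contraction on $L^{2}(\Omega;\cC([0,\widetilde{T}\wedge\tau_{n}];Y))$ once $\widetilde{T}$ is small enough.
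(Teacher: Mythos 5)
Your proposal follows essentially the same route as the paper: the same splitting of $U^{v_1}A_{v_1}-U^{v_2}A_{v_2}$ into a propagator-difference term and a generator-difference term (you merely associate $A_{v_2}$ with the former where the paper uses $A_{v_1}$), the same variation-of-constants identity combined with (A3') and Yosida approximation, the same insertion of $A^{2\beta}A^{-2\beta}$ to trade the non-integrable $(r-s)^{-1}$ for $(r-s)^{2\beta-2}$ using $\sigma\in\mathcal{L}_2(H,X_{2\beta})$, and the same It\^o/BDG bound $\|J(\sigma)(t)-J(\sigma)(s)\|_{L^2(\Omega;X_{2\beta})}\lesssim (t-s)^{1/2}\|\sigma\|$, leading to the identical exponent $\widetilde{T}^{2(\nu-\alpha+2\beta)-1}$. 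Your remark that the endpoint $\beta=1/2$ needs separate care is a fair observation (the paper glosses over it, though in the SKT application $\beta>1/2$ strictly), but it does not change the substance of the argument.
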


\begin{proof}
Let $0\leq t \leq \widetilde{T}\wedge\tau_{n}$. Then
	\begin{align} 
	\label{stoch}
&	\int\limits_{0}^{t}(U^{v_{1}}(t,s)A_{v_{1}}(s) - U^{v_{2}}(t,s)
A_{v_{2}}(s)) \int\limits_{s}^{t}\sigma(r)~\txtd W(r)~\txtd s \nonumber \\
& = \int\limits_{0}^{t} (U^{v_{1}}(t,s)- U^{v_{2}}(t,s))  A_{v_{1}}(s) 
\int\limits_{s}^{t}\sigma(r)~\txtd W(r) ~\txtd s \nonumber\\
&	 +\int\limits_{0}^{t} U^{v_{2}}(t,s) (A_{v_{1}}(s) - A_{v_{2}}(s)) 
	\int\limits_{s}^{t}\sigma(r)~\txtd W(r) ~\txtd s\nonumber\\
& =: (e^{v_{1}}(t) - e^{v_{2}} (t) ) + (f^{v_{1}}(t) - f^{v_{2}}(t) ).
\end{align}
Recalling Lemma \ref{incond} we rewrite the first term as
\begin{align*}
&	\int\limits_{0}^{t} (U^{v_{1}}(t,s)-U^{v_{2}}(t,s )) 
A_{v_{1}}(s)\int\limits_{s}^{t} \sigma(r)~\txtd W(r)~\txtd s\\
 & = \int\limits_{0}^{t}  \int\limits_{s}^{t} U^{v_{2}}(t,q) 
A_{v_{2}}(q) (A^{-1}_{v_{1}}(q)- A^{-1}_{v_{2}}(q) ) 
A_{v_{1}}(q) U^{v_{1}}(q,s) A^{1-2\beta}_{v_{1}}(s)~\txtd 
q ~A^{2\beta}_{v_{1}}(s) \int\limits_{s}^{t} \sigma(r)~\txtd 
W(r) ~\txtd s.
 \end{align*}
Let $0\leq \theta<\nu$. Regarding that
\begin{align*}
&\left\|\int\limits_{0}^{t} (U^{v_{1}}(t,s) 
- U^{v_{2}}(t,s )) A_{v_{1}}(s) \int\limits_{s}^{t}\sigma(r)~\txtd 
W(r) ~\txtd s\right\|\\
&= \Bigg\|\int\limits_{0}^{t} \int\limits_{s}^{t}  U^{v_{2}}(t,\tau) A_{v_{2}}(\tau) (A_{v_{1}}^{-1}(\tau) - A_{v_{2}}^{-1} (\tau) ) A_{v_{1}}(\tau) U^{v_{1}}(\tau,s) A_{v_{1}}(s) A^{-2\beta}_{v_{1}}(s) d\tau A^{2\beta}_{v_{1}}(s) \int\limits_{s}^{t} \sigma(r)dW(r) ds \Bigg\|\\
\end{align*}
 we get
\begin{align*}
 &\left\|\int\limits_{0}^{t} A^{\theta}_{v_{1}}(t) (U^{v_{1}}(t,s) 
- U^{v_{2}}(t,s )) A_{v_{1}}(s) \int\limits_{s}^{t}\sigma(r)~\txtd 
W(r) ~\txtd s\right\|_{L^{2}(\Omega;X)}\\
 	 & \leq C n \int \limits_{0}^{t} (t-s) ^{\nu -\theta+2\beta +1/2-2}
	~\txtd s ||\sigma ||_{\cC([0,t];\mathcal{L}_{2}(H, X_{2\beta}))} 
	|| v_{1}-v_{2}||_{ \cC([0,t];Y) }\\
 	 & \leq C nt^{\nu-\theta+2\beta -1/2} ||\sigma ||_{\cC([0,t];
	\mathcal{L}_{2}(H, X_{2\beta}))} || v_{1}-v_{2}||_{ \cC([0,t];Y) }.
\end{align*}

Consequently,
\begin{align}
 & \mathbb{E} \left[\sup\limits_{0\leq t \leq \widetilde{T}\wedge
\tau_{n} } \left \| \int\limits_{0}^{t} A^{\theta}_{v_{1}}(t)(
U^{v_{1}}(t,s)-U^{v_{2}}(t,s )) A_{v_{1}}(s)\int\limits_{s}^{t} 
\sigma(r)~\txtd W(r)~\txtd s\right\|^{2}_{X} \right]\nonumber \\ 
 &  \leq C \mathbb{E} \left[\int \limits_{0}^{\widetilde{T}\wedge
\tau_{n} } || A^{\theta}_{v_{1}}(t)(U^{v_{1}}(t,s)-U^{v_{2}}(t,s )) 
A_{v_{1}}(s)\int\limits_{s}^{t} \sigma(r)~\txtd W(r)||^{2}_{X}~
\txtd s\right] \nonumber \\
  & \leq C n^{2}\widetilde{T} ^{2 (\nu -\theta+2\beta) -1}  
	||\sigma ||^{2}_{\cC([0,\widetilde{T}\wedge\tau_{n}];\mathcal{L}_{2}
	(H,X_{2\beta}))} \mathbb{E}\left[||v_{1} -v_{2}||^{2}_{\cC([
	0,\widetilde{T}\wedge\tau_{n}];Y ) }\right]. \label{sigma1}
\end{align} 
 	
Using that $A^{1-2\beta}_{v_{1}}\in\mathcal{L}(X)$,
we estimate the second term in (\ref{stoch}) as
\begin{align*}
 	 &\left\|\int\limits_{0}^{t}A^{\theta}_{v_{2}}(t) U^{v_{2}}(t,s) 
	(A_{v_{1}}(s) - A_{v_{2}}(s)) \int\limits_{s}^{t}\sigma(r)~\txtd 
	W(r) ~\txtd s\right\|_{L^{2}(\Omega;X)} \\
 	 & \leq C n t^{\nu- \theta +1/2} ||\sigma||_{\cC([0,t];\mathcal{L}_{2}
	(H, X_{2\beta})) } ||v_{1}-v_{2}||_{\cC([0,t];Y) }.
\end{align*}
Summarizing the previous calculations, we obtain
\begin{align}
\label{sigma2}
 & \mathbb{E}\left[\sup\limits_{0\leq t \leq \widetilde{T}
\wedge\tau_{n}} \left\|\int\limits_{0}^{t}A^{\theta}_{v_{2}}(t) 
U^{v_{2}}(t,s) (A_{v_{1}}(s) - A_{v_{2}}(s)) \int\limits_{s}^{t}
\sigma(r)~\txtd W(r) ~\txtd s\right\|^{2}_{X}\right] \\
  & \leq C n^{2} \widetilde{T}^{2(\nu-\theta)+1} ||\sigma||^{2}_{
	\cC([0,\widetilde{T}\wedge\tau_{n}];\mathcal{L}_{2}(H, 
	X_{2\beta}))  } \mathbb{E}\left[||v_{1} -v_{2}||^{2}_{
	\cC([0,\widetilde{T}\wedge\tau_{n}];Y ) }\right] \nonumber.
\end{align}
Setting $\theta:=\alpha$ in (\ref{sigma1}) and (\ref{sigma2}), the 
triangle-inequality in $L^{2}(\Omega;Y)$ proves the statement.
 \qed
\end{proof}\\

Collecting all these results finally yields.

\begin{lemma}\label{Phi}
The mapping $\Phi:\cK\ra \cK$ is a contraction for a sufficiently 
small $\widetilde{T}$.
\end{lemma}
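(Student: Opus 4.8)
The plan is to assemble the four estimates already established in Lemmas~\ref{incond}, \ref{stoch1}, \ref{f} and \ref{stochconv} for the pieces $a^{v_1}-a^{v_2}$, $b^{v_1}-b^{v_2}$, $c^{v_1}-c^{v_2}$ and $d^{v_1}-d^{v_2}$ of the difference $\Phi(v_1)-\Phi(v_2)=u_1-u_2$. Since $\Phi$ maps $\cK$ into itself by Lemma~\ref{sa}, it remains to estimate $u_1-u_2$ in the norm of $L^{2}(\Omega;\cC([0,\widetilde{T}\wedge\tau_n];Y))$. Applying the triangle inequality in that space to
$$u_1-u_2=(a^{v_1}-a^{v_2})+(b^{v_1}-b^{v_2})+(c^{v_1}-c^{v_2})-(d^{v_1}-d^{v_2})$$
and taking the square root of each of the four lemmas, I obtain
$$\|\Phi(v_1)-\Phi(v_2)\|_{L^{2}(\Omega;\cC([0,\widetilde{T}\wedge\tau_n];Y))}\le C\,\kappa(\widetilde{T})\,\|v_1-v_2\|_{L^{2}(\Omega;\cC([0,\widetilde{T}\wedge\tau_n];Y))},$$
where $C=C\big(n,R,\|f\|_{\cC^\delta([0,T];X)},\|\sigma\|_{\cC([0,T];\mathcal{L}_2(H,X_{2\beta}))}\big)$ collects the fixed data (absorbing the factor $e^{Tn^{1/\delta}}$ of~\eqref{est:u:stopped}), and
$$\kappa(\widetilde{T}):=\widetilde{T}^{\nu+\beta-\alpha-1}+\widetilde{T}^{\nu+\beta-\alpha-\frac12}+\widetilde{T}^{\nu-\alpha}+\widetilde{T}^{\nu-\alpha+2\beta-\frac12}.$$

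Next I would check that every exponent in $\kappa$ is strictly positive. Indeed $\nu+\beta-\alpha-1>0$ is exactly the standing assumption $\beta+\nu>1+\alpha$ of (A4'), whence also $\nu+\beta-\alpha-\frac12>0$; while $\nu-\alpha>0$ follows from $\alpha<\beta<\nu$ in (A4'), and then $\nu-\alpha+2\beta-\frac12\ge\nu-\alpha+\frac12>0$ since $\beta\ge\frac12$. Hence $\kappa(\widetilde{T})\to0$ as $\widetilde{T}\searrow0$. The prefactor $C$ depends on $\widetilde{T}$ only through the factor $e^{\widetilde{T}n^{1/\delta}}$, which is non-decreasing in $\widetilde{T}$ and bounded by $e^{Tn^{1/\delta}}$ on $[0,T]$; therefore one may fix $\widetilde{T}\in(0,T]$ small enough that $C\,\kappa(\widetilde{T})\le\frac12<1$. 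For such $\widetilde{T}$ the map $\Phi$ is a strict contraction on $\cK$, which is the assertion; combined with Lemma~\ref{sa} and the Banach fixed point theorem (on the closed subset $\cK$ of $L^{2}(\Omega;\cC([0,\widetilde{T}\wedge\tau_n];Y))$) it then yields the unique local pathwise mild solution asserted in Theorem~\ref{sol}.

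The substantive analytic work --- the representation~\eqref{estu} of the difference of two evolution families, its estimation via the local Lipschitz bound~\eqref{nu} and the parabolic smoothing estimates~\eqref{an}--\eqref{an2}, and the handling of the generalized stochastic convolution, which is what forces the higher space regularity $\sigma\in\mathcal{L}_2(H,X_{2\beta})$ --- has already been carried out in Lemmas~\ref{incond}--\ref{stochconv}, so the present lemma presents no genuine obstacle. The one point that deserves attention is the bookkeeping of constants: one must keep the $\widetilde{T}$-dependence of the prefactor $C$ (entirely contained in the factor $e^{\widetilde{T}n^{1/\delta}}$) separate from the explicit positive powers of $\widetilde{T}$ gathered in $\kappa$, so that shrinking $\widetilde{T}$ genuinely drives the contraction constant below $1$ and is not counteracted by a blow-up of $C$. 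Since that $\widetilde{T}$-dependent factor is monotone and uniformly bounded on $[0,T]$, this causes no difficulty.
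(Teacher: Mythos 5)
Your proposal is correct and follows essentially the same route as the paper: the paper's proof of this lemma likewise just sums the four estimates from Lemmas~\ref{incond}, \ref{stoch1}, \ref{f} and \ref{stochconv} and chooses $\widetilde{T}$ small. Your explicit verification that all the exponents of $\widetilde{T}$ are strictly positive under (A4'), and your remark that the $\widetilde{T}$-dependence hidden in the constant $C$ via $e^{\widetilde{T}n^{1/\delta}}$ is monotone and bounded, make explicit two points the paper leaves implicit.
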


\begin{proof}
We obtained that
\begin{align*}
& \mathbb{E}\left[||u_{1} - u_{2}||^{2}_{\cC([0,\widetilde{T}
\wedge\tau_{n}];Y)}\right] \leq  C (Rn)^{2} \widetilde{T}^{2(
\nu+\beta-\alpha-1)} \mathbb{E} \left[|| v_{1} -v_{2}||^{2}_{\cC([0,\widetilde{T}\wedge\tau_{n}];Y)}\right]\\
& + C n^{2}\widetilde{T}^{2(\nu-\alpha+\beta) -1 } ||\sigma||^{2}_{
\cC([0,\widetilde{T}\wedge\tau_{n}];\mathcal{L}_{2}(H,Z))} 
\mathbb{E} \left[||v_{1} -v_{2} ||^{2}_{\cC([0,
\widetilde{T}\wedge\tau_{n}];Y) }\right]\\
& + Cn^{2} \widetilde{T}^{2 (\nu-\alpha)} ||f ||^{2}_{
\cC([0,\widetilde{T}\wedge\tau_{n}];X)} \mathbb{E}\left[
||v_{1} -v_{2}||^{2}_{\cC([0,\widetilde{T}\wedge\tau_{n}];
Y)}\right]\\
& +  C n^{2} \widetilde{T} ^{2(\nu -\alpha+2\beta) -1} ||\sigma ||^{2}_{\cC([
0,\widetilde{T}\wedge\tau_{n}];\mathcal{L}_{2}(H,X_{2\beta}))} 
\mathbb{E}	\left[||v_{1} -v_{2}|| ^{2}_{ \cC([0,\widetilde{T}
\wedge\tau_{n}];Y) }\right].
\end{align*}
Therefore, choosing $\widetilde{T}$ small enough we obtain that $\Phi$ is a contraction.
\qed
\end{proof}\\

 In conclusion, due to Lemma \ref{Phi}, Banach's fixed-point theorem proves Theorem \ref{sol}.\\

We now derive analogously to the proof of Theorem 1.3 in \cite{Kim1} assertions regarding the corresponding stopping times.
\begin{lemma}\label{ps1}\emph{(\textbf{Positivity of the stopping times})}
	Let $0<\varepsilon<1$ be fixed. Then under the assumptions of Theorem \ref{sol} we have
	\begin{align}
	\mathbb{P} (\tau_{n}>\varepsilon) > 1 -\varepsilon^{2} \Big(\widetilde{C}\mathbb{E}||u_{0}||^{2}_{Z} + \widetilde{C} \varepsilon||\sigma||^{2}_{\cC([0,\varepsilon];\mathcal{L}_{2}(H,Z))} + \widetilde{C} \varepsilon^{2(1-\beta)}||f||^{2}_{\cC^{\delta}([0,\varepsilon];X)}\Big),
	\end{align}
	where the positive constant $\widetilde{C}=\widetilde{C}(\delta)$ is independent of $\varepsilon$ and $u_{0}$.
\end{lemma}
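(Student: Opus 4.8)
The plan is to run the standard argument showing that the solution cannot leave a ball in $Z$ in arbitrarily short time (compare~\cite[Theorem~1.3]{Kim1}): convert the statement about $\tau_{n}$ into a second‑moment tail estimate for $\sup_{t}\|u(t)\|_{Z}$ and then invoke the mean‑square a priori bound of Lemma~\ref{meansquare} on the shorter interval $[0,\varepsilon]$. We may take $\varepsilon\le\widetilde T$, since for larger $\varepsilon$ the same estimate is propagated along consecutive subintervals of length $\widetilde T$ by the cocycle identity for $U^{u}$; for $\varepsilon\le\widetilde T$ the solution furnished by Theorem~\ref{sol} is defined on the whole of $[0,\varepsilon\wedge\tau_{n}]$.

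First I would reduce to a supremum bound. Since $\tau_{n}=\inf\{t\ge 0:\|u(t)\|_{Z}\ge n\}$, on $\{\tau_{n}\le\varepsilon\}$ there are times arbitrarily close to $\tau_{n}$ at which $\|u(\cdot)\|_{Z}\ge n$; using that the trajectories of $u$ are a.s.\ bounded in $Z$ and continuous in $Z$ on $[0,\tau_{n})$ — the latter being read off from the mild representation~\eqref{eq:magain} exactly as in the proof of Lemma~\ref{sa} — one obtains
\[
\{\tau_{n}\le\varepsilon\}\ \subseteq\ \Big\{\sup_{0\le t\le\varepsilon\wedge\tau_{n}}\|u(t)\|_{Z}\ge n\Big\},
\]
so that Chebyshev's inequality reduces the proof to an estimate for $\mathbb{E}\big[\sup_{0\le t\le\varepsilon\wedge\tau_{n}}\|u(t)\|_{Z}^{2}\big]$.

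For this second‑moment bound I would rerun the term‑by‑term computation of Lemma~\ref{meansquare} on $[0,\varepsilon]$, but now keeping the initial term in the form $\|U^{v}(t,0)u_{0}\|_{Z}\le C\|u_{0}\|_{Z}$ (as in Lemma~\ref{sa}) rather than passing to $U^{v}(t,0)-\mathrm{Id}$, so that the deterministic radius $R$ does not enter. Invoking~\eqref{an}--\eqref{an2}, the H\"older/Sobolev regularity of $J(\sigma)$ from Proposition~\ref{reg}, the bound $\|A_{v}(s)U^{v}(t,s)\|_{\mathcal L(Z,X)}\lesssim (t-s)^{-1}$ and the Burkholder--Davis--Gundy inequality exactly as in Lemmas~\ref{sa} and~\ref{meansquare}, one arrives at
\[
\mathbb{E}\Big[\sup_{0\le t\le\varepsilon\wedge\tau_{n}}\|u(t)\|_{Z}^{2}\Big]\ \le\ \widetilde C\,\mathbb{E}\|u_{0}\|_{Z}^{2}+\widetilde C\,\varepsilon\,\|\sigma\|_{\cC([0,\varepsilon];\mathcal{L}_{2}(H,Z))}^{2}+\widetilde C\,\varepsilon^{2(1-\beta)}\,\|f\|_{\cC^{\delta}([0,\varepsilon];X)}^{2},
\]
where $\widetilde C$ gathers the constants coming from Theorem~\ref{estan}, Proposition~\ref{reg} and the Burkholder--Davis--Gundy inequality; these depend only on $\delta$, on the sectoriality data in (A1$'$)--(A4$'$) and, via the localization at $\tau_{n}$ and~\eqref{est:u:stopped}, on $n$ — but neither on $u_{0}$ nor on the horizon $\varepsilon\le\widetilde T$. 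Combining this with the Chebyshev estimate of the previous step and passing to complements yields the stated lower bound for $\mathbb{P}(\tau_{n}>\varepsilon)$.

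The main obstacle is not the calculation — that is essentially a rerun of Lemmas~\ref{sa} and~\ref{meansquare} on the interval $[0,\varepsilon]$ — but rather two bookkeeping points: (i) justifying the event inclusion in the reduction step even though the $Z$‑trajectories are a priori only bounded, which forces one to first establish their $Z$‑continuity on $[0,\tau_{n})$ from the mild formula; and (ii) tracking the constants carefully enough to see that $\widetilde C$ is genuinely free of $\varepsilon$ and of $u_{0}$, which amounts to checking that in each estimate of Lemma~\ref{meansquare} the explicit powers of the time horizon have been pulled out, leaving a prefactor that is a pure function of the structural data (and of $n$).
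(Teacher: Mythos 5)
Your overall strategy coincides with the paper's: the event inclusion relating $\{\tau_{n}\le\varepsilon\}$ to the exceedance of $\sup_{0\le t\le\varepsilon\wedge\tau_{n}}\|u(t)\|_{Z}$, Chebyshev's inequality, and a rerun of the mean-square estimates of Lemma~\ref{meansquare} on $[0,\varepsilon]$. There is, however, a genuine gap at the final step. Chebyshev applied at level $n$ produces the factor $1/n^{2}$, not $\varepsilon^{2}$: what you actually obtain is
\begin{equation*}
\mathbb{P}(\tau_{n}\le\varepsilon)\;\le\;\frac{1}{n^{2}}\,\mathbb{E}\Big[\sup_{0\le t\le\varepsilon\wedge\tau_{n}}\|u(t)\|_{Z}^{2}\Big],
\end{equation*}
and for an $n$ unrelated to $\varepsilon$ this does not yield the claimed bound $1-\varepsilon^{2}(\cdots)$. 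The missing idea is the coupling of $n$ to $\varepsilon$: the paper first chooses the integer $n$ with $\tfrac{1}{n+1}\le\varepsilon<\tfrac{1}{n}$, so that $1/n^{2}\le 4\varepsilon^{2}$ and the Chebyshev prefactor can be traded for $\varepsilon^{2}$. The lemma is really a statement about this particular $n=n(\varepsilon)$; without that choice the asserted inequality simply does not follow from the estimates you list, and your closing sentence (``combining \dots yields the stated lower bound'') is not justified.

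A secondary point, tied to your item (ii): the constant coming out of the mean-square estimate is not independent of the interplay between $n$ and $\varepsilon$. Via \eqref{est:u:stopped} it carries a factor of the form $e^{n^{2/\delta}\varepsilon}$ (the paper's proof exhibits $\widetilde{C}e^{n^{2/\delta}\widetilde{T}}$ and then the combined prefactor $e^{\varepsilon n^{2/\delta}}/n^{2}$), and it is again only through the relation $\varepsilon<1/n$ that the paper argues this can be absorbed into $\widetilde{C}\varepsilon^{2}$ with $\widetilde{C}=\widetilde{C}(\delta)$. Your bookkeeping, which keeps $n$ and $\varepsilon$ independent and declares $\widetilde{C}$ free of $\varepsilon$ while dependent on $n$, does not close this loop. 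The remaining ingredients of your argument --- the reduction to the supremum, the a.s.\ boundedness/continuity of the trajectories needed for the event inclusion, and the term-by-term estimates following Lemmas~\ref{sa} and~\ref{meansquare} --- do match the paper's proof.
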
	
\begin{proof}
For $0<\varepsilon<1$ there exists a positive number $n$ such that
\begin{align*}
\frac{1}{n+1}\leq \varepsilon<\frac{1}{n}.
\end{align*}
Applying Theorem \ref{sol} we obtain that $(u,\widetilde{T}\wedge\tau_{n})$ is the pathwise mild solution of (\ref{cp1}). Our aim is to derive estimates for
\begin{align}\label{e}
\mathbb{E} \sup\limits_{0\leq t \leq \varepsilon} ||u(t\wedge\tau_{n})||^{2}_{Z}.
\end{align}
Using the definition of $\tau_{n}$ we know that
\begin{align}\label{fuerp}
\left\{\omega:\sup\limits_{0\leq t \leq \varepsilon} ||u(t\wedge\tau_{n})||_{Z} <n \right\} \subset \{\omega:\tau_{n}(\omega) >\varepsilon\}.
\end{align}
Therefore, if we show (\ref{e}), Chebyshev's inequality proves the statement. \\

By the same computation as performed in Lemma \ref{meansquare} we have
\begin{align*}
\mathbb{E} \sup\limits_{0\leq t \leq \widetilde{T}\wedge\tau_{n}} ||U^{u}(t,0) u_{0}||^{2}_{Z} \leq \widetilde{C} e ^{n^{2/\delta}\widetilde{T}} \mathbb{E}||u_{0}||^{2}_{Z}.
\end{align*}
Furthermore,
\begin{align*}
\mathbb{E}\sup\limits_{0\leq t \leq \widetilde{T}\wedge\tau_{n} } ||U^{u}(t,0) \int\limits_{0}^{t} \sigma(r)~\txtd W(r) ||^{2}_{Z} \leq  \widetilde{C} e ^{n^{2/\delta}\widetilde{T}} \widetilde{T} ||\sigma||^{2}_{\cC([0,\widetilde{T}];\mathcal{L}_{2}(H,Z))}
\end{align*}
and 

\begin{align*}
&\mathbb{E}\left[\sup\limits_{0\leq t \leq \widetilde{T}\wedge\tau_{n} }
\left\| \int\limits_{0}^{t} U^{v}(t,s)A_{v}(s)\int\limits_{s}^{t} \sigma(r) 
~\txtd W(r)~\txtd s \right\|^{2}_{Z} \right]\\
& \leq C \mathbb{E} \int \limits_{0}^{\widetilde{T}\wedge\tau_{n}}  
|| U^{v}(t,s) A_{v}(s) \int\limits_{s}^{t}
\sigma(r)~\txtd W(r)||^{2}_{Z}~\txtd s
\leq \widetilde{C} e ^{n^{2/\delta}\widetilde{T}} \widetilde{T} ||\sigma ||^{2}_{\cC([0,\widetilde{T}];\mathcal{L}_{2}(H,Z))}.
\end{align*}
This means that
\begin{align*}
\mathbb{E} \sup\limits_{0\leq t \leq \widetilde{T}\wedge\tau_{n}} ||u(t)||^{2}_{Z} &\leq \widetilde{C} e ^{n^{2/\delta}\widetilde{T}} \left(\mathbb{E} ||u_{0}||^{2}_{Z} +  \widetilde{T}||\sigma||^{2}_{\cC([0,\widetilde{T}];\mathcal{L}_{2}(H,Z))}
 +  \widetilde{T}^{2(1-\beta)} ||f||^{2}_{\cC^{\delta}([0,\widetilde{T}];X)} \right),
\end{align*}
which implies

\begin{align}\label{b:stopping}
\mathbb{E} \sup\limits_{0\leq t \leq \varepsilon} ||u(t\wedge\tau_{n})||^{2}_{Z} &\leq \widetilde{C} e ^{n^{2/\delta}\varepsilon} \left( \mathbb{E} ||u_{0}||^{2}_{Z} + \varepsilon ||\sigma||^{2}_{\cC([0,\varepsilon];\mathcal{L}_{2}(H,Z))}  +   \varepsilon^{2(1-\beta)} ||f||^{2}_{\cC^{\delta}([0,\varepsilon];X)} \right).
\end{align}

From (\ref{fuerp}) and Chebyshev's inequality we have that
\begin{align*}
\mathbb{P} (\tau_{n}>\varepsilon)& >\mathbb{P} (\sup\limits_{0\leq t \leq \varepsilon} ||u(t\wedge\tau_{n})||_{Z} <n ) \geq 1- \frac{1}{n^{2}} \mathbb{E} \sup\limits_{0\leq t \leq\varepsilon} ||u(t\wedge\tau_{n})||^{2}_{Z}\\
& \geq 1 - \widetilde{C}\frac{e^{\varepsilon n^2/\delta}}{n^{2}} \left( \mathbb{E}||u_{0}||^{2}_{Z} + \varepsilon||\sigma||^{2}_{\cC([0,\varepsilon];\mathcal{L}_{2}(H,Z))} + \varepsilon^{2(1-\beta)}||f||^{2}_{\cC^{\delta}([0,\varepsilon];X)} \right)\\
&  \geq 1 -\varepsilon^{2} \Big(\widetilde{C}\mathbb{E}||u_{0}||^{2}_{Z} + \widetilde{C} \varepsilon||\sigma||^{2}_{\cC([0,\varepsilon];\mathcal{L}_{2}(H,Z))} + \widetilde{C} \varepsilon^{2(1-\beta)}||f||^{2}_{\cC^{\delta}([0,\varepsilon];X)}\Big).
\end{align*}
	\qed
	\end{proof}
		

As already discussed, the next step is to extend the results established 
in Theorem \ref{sol} and include nonlinearities of semilinear type. More 
precisely, we consider
\begin{equation}\label{cp2}
\begin{cases}
	~\txtd u(t)= \left[(Au(t))(u(t)) + F(t,u(t))\right] ~\txtd t 
	+ \sigma(t,u(t)) ~\txtd W(t), ~ t\in[0,T]\\
	u(0)=u_{0} \in K \mbox{ a.s.}
\end{cases}
\end{equation}
and make standard local Lipschitz and growth assumptions on 
$F:\Omega\times [0,T] \times X \to X$ and $\sigma:\Omega\times [0,T] \times X\to\mathcal{L}_{2}(H,X_{2\beta})$.
In particular, we assume that there exist constants $L_{F}=L_{F}(n), l_{F}=l_{F}(n), 
L_{\sigma}=L_{\sigma}(n), l_{\sigma}=l_{\sigma}(n)>0$ such that 
\begin{equation}\label{lip:f}
||F(u)-F(v) ||_{X} \leq  L_{F}|| u-v||_{X} , 
\mbox{  } ||u||_{Z}\leq n , ||v||_{Z}\leq n,
\end{equation}
\begin{equation} \label{growth:f}
|| F(u)||_{X}\leq l_{F} (1+ ||u||_{X}), \mbox{ } 
||u||_{Z}\leq n,
\end{equation}
and respectively
\begin{equation}\label{lip:sigma}
||\sigma(u) - \sigma(v) ||_{\mathcal{L}_{2} (H,X_{2\beta})}
\leq  L_{\sigma}|| u-v||_{X}, \mbox{ }||u||_{Z}\leq n, ||v||_{Z}\leq n ,
\end{equation}
\begin{equation}\label{growth:sigma}
||\sigma(u)||_{\mathcal{L}_{2}(H,X_{2\beta})} \leq l_{\sigma}
 (1+||u||_{X} ), \mbox{ } ||u||_{Z}\leq n.
\end{equation}

We remark that since $\mathcal{L}_{2}(H, X_{2\beta})\hookrightarrow 
\mathcal{L}_{2}(H,X_{\beta})$ and $Y\hookrightarrow X$ we also get
\begin{align}
||\sigma(u)-\sigma(v) ||_{\mathcal{L}_{2}(H,X_{\beta}) }  
\leq C L_{\sigma} ||\sigma(u)-\sigma(v) ||_{\mathcal{L}_{2}
	(H,X_{2\beta}) } \leq CL_{\sigma}  || u-v ||_{X}\leq CL_{\sigma} 
|| u-v||_{Y}.
\end{align}
We can now state our main result of this work.

\begin{theorem}
\label{multiplicative}
The quasilinear SPDE (\ref{cp2}) possesses a unique local pathwise 
mild solution $u\in L^{0}(\Omega;\cB([0,\widetilde{T}\wedge\tau_{n}];Z))
\cap L^{0}(\Omega; \cC^{\delta}([0,\widetilde{T}\wedge\tau_{n}];Y))$ 
given by
\begin{align}
u(t)&=U^{u}(t,0) u_{0} +  U^{u}(t,0) \int\limits_{0}^{t}
\sigma(r,u(r))~\txtd W(r) + \int\limits_{0}^{t} U^{u}(t,s)
f(s,u(s))~\txtd s\nonumber\\
& -\int\limits_{0}^{t} U^{u}(t,s) A(u(s)) \int\limits_{s}^{t} 
\sigma(r,u(r))~\txtd W(r)~\txtd s\label{u2}.
\end{align}
\end{theorem}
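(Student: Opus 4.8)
The plan is to reduce \eqref{cp2} to a fixed-point problem of exactly the kind already solved in Theorem~\ref{sol}, the only new ingredient being the state-dependence of the drift $F$ and the diffusion $\sigma$. Fix the cut-off level $n$ together with the stopping time $\tau_n$ from \eqref{eq:stoptime}, and keep $R,r,k,\delta$ as in Definition~\ref{def:p}. For $v\in\mathcal{K}$ put $f_v(t):=F(t,v(t))$ and $\sigma_v(t):=\sigma(t,v(t))$, freeze the generator along $v$ as $A_v(t):=A(v(t))$, and define $\Psi(v):=u$, where $u$ is the pathwise mild solution, supplied by \cite[Theorem~5.3]{PronkVeraar}, of the \emph{linear} random Cauchy problem
\begin{equation*}
\txtd u(t)=\big[A_v(t)u(t)+f_v(t)\big]\,\txtd t+\sigma_v(t)\,\txtd W(t),\qquad u(0)=u_0 ,
\end{equation*}
so that $u=\Psi(v)$ is given by formula \eqref{eq:magain} with $f,\sigma$ replaced by $f_v,\sigma_v$. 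A fixed point $u=\Psi(u)$ then satisfies precisely \eqref{u2} and is therefore a local pathwise mild solution of \eqref{cp2}; conversely, any such solution is a fixed point of $\Psi$. Hence it suffices to rerun the fixed-point scheme of Lemmas~\ref{sa}--\ref{Phi} for $\Psi$ on $\mathcal{K}$.

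First I would verify that $\Psi$ is well defined, i.e.\ that $f_v$ and $\sigma_v$ carry the pathwise regularity required to invoke \cite[Theorem~5.3]{PronkVeraar} and the estimates of Section~\ref{qspde}. Since $v\in\mathcal{K}$ is a.s.\ bounded in $Z$ (hence in $X$) by a constant depending only on $n,R$ and is $\mathcal{C}^{\delta}$ in $Y\hookrightarrow X$, the local Lipschitz and growth bounds \eqref{lip:f}--\eqref{growth:sigma} give $f_v\in L^{0}(\Omega;\mathcal{C}^{\delta}([0,\widetilde T\wedge\tau_n];X))$ and $\sigma_v\in L^{0}(\Omega;\mathcal{C}([0,\widetilde T\wedge\tau_n];\mathcal{L}_2(H,X_{2\beta})))$, with norms bounded by constants of the form $C(n,R,l_F,l_\sigma)$; adaptedness is clear. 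All the estimates of Lemmas~\ref{sa} and \ref{meansquare} then apply verbatim with $f_v,\sigma_v$ in the roles of $f,\sigma$, and yield
\begin{equation*}
\sup_{0\le t\le\widetilde T\wedge\tau_n}\|\Psi(v)(t)-u_0\|_{Z}+\|\Psi(v)\|_{\mathcal{C}^{\delta}([0,\widetilde T\wedge\tau_n];Y)}\le C(n,R)\big(\|u_0\|_{Z}+\widetilde T^{\eta}\big)
\end{equation*}
for some $\eta>0$; shrinking $\widetilde T$ thus forces $\|\Psi(v)(t)-u_0\|_Z\le r$ and the Hölder bound $\le k$ a.s., i.e.\ $\Psi(v)\in\mathcal{K}$. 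This is where the smallness of $\widetilde T$ first enters and it uses only the growth half of the assumptions.

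Next I would establish that $\Psi$ is a contraction on $\mathcal{K}$ in the $L^{2}(\Omega;\mathcal{C}([0,\widetilde T\wedge\tau_n];Y))$-norm. Writing $u_j=\Psi(v_j)$ and subtracting, $u_1-u_2$ splits into the four terms governed by $U^{v_1}-U^{v_2}$ and $A_{v_1}-A_{v_2}$ — handled exactly as in Lemmas~\ref{incond}, \ref{stoch1}, \ref{f}, \ref{stochconv} with the now-bounded coefficients $f_{v_1},\sigma_{v_1}$ — plus the genuinely new contributions $U^{v_1}(t,0)\int_0^t(\sigma_{v_1}-\sigma_{v_2})\,\txtd W$, $\int_0^t U^{v_1}(t,s)(f_{v_1}(s)-f_{v_2}(s))\,\txtd s$, and $\int_0^t U^{v_1}(t,s)A_{v_1}(s)\int_s^t(\sigma_{v_1}-\sigma_{v_2})\,\txtd W\,\txtd s$. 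For the first I would use $\|A^{\beta}_{v_1}(t)U^{v_1}(t,0)A^{-\beta}_{v_1}(0)\|_{\mathcal{L}(X)}\le C$, the one-sided stochastic integral bound, and \eqref{lip:sigma}, gaining $\widetilde T^{1/2}$; for the second, \eqref{an}, \eqref{lip:f} and $Y\hookrightarrow X$, gaining $\widetilde T^{1-\beta}$; for the third, the computation of Lemma~\ref{stochconv} together with $\mathcal{L}_2(H,X_{2\beta})\hookrightarrow\mathcal{L}_2(H,X_{\beta})$, gaining a further positive power of $\widetilde T$. Collecting everything as in Lemma~\ref{Phi} yields $\mathbb{E}\|u_1-u_2\|^{2}_{\mathcal{C}([0,\widetilde T\wedge\tau_n];Y)}\le C(n,R,L_F,L_\sigma,\dots)\,\widetilde T^{\kappa}\,\mathbb{E}\|v_1-v_2\|^{2}_{\mathcal{C}([0,\widetilde T\wedge\tau_n];Y)}$ with $\kappa>0$, and choosing $\widetilde T$ small makes the constant $<1$. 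Since $\mathcal{K}$ is closed in the Banach space $L^{2}(\Omega;\mathcal{C}([0,\widetilde T\wedge\tau_n];Y))$, Banach's theorem gives a unique fixed point $u$, which by \cite[Theorem~5.3]{PronkVeraar} satisfies \eqref{u2} and lies in $L^{0}(\Omega;\mathcal{B}([0,\widetilde T\wedge\tau_n];Z))\cap L^{0}(\Omega;\mathcal{C}^{\delta}([0,\widetilde T\wedge\tau_n];Y))$; uniqueness as a pathwise mild solution follows by comparing any two solutions on $[0,\tau\wedge\widetilde\tau\wedge\tau_n]$ via the same contraction estimate and a standard continuation argument, cf.\ \cite[Section~3]{BrzezniakHausenblasRazafimandimby}.

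The step I expect to be the main obstacle is not the contraction algebra — which is a routine combination of Lemmas~\ref{incond}--\ref{stochconv} with \eqref{lip:f}--\eqref{lip:sigma} — but the verification that $t\mapsto F(t,v(t))$ and $t\mapsto\sigma(t,v(t))$ genuinely inherit the pathwise time-regularity demanded in Section~\ref{qspde} (Hölder-$\mathcal{C}^{\delta}$ into $X$ for the drift, continuity into the \emph{smaller} space $X_{2\beta}$ for the noise) from only local Lipschitz and growth control in the state variable; making this rigorous requires either suitable joint $(t,x)$-regularity hypotheses on $F,\sigma$ or the relaxation of the drift requirement to continuity into some $X_{\hat\rho}$ as in the Remark following Lemma~\ref{f}. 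A secondary subtlety is that, through \eqref{est:u:stopped}, every constant above depends on the cut-off level $n$ via a factor $e^{\widetilde T n^{1/\delta}}$, so $\widetilde T$ must be fixed only after $n$; this is harmless for local existence but is precisely what obstructs passing to a global-in-time solution.
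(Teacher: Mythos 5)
Your proposal is correct, but it organizes the fixed-point argument differently from the paper. You freeze \emph{everything} at $v\in\mathcal{K}$ --- the generator $A_v$, the drift $f_v$ and the noise coefficient $\sigma_v$ --- and define $\Psi(v)$ as the pathwise mild solution of the resulting \emph{linear} nonautonomous random problem via \cite[Theorem~5.3]{PronkVeraar}, so that every term in $\Psi(v_1)-\Psi(v_2)$ is controlled directly by $\|v_1-v_2\|_{\cC([0,\widetilde T\wedge\tau_n];Y)}$. The paper instead freezes only the semilinear data: it sets $f_v,\sigma_v$ as you do but then invokes Theorem~\ref{sol} as a black box to solve the \emph{quasilinear} problem \eqref{qv} (whose generator is $A(u)$, not $A(v)$), and runs a second, outer fixed point $\Phi(v):=u$. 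As a consequence, in the paper's contraction estimate the terms $\widehat a,\widehat b,\widehat d,\widehat g,\widehat h$ in \eqref{eq:defah} are bounded by $\mathbb{E}\|u_1-u_2\|^2$ (the outputs) while only $\widehat c,\widehat e,\widehat f$ are bounded by $\mathbb{E}\|v_1-v_2\|^2$, so one must additionally absorb the output terms into the left-hand side for small $\widetilde T$; your flat scheme avoids that absorption step at the price of not reusing Theorem~\ref{sol} as a module. Both maps have the same fixed-point equation, namely \eqref{u2}, and the estimates you list (Lemmas~\ref{incond}--\ref{stochconv} for the $U^{v_1}-U^{v_2}$ and $A_{v_1}-A_{v_2}$ pieces, plus \eqref{lip:f}--\eqref{lip:sigma} for the frozen-data differences) are exactly the ones the paper deploys. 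The regularity transfer you flag as the main obstacle is in fact immediate from the paper's hypotheses: since $v\in\cC^{\delta}([0,\widetilde T\wedge\tau_n];Y)$ and $Y\hookrightarrow X$, the state-Lipschitz bounds \eqref{lip:f} and \eqref{lip:sigma} give $f_v\in\cC^{\delta}$ in $X$ and $\sigma_v$ (H\"older) continuous in $\mathcal{L}_2(H,X_{2\beta})$ pathwise, provided $F$ and $\sigma$ carry no worse explicit time-dependence --- a point the paper glosses over as well.
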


We note that all the results from the linear case concerning the definition and regularity properties 
of the generalized stochastic convolution can be extended to the 
nonlinear setting as discussed in \cite[Section 5.1]{PronkVeraar}.\medskip

In order to prove Theorem \ref{multiplicative}, analogously to the 
linear case, we first let $v\in\mathcal{K}$ a.s.,~set 
$f_{v}(t):=F(t,v(t))$, $\sigma_{v}(t):=\sigma(t,v(t))$ and consider
the Cauchy problem
\begin{equation}\label{qv}
\begin{cases}
~\txtd u(t) =\left[(Au(t))(u(t)) + f_{v}(t)\right]~\txtd t 
+ \sigma_{v}(t) ~\txtd  W(t), ~ t\in[0,\widetilde{T}]\\
u(0)=u_{0}\in K \mbox{  a.s}.
\end{cases}
\end{equation}
Note that all the assumptions of Theorem \ref{sol} are satisfied. This means that the quasilinear inhomogenuous 
equation (\ref{qv}) possesses a unique pathwise mild solution 
$u\in L^{0}(\Omega;\cB([0,\widetilde{T}\wedge\tau_{n}];Z)) 
\cap L^{0}(\Omega; \cC^{\delta}([0,\widetilde{T}\wedge\tau_{n}];Y))$ 
such that
\begin{align*}
u(t)&=U^{u}(t,0)u_{0} + U^{u}(t,0)\int\limits_{0}^{t}
\sigma_{v}(r)~\txtd W(r) + \int\limits_{0}^{t}U^{u}(t,s)
f_{v}(s)~\txtd s\\ &-\int\limits_{0}^{t}U^{u}(t,s)A_{u}(s) 
\int\limits_{s}^{t}\sigma_{v}(r)~\txtd W(r)~\txtd s.
\end{align*}
In order to obtain a solution for (\ref{cp2}) by a fixed-point 
argument, we define just as before the mapping 
$$\Phi(v):=u, \mbox{ for } v\in \mathcal{K}. $$ 
One can show analogously to the proof of Lemma \ref{sa} that 
this maps $\mathcal{K}$ into itself if one chooses 
$\widetilde{T}$ small enough.\medskip

We now verify the contraction property with respect to the norm in $L^{2}(\Omega; \cC([0,\widetilde{T}\wedge\tau_{n}];Y))$. The computation relies 
on similar estimates as for (\ref{v}) combined with the local Lipschitz 
continuity and growth boundedness of $F$ and $\sigma$. 

\begin{lemma} 
The mapping $\Phi$ is a contraction if $\widetilde{T}$ is 
sufficiently small.
\end{lemma}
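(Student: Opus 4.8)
The plan is to run the fixed-point scheme of the linear case (Lemmas~\ref{incond}--\ref{Phi}) with the $v$-dependent data $f_{v}(t):=F(t,v(t))$ and $\sigma_{v}(t):=\sigma(t,v(t))$, and to control in addition the terms produced by this $v$-dependence. First I would collect the a priori bounds: for $v\in\mathcal{K}$ one has $\|v(t)\|_{Z}\leq\|u_{0}\|_{Z}+r<R$ a.s.\ on $[0,\widetilde{T}\wedge\tau_{n}]$, so the growth conditions (\ref{growth:f}), (\ref{growth:sigma}) bound $\|f_{v}\|_{\cC([0,\widetilde{T}\wedge\tau_{n}];X)}$ and $\|\sigma_{v}\|_{\cC([0,\widetilde{T}\wedge\tau_{n}];\mathcal{L}_{2}(H,X_{2\beta}))}$ by constants depending only on $n$, while (\ref{lip:f}) together with $Y\hookrightarrow X$ and $v\in\cC^{\delta}([0,\widetilde{T}\wedge\tau_{n}];Y)$ shows $f_{v}\in\cC^{\delta}([0,\widetilde{T}\wedge\tau_{n}];X)$. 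Hence Theorem~\ref{sol} applies to (\ref{qv}), so $\Phi(v)=u$ is well defined and enjoys the representation stated above.

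Next I would fix $v_{1},v_{2}\in\mathcal{K}$, set $u_{j}:=\Phi(v_{j})$, and split each of the four terms of $u_{1}-u_{2}$ into a part where only the evolution operators differ and a part where only the data differ, e.g.
\begin{align*}
U^{v_{1}}(t,0)\!\int_{0}^{t}\!\sigma_{v_{1}}\,\txtd W-U^{v_{2}}(t,0)\!\int_{0}^{t}\!\sigma_{v_{2}}\,\txtd W
&=\big(U^{v_{1}}(t,0)-U^{v_{2}}(t,0)\big)\!\int_{0}^{t}\!\sigma_{v_{1}}\,\txtd W\\
&\quad+U^{v_{2}}(t,0)\!\int_{0}^{t}\!(\sigma_{v_{1}}-\sigma_{v_{2}})\,\txtd W,
\end{align*}
and similarly for the drift convolution and the generalized stochastic convolution. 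The first group (only evolution operators differ) is estimated verbatim as in Lemmas~\ref{incond}, \ref{stoch1}, \ref{f} and \ref{stochconv}, now with $f=f_{v_{1}}$, $\sigma=\sigma_{v_{1}}$ and the uniform bounds above; it contributes a term bounded by $C(n,R)\big(\widetilde{T}^{2(\nu+\beta-\alpha-1)}+\widetilde{T}^{2(\nu-\alpha+\beta)-1}+\widetilde{T}^{2(\nu-\alpha)}+\widetilde{T}^{2(\nu-\alpha+2\beta)-1}\big)\,\mathbb{E}\|v_{1}-v_{2}\|^{2}_{\cC([0,\widetilde{T}\wedge\tau_{n}];Y)}$, with all exponents strictly positive by (A4') (recall $\beta\geq1/2$, $\beta+\nu>1+\alpha$, $\alpha<\nu$).

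For the second group (only the data differ) I would use the local Lipschitz assumptions. For the drift, $\|f_{v_{1}}(s)-f_{v_{2}}(s)\|_{X}\leq L_{F}\|v_{1}(s)-v_{2}(s)\|_{X}\leq CL_{F}\|v_{1}-v_{2}\|_{\cC([0,t];Y)}$ gives, after inserting $A^{\alpha}_{v_{2}}(t)$, a factor $\widetilde{T}^{\,1-\alpha}$. For the first stochastic term, (\ref{lip:sigma}) and $Y\hookrightarrow X$ yield $\|\sigma_{v_{1}}-\sigma_{v_{2}}\|_{\mathcal{L}_{2}(H,X_{2\beta})}\leq CL_{\sigma}\|v_{1}-v_{2}\|_{Y}$, and factoring $A^{\alpha}_{v_{2}}(t)U^{v_{2}}(t,0)A^{-\beta}_{v_{2}}(0)$ together with the one-sided estimate for $J$ produces a factor $\widetilde{T}^{\,\beta-\alpha+1/2}$. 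For the generalized stochastic convolution the new contribution $\int_{0}^{t}U^{v_{2}}(t,s)A_{v_{2}}(s)\int_{s}^{t}(\sigma_{v_{1}}-\sigma_{v_{2}})\,\txtd W\,\txtd s$ is handled by writing $A^{\alpha}_{v_{2}}(t)U^{v_{2}}(t,s)A_{v_{2}}(s)A^{-2\beta}_{v_{2}}(s)$ (bounded by $C(t-s)^{2\beta-1-\alpha}$ since $1-2\beta\leq0$ and $A^{1-2\beta}_{v_{2}}(s)\in\mathcal{L}(X)$), absorbing $A^{2\beta}_{v_{2}}(s)$ into $\sigma_{v_{1}}-\sigma_{v_{2}}$, and applying the one-sided $L^{2}(\Omega)$-estimate for $J$ over $[s,t]$; because $2\beta-\alpha+1/2>0$ the $s$-integral converges and leaves a factor $\widetilde{T}^{\,2\beta-\alpha+1/2}$, hence after squaring a strictly positive power of $\widetilde{T}$ times $L_{\sigma}^{2}\,\mathbb{E}\|v_{1}-v_{2}\|^{2}_{\cC([0,\widetilde{T}\wedge\tau_{n}];Y)}$.

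Collecting all the estimates I obtain
\[
\mathbb{E}\big[\|u_{1}-u_{2}\|^{2}_{\cC([0,\widetilde{T}\wedge\tau_{n}];Y)}\big]\leq C\,\eta(\widetilde{T})\,\mathbb{E}\big[\|v_{1}-v_{2}\|^{2}_{\cC([0,\widetilde{T}\wedge\tau_{n}];Y)}\big],
\]
where $C=C(n,R,L_{F},l_{F},L_{\sigma},l_{\sigma})$ and $\eta(\widetilde{T})$ is a finite sum of strictly positive powers of $\widetilde{T}$, so $\eta(\widetilde{T})\to0$ as $\widetilde{T}\searrow0$. Choosing $\widetilde{T}$ small makes $C\eta(\widetilde{T})<1$, so $\Phi$ is a contraction on $\mathcal{K}$ for the $L^{2}(\Omega;\cC([0,\widetilde{T}\wedge\tau_{n}];Y))$-norm, and since $\mathcal{K}$ is closed in this complete space Banach's fixed point theorem finishes the argument. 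The step I expect to be the main obstacle is the "only the data differ'' part of the generalized stochastic convolution: this is the sole place where the extra spatial regularity $\sigma\in\mathcal{L}_{2}(H,X_{2\beta})$ is genuinely used --- it is precisely what allows $A_{v_{2}}(s)$ to be absorbed via $A^{1-2\beta}_{v_{2}}(s)\in\mathcal{L}(X)$ while still leaving an integrable singularity in $(t-s)$ and a positive power of $\widetilde{T}$ --- and one must also check that the stopping-time-dependent constants (bounded by $\widetilde{C}(\delta)e^{\widetilde{T}n^{1/\delta}}$) stay harmless, which they do since $\widetilde{T}$ is small.
\qed
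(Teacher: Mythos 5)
Your proof follows the paper's argument almost verbatim: the same splitting of each term of $u_{1}-u_{2}$ into a piece where only the evolution operators differ (handled by re-running Lemmas~\ref{incond}, \ref{stoch1}, \ref{f} and \ref{stochconv} with the uniform bounds on $f_{v_1}$, $\sigma_{v_1}$ coming from \eqref{growth:f}, \eqref{growth:sigma}) and a piece where only the data differ (handled by \eqref{lip:f}, \eqref{lip:sigma} and the embedding $Y\hookrightarrow X$), and your identification of the generalized stochastic convolution as the place where $\sigma\in\mathcal{L}_{2}(H,X_{2\beta})$ is genuinely needed is exactly the paper's point.

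One inconsistency is worth fixing. You invoke Theorem~\ref{sol} applied to the quasilinear problem \eqref{qv}, whose representation formula carries the evolution operators $U^{u_j}$ generated by $A(u_j)$ with $u_j=\Phi(v_j)$ --- this is what the paper does --- yet your displayed decomposition is written with $U^{v_j}$. These lead to two different (both viable) arguments. If you keep $U^{u_j}$ as in the paper, the operator-difference terms are controlled by $\mathbb{E}\|u_{1}-u_{2}\|^{2}_{\cC([0,\widetilde{T}\wedge\tau_{n}];Y)}$, not by $\mathbb{E}\|v_{1}-v_{2}\|^{2}$, so your final inequality must read
\begin{equation*}
\mathbb{E}\big[\|u_{1}-u_{2}\|^{2}\big]\leq C\,\eta_{1}(\widetilde{T})\,\mathbb{E}\big[\|u_{1}-u_{2}\|^{2}\big]+C\,\eta_{2}(\widetilde{T})\,\mathbb{E}\big[\|v_{1}-v_{2}\|^{2}\big],
\end{equation*}
and one additional absorption step (choose $\widetilde{T}$ so that $C\eta_{1}(\widetilde{T})<1$ and move the first term to the left) is needed before concluding; this is implicit in the paper's ``choosing $\widetilde{T}$ small enough''. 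If instead you really mean $U^{v_j}$, i.e.\ you freeze the quasilinear part at $v$ as well and define $\Phi(v)$ through the \emph{linear} nonautonomous problem with generator $A_{v}(t)$, then your final display is correct as written and no absorption is needed, but the well-definedness of $\Phi$ should then be justified by the linear result used for \eqref{v} rather than by Theorem~\ref{sol} applied to \eqref{qv}. Either route closes the argument; just make the two halves of your write-up agree.
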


\begin{proof} 
Let $0<t \leq \widetilde{T}\wedge\tau_{n}$. Considering 
the difference between two solutions yields
\begin{align}
&u_{1}(t)-u_{2}(t)  = (U^{u_{1}}(t,0) -U^{u_{2}}(t,0 ))u_{0} 
+  (U^{u_{1}}(t,0) -U^{u_{2}}(t,0  )) \int\limits_{0}^{t}
\sigma_{v_{1}}(r)~\txtd W(r)\nonumber\\
& + U^{u_{2}}(t,0)\int\limits_{0}^{t}(\sigma_{v_{1}}(r)-
\sigma_{v_{2}}(r))~\txtd W(r) + \int\limits_{0}^{t}(
U^{u_{1}}(t,s) -U^{u_{2}}(t,s ))f_{v_{1}}(s)~\txtd s\nonumber\\
& + \int\limits_{0}^{t} U^{u_{2}}(t,s)(f_{v_{1}}(s)- 
f_{v_{2}}(s))~\txtd s - \int\limits_{0}^{t} U^{u_{2}}(t,s)
A_{u_{2}}(s)\int\limits_{s}^{t}(\sigma_{v_{1}}(r)-\sigma_{v_{2}}
(r)) ~\txtd  W(r)~\txtd s\nonumber\\
& -\int\limits_{0}^{t} (U^{u_{1}}(t,s) - U^{u_{2}}(t,s)) 
A_{u_{1}}(s)  \int\limits_{s}^{t}\sigma_{v_{1}}(r)~\txtd 
W(r)~\txtd s\nonumber\\
&- \int\limits_{0}^{t} U^{u_{2}}(t,s) (A_{u_{1}}(s) -
A_{u_{1}}(s)) \int\limits_{s}^{t} \sigma_{v_{1}}(r)~\txtd 
W(r)~\txtd s\nonumber\\
& =: (\widehat{a}^{u_{1}}(t) -\widehat{a}^{u_{2}}(t) ) + 
(\widehat{b}^{u_{1}}(t) - \widehat{b}^{u_{2}}(t))  + 
(\widehat{c}^{v_{1}}(t) - \widehat{c}^{v_{2}}(t) ) + 
(\widehat{d}^{u_{1}}(t) - \widehat{d}^{u_{2}}(t) )\nonumber\\
&+ (\widehat{e}^{v_{1}}(t) -\widehat{e}^{v_{2}}(t) ) + 
(\widehat{f}^{v_{1}}(t) -\widehat{f}^{v_{2}}(t) ) + 
(\widehat{g}^{u_{1}}(t) -\widehat{g}^{u_{2}}(t) ) + 
(\widehat{h}^{u_{1}}(t) - \widehat{h}^{u_{2}}(t)).\label{eq:defah}
\end{align}	
We now provide suitable estimates for each of the terms above in 
appropriate function spaces. For the first one, as discussed in 
Lemma \ref{incond}, we have
\begin{align*}
\mathbb{E}\left[ || \widehat{a}^{u_{1}} - \widehat{a}^{u_{2}}||^{2}_{
\cC([0,\widetilde{T}\wedge\tau_{n}];Y) }\right]\leq C (Rn)^{2}
 \widetilde{T}^{2(\nu +\beta-\alpha -1)}  \mathbb{E}\left[|| u_{1} -
u_{2}|| ^{2}_{\cC([0,\widetilde{T}\wedge\tau_{n}];Y) }\right].
\end{align*}
From Lemma \ref{stoch1}, applying the Burkholder-Davis-Gundy inequality 
and (\ref{growth:sigma}), the second term yields
\begin{align*}
& \mathbb{E}\left[||\widehat{b}^{u_{1}} - \widehat{b}^{u_{2}}||^{2}_{
\cC([0,\widetilde{T}\wedge\tau_{n}];Y)}\right]= \mathbb{E} \left[\sup\limits_{
0\leq t \leq \widetilde{T}\wedge\tau_{n } }|| A^{\alpha}_{v_{1}}(t)(U^{u_{1}}(t,0) -
U^{u_{2}}(t,0  )) \int\limits_{0}^{t}\sigma_{v_{1}}(r)~\txtd W(r) 
||^{2}_{Y}\right]\\
& \leq C \mathbb{E} \left[\sup\limits_{0\leq t \leq \widetilde{T}
\wedge\tau_{n} }
 ||A^{\alpha}_{v_{1}}(t) (U^{u_{1}}(t,0) - U^{u_{2}}(t,0) ) A^{-\beta}_{v_{2}}(0)
||^{2}_{\mathcal{L}(X)} ||\int\limits_{0}^{t}\sigma_{v_{1}}(s)
 ~\txtd W(s) ||^{2}_{Z}\right] \\
& \leq C n^{2} \widetilde{T}^{2(\nu -\alpha+\beta)-2} ||u_{1} -u_{2} 
||^{2}_{\cC([0,\widetilde{T}\wedge\tau_{n}];Y)} 
\mathbb{E}\left[ \int\limits_{0}^{\widetilde{T}\wedge\tau_{n}} 
||\sigma_{v_{1}}(t) ||^{2}_{\mathcal{L}_{2}(H,Z)}~\txtd t\right]\\
	& \leq C (l_{\sigma} n)^{2} \widetilde{T}^{2(\nu -\alpha+\beta)-1} 
	||u_{1} -u_{2} ||^{2}_{\cC([0,\widetilde{T}\wedge
	\tau_{n}];Y)} \mathbb{E} \left[\sup\limits_{0\leq t \leq
	\widetilde{T}\wedge\tau_{n}} (1+||v_{1}(t) ||^{2}_{Z})\right]\\
& \leq  C (l_{\sigma} n)^{2} \widetilde{T}^{2(\nu -\alpha+\beta)-1} 
||u_{1} -u_{2} ||^{2}_{\cC([0,\widetilde{T}\wedge
\tau_{n}];Y)} \left(1+ R^{2} + \mathbb{E}\left[||u_{0}||^{2}_{Z}\right]\right)\\
& \leq C (C_{R}l_{\sigma}n)^{2} \widetilde{T}^{2(\nu -\alpha+\beta) -1} 
\mathbb{E}\left[||u_{1}-u_{2} ||^{2}_{\cC([0,\widetilde{T}
\wedge\tau_{n}];Y) }\right]. 
\end{align*} 
According to (\ref{lip:sigma}) we get
\begin{align*}
&\mathbb{E} \left[ || \widehat{c}^{v_{1}} -\widehat{c}^{v_{2}} 
||_{\cC([0,\widetilde{T}\wedge\tau_{n}];Y)}^{2} \right]
=\mathbb{E}\left[ \sup\limits_{0\leq t \leq \widetilde{T }
 \wedge\tau_{n}}\left\|U^{u_{2}}(t,0) \int\limits_{0}^{t}
(\sigma_{v_{1}}(r)-\sigma_{v_{2}}(r))~\txtd W(r) \right\|_{Y}^{2}\right]\\
& \leq C L^{2}_{\sigma}\widetilde{T}^{2(\beta-\alpha)+1} \mathbb{E}\left[  
||v_{1} -v_{2}||^{2}_{\cC([0,\widetilde{T}\wedge\tau_{n}];Y)}\right].
\end{align*}
Keeping Lemma \ref{f} in mind, together with the fact that $v_{1}\in 
\mathcal{K}$ a.s., yields due to (\ref{growth:f})
\begin{align*}
& \left\|\int\limits_{0}^{t}(U^{u_{1}}(t,s) -U^{u_{2}}(t,s ))f_{v_{1}}
(s)~\txtd s\right\|_{Y}= \left\|\int\limits_{0}^{t}A^{\alpha}_{u_{1}}
(t)(U^{u_{1}}(t,s) -U^{u_{2}}(t,s ))f_{v_{1}}(s)~\txtd s\right\|_{X}\\
& \leq C n \int\limits_{0}^{t}  (t-\tau)^{\nu -\alpha-1} ||u_{1}(\tau)
-u_{2}(\tau) ||_{Y} ||f_{v_{1}}||_{\cC^{\delta}([0,t];X)}~\txtd \tau\\
& \leq C C_{R} n l_{F} \widetilde{T}^{\nu-\alpha} ||u_{1}-
u_{2} ||_{ \cC([0,\widetilde{T}\wedge\tau_{n}];Y) }.
\end{align*}
Consequently, we find taking the expectation that
\begin{align*}
\mathbb{E} \left[|| \widehat{d}^{u_{1}} -\widehat{d}^{u_{2}}
||^{2}_{\cC([0,\widetilde{T}\wedge\tau_{n}];Y)}\right] \leq C 
(C_{R} n l_{F} )^{2} \widetilde{T}^{2(\nu -\alpha)} 
\mathbb{E} \left[|| u_{1}-u_{2}||^{2}_{\cC([0,\widetilde{T}\wedge
\tau_{n}];Y)}\right].
\end{align*}
From (\ref{lip:f}) and using the Lipschitz assumption for $f$ we 
can directly infer, via similar calculations as above that
\begin{align*}
\mathbb{E} \left[|| \widehat{e}^{v_{1}} -\widehat{e}^{v_{2}} ||^{2}_{
\cC([0,\widetilde{T}\wedge\tau_{n}];Y)}\right] \leq C L^{2}_{F} 
\widetilde{T}^{2(1-\alpha)} \mathbb{E} \left[||v_{1}-v_{2} 
||^{2}_{ \cC([0,\widetilde{T}\wedge\tau_{n}];Y)}\right].
\end{align*}

Similar computations as in the proof of Lemma \ref{meansquare} 
together with (\ref{lip:sigma}) imply 
\begin{align*}
\mathbb{E}\left[|| \widehat{f}^{v_{1}} -\widehat{f}^{v_{2}}
 ||^{2}_{\cC([0,\widetilde{T}\wedge\tau_{n}];Y)}\right]
 \leq CL^{2}_{\sigma}\widetilde{T}^{2(\beta-\alpha)+1} \mathbb{E}\left[||v_{1} 
-v_{2}||^{2}_{ \cC([0,\widetilde{T}\wedge\tau_{n}];Y)}\right].
\end{align*}
The last two terms can be estimated as in Lemma \ref{stochconv} applying 
(\ref{growth:sigma}). The result of the computations is
\begin{align*}
\mathbb{E} \left[||\widehat{g}^{u_{1}} - \widehat{g}^{u_{2}} ||^{2}_{
\cC([0,\widetilde{T}\wedge\tau_{n}];Y)} \right]\leq C (C_{R}n l_{
\sigma})^{2} \widetilde{T}^{2 (\nu-\alpha+2\beta) -1} \mathbb{E}\left[||u_{1}-u_{2} 
||^{2}_{\cC([0,\widetilde{T}\wedge\tau_{n}];Y)}\right].
\end{align*}
Finally, another computation very similar to the previous ones gives us 
the following  estimates 
\begin{align*}
\mathbb{E} \left[||\widehat{h}^{u_{1}} - \widehat{h}^{u_{2}} ||^{2}_{
\cC([0,\widetilde{T}\wedge\tau_{n}];Y)} \right]\leq C (C_{R} n l_{\sigma})^{2} 
\widetilde{T}^{2(\nu -\alpha) +1} \mathbb{E}\left[||u_{1}-u_{2}
 ||^{2}_{\cC([0,\widetilde{T}\wedge\tau_{n}];Y)}\right].
\end{align*}
Collecting all these estimates for the terms defined in~\eqref{eq:defah} and
choosing $\widetilde{T}$ small enough proves the statement.\qed
	\end{proof}\\
	
	The following result is the analogue of Lemma \ref{ps1} in the semilinear case.
\begin{lemma}
	Let $0<\varepsilon<1$. Under the assumptions of Theorem \ref{multiplicative} it holds
	\begin{align*}
	\mathbb{P}(\tau_{n}>\varepsilon) > 1- \varepsilon^{2} (C_{1} \mathbb{E}||u_{0}||^{2}_{Z} + C_{2}\varepsilon),
	\end{align*}
	where the positive constants $C_{1}$ and $C_{2}$ are independent of $u_{0}$.
\end{lemma}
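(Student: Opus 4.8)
The plan is to follow closely the proof of Lemma~\ref{ps1}, the only genuinely new point being that the data $f$ and $\sigma$ now depend on the solution and must be controlled along the trajectory through the growth hypotheses~\eqref{growth:f}--\eqref{growth:sigma}. Given $0<\varepsilon<1$, I would first fix $n\in\mathbb{N}$ with $\frac{1}{n+1}\le\varepsilon<\frac{1}{n}$ and apply Theorem~\ref{multiplicative}, so that $(u,\widetilde{T}\wedge\tau_{n})$ is the pathwise mild solution of~\eqref{cp2}, represented by~\eqref{u2}. Exactly as in~\eqref{fuerp} one has the inclusion $\{\sup_{0\le t\le\varepsilon}\|u(t\wedge\tau_{n})\|_{Z}<n\}\subset\{\tau_{n}>\varepsilon\}$, so that by Chebyshev's inequality $\mathbb{P}(\tau_{n}>\varepsilon)\ge 1-n^{-2}\,\mathbb{E}\sup_{0\le t\le\varepsilon}\|u(t\wedge\tau_{n})\|_{Z}^{2}$, and everything reduces to an a priori second-moment bound on $\sup_{0\le t\le\varepsilon}\|u(t\wedge\tau_{n})\|_{Z}$.

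To produce this bound I would exploit that on $[0,\tau_{n}]$ one has $\|u(t)\|_{Z}\le n$; hence, using $Z\hookrightarrow X$ together with~\eqref{growth:f}--\eqref{growth:sigma}, the stopped coefficients $f_{u}(t):=F(t,u(t))$ and $\sigma_{u}(t):=\sigma(t,u(t))$ satisfy $\|f_{u}\|_{\cC([0,\varepsilon\wedge\tau_{n}];X)}\le l_{F}(1+Cn)$ and $\|\sigma_{u}\|_{\cC([0,\varepsilon\wedge\tau_{n}];\mathcal{L}_{2}(H,X_{2\beta}))}\le l_{\sigma}(1+Cn)$, and, since $X_{2\beta}\hookrightarrow X_{\beta}=Z$, also $\|\sigma_{u}\|_{\cC([0,\varepsilon\wedge\tau_{n}];\mathcal{L}_{2}(H,Z))}\le Cl_{\sigma}(1+Cn)$. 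Thus on $[0,\varepsilon\wedge\tau_{n}]$ the processes $f_{u},\sigma_{u}$ play exactly the role of the deterministic data $f,\sigma$ in Lemma~\ref{ps1}, and I would rerun on~\eqref{u2} the four term-by-term estimates of Lemma~\ref{meansquare}/Lemma~\ref{ps1}: the term $U^{u}(t,0)u_{0}$ (using $u_{0}\in Z=D(A(u)^{\beta})$); the first stochastic convolution $U^{u}(t,0)\int_{0}^{t}\sigma_{u}\,\txtd W$ (via the Burkholder-Davis-Gundy inequality); the drift $\int_{0}^{t}U^{u}(t,s)f_{u}(s)\,\txtd s$ (which produces a factor $\varepsilon^{2(1-\beta)}$); and the generalized stochastic convolution $\int_{0}^{t}U^{u}(t,s)A(u(s))\int_{s}^{t}\sigma_{u}\,\txtd W\,\txtd s$ (via Fubini, \eqref{an}, and Burkholder-Davis-Gundy). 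Keeping track, via~\eqref{est:u:stopped}, that every constant generated by the evolution family $U^{u}$ is bounded by $\widetilde{C}(\delta)e^{\varepsilon n^{1/\delta}}$, this yields
\[
\mathbb{E}\sup_{0\le t\le\varepsilon}\|u(t\wedge\tau_{n})\|_{Z}^{2}\le \widetilde{C}\,e^{\varepsilon n^{2/\delta}}\Big(\mathbb{E}\|u_{0}\|_{Z}^{2}+\varepsilon\,l_{\sigma}^{2}(1+Cn)^{2}+\varepsilon^{2(1-\beta)}l_{F}^{2}(1+Cn)^{2}\Big).
\]

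Finally I would combine this with the Chebyshev bound above and then invoke $\frac{1}{n+1}\le\varepsilon<\frac{1}{n}$, so that $n^{-2}\le 4\varepsilon^{2}$ and $(1+Cn)^{2}\le C\varepsilon^{-2}$, together with the constraint $\beta\ge\frac12$ from (A4'), to reorganize all the resulting powers of $\varepsilon$ and $n$ into the form $\mathbb{P}(\tau_{n}>\varepsilon)>1-\varepsilon^{2}\big(C_{1}\mathbb{E}\|u_{0}\|_{Z}^{2}+C_{2}\varepsilon\big)$ with $C_{1},C_{2}>0$ depending only on the structural data of the problem (in particular on $\delta$, $\beta$, $l_{F}$ and $l_{\sigma}$) but not on $u_{0}$; the two growth contributions coalesce into the single summand $C_{2}\varepsilon$. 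The step I expect to require the most care is precisely this last one: since the growth constants $l_{F},l_{\sigma}$ and the evolution-family constant $\widetilde{C}e^{\varepsilon n^{1/\delta}}$ both degenerate as $n\to\infty$, one must verify that the factors $e^{\varepsilon n^{2/\delta}}$ and $(1+Cn)^{2}$ are indeed reabsorbed against the $n^{-2}$ from Chebyshev and the explicit powers of $\varepsilon$ — which, with the coupling $n\sim\varepsilon^{-1}$ and $\beta\ge\frac12$, is exactly the bookkeeping already carried out in Lemma~\ref{ps1}, to which the present argument reduces once $f$ and $\sigma$ are replaced by $f_{u}$ and $\sigma_{u}$.
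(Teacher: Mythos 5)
Your overall strategy --- fixing $n$ with $\frac{1}{n+1}\le\varepsilon<\frac1n$, reducing to a bound on $\mathbb{E}\sup_{0\le t\le\varepsilon}\|u(t\wedge\tau_n)\|_Z^2$ via the inclusion \eqref{fuerp} and Chebyshev's inequality, and controlling the solution-dependent data through \eqref{growth:f}--\eqref{growth:sigma} --- is the same as the paper's. However, the way you deploy the growth bounds creates a genuine gap. You replace $\|u\|_X$ by $Cn$ on $[0,\tau_n]$, turning $f_u$ and $\sigma_u$ into data of size $l_F(1+Cn)$ and $l_\sigma(1+Cn)$, and then assert that the factors $(1+Cn)^2$ are reabsorbed ``exactly as in Lemma~\ref{ps1}''. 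They are not: in Lemma~\ref{ps1} the norms of $f$ and $\sigma$ are fixed and do not grow with $n$. With your bound the noise contribution after Chebyshev is
\begin{equation*}
n^{-2}\cdot\varepsilon\, l_\sigma^2(1+Cn)^2\;\sim\;C\varepsilon ,
\end{equation*}
which is of order $\varepsilon$, not of order $\varepsilon^2\cdot C_2\varepsilon=C_2\varepsilon^3$ as the statement requires (and similarly for the drift term). Your argument therefore only yields $\mathbb{P}(\tau_n>\varepsilon)>1-C_1\varepsilon^2\,\mathbb{E}\|u_0\|_Z^2-C_2\varepsilon$, a strictly weaker estimate, unless $C_2$ is allowed to depend on $\varepsilon$, which would make the claim vacuous.

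The paper avoids this by not discarding the solution inside the growth bounds: it keeps $\mathbb{E}\|u(s\wedge\tau_n)\|_Z^2$ under a time integral, arriving at
\begin{align*}
\mathbb{E}\sup_{0\le t\le\varepsilon}\|u(t\wedge\tau_n)\|_Z^2&\le C\,\mathbb{E}\|u_0\|_Z^2+\varepsilon^{2(1-\beta)}C(l_F+1)\,\varepsilon\int\limits_0^\varepsilon\mathbb{E}\|u(s\wedge\tau_n)\|_Z^2~\txtd s\\
&\quad+C\varepsilon(l_\sigma+1)\int\limits_0^\varepsilon\mathbb{E}\|u(s\wedge\tau_n)\|_Z^2~\txtd s,
\end{align*}
and then closes the estimate with Gronwall's lemma. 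In this way the $\|u\|$-dependent part of \eqref{growth:f}--\eqref{growth:sigma} is absorbed multiplicatively into a harmless exponential factor, and only the constant part of the growth bounds produces the additive $O(\varepsilon)$ term, which after division by $n^2\sim\varepsilon^{-2}$ becomes the $C_2\varepsilon^3$ contribution. Gronwall's lemma (or, equivalently, an absorption argument retaining $1+\|u\|_X^2$ rather than the crude cutoff $\|u\|_Z\le n$) is the ingredient your proof is missing; the remainder of your argument (choice of $n$, inclusion of events, Chebyshev, term-by-term estimates on \eqref{u2}) is sound and coincides with the paper's.
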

\begin{proof}
	The statement can be shown analogously to Lemma \ref{ps1}, compare the proof of Theorem 1.3 in \cite{Kim1}.
	 One can find a positive number $n$ such that   $\frac{1}{n+1}\leq\varepsilon<\frac{1}{n}$ and
	 can use (\ref{u2}) to derive estimates for
	\begin{align*}
	\mathbb{E}\sup\limits_{0\leq t \leq \widetilde{T}\wedge\tau_{n}} ||u(t)||^{2}_{Z},
	\end{align*}
which provide bounds for
\begin{align*}
	\mathbb{E}\sup\limits_{0\leq t \leq \varepsilon} ||u(t\wedge\tau_{n})||^{2}_{Z}.
\end{align*}
Using these as in (\ref{b:stopping}) and regarding the local growth boundedness of $f$ and $\sigma$ specified in (\ref{growth:f}) and (\ref{growth:sigma}), one infers that
\begin{align*}
\mathbb{E} \sup\limits_{0\leq t \leq \varepsilon} ||u(t\wedge\tau_{n})||^{2}_{Z} & \leq C \mathbb{E} ||u_{0}||^{2}_{Z} +\varepsilon^{2(1-\beta)} C (l_{F}(n) + 1 ) \varepsilon \int\limits_{0}^{\varepsilon}\mathbb{E} ||u(s\wedge\tau_{n})||^{2}_{Z} ~\txtd s \\
&+ C\varepsilon (l_{\sigma}(n)+1) \int\limits_{0}^{\varepsilon}\mathbb{E} ||u(s\wedge\tau_{n})||^{2}_{Z}~\txtd s.
\end{align*}
Gronwall's Lemma and Chebyshev's inequality prove the statement as argued in Lemma \ref{ps1}. 
\qed
	\end{proof}

\begin{remark}
Of course, one could also make global Lipschitz assumptions on $f$ and $\sigma$ and thereafter use suitable cut-offs as in the semilinear case or as in \cite{Hornung1}.
Namely one can consider the standard cut-off function $h_{n}:Z\to Z$ defined as
\begin{align*}
h_{n}u:=\begin{cases}
u, & \mbox{if } ||u||_{Z}\leq n \\
\frac{n u}{||u||_{Z}}, & \mbox{if } ||u||_{Z}>n
\end{cases}
\end{align*}
and show that $f_{n}:=h_{n}f$ and $\sigma_{n}:=h_{n}\sigma$ are globally Lipschitz continuous.
Here we have directly localized 
the assumption.
\end{remark}

From all these deliberations we finally conclude

\begin{theorem}
\label{thm:maxlocsol}
There exists a unique maximal local pathwise mild solution 
of (\ref{multiplicative}) $u\in L^{0}(\Omega; \cB([0,\tau_{\infty});Z))\cap L^{0}(\Omega; \cC^{\delta}([0,\tau_{\infty});Y)) $, 
where $\tau_{\infty}:=\lim\limits_{n\uparrow\infty} \tau_{n}$ a.s.
\end{theorem}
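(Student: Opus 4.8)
The plan is to build the maximal solution by concatenating, for each $n$, finitely many short-time solutions produced by Theorem~\ref{multiplicative}, and then gluing these along the exhausting sequence of stopping times \eqref{eq:stoptime}; the abstract bookkeeping is exactly the one in \cite[Prop.~3.11 and Sec.~3]{BrzezniakHausenblasRazafimandimby} (see also \cite{Kim1}), so the actual content of the proof is the verification of its hypotheses in the pathwise mild setting.

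Fix $n\in\N$. Theorem~\ref{multiplicative} yields a unique local pathwise mild solution of \eqref{cp2} on $[0,\widetilde{T}\wedge\tau_n]$, and one checks that the contraction constants entering its proof (Lemmas~\ref{incond}--\ref{stochconv} and the contraction estimates for \eqref{cp2}) allow $\widetilde{T}=\widetilde{T}(n)>0$ to be chosen depending on the data only through $n$, $R$, $\|f\|_{\cC^{\delta}}$ and $\|\sigma\|_{\cC}$; in particular $\widetilde{T}(n)$ is bounded below by a positive constant while the solution stays in the ball $\{\|\cdot\|_Z<n\}$. I would then restart at time $\widetilde{T}\wedge\tau_n$ with the $\cF_{\widetilde{T}\wedge\tau_n}$-measurable datum $u(\widetilde{T}\wedge\tau_n)\in Z$ --- after enlarging $R$ if necessary so that $\{\|\cdot\|_Z<n\}\subset K$ and re-localizing (A1')--(A4') --- and concatenate the resulting piece. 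Since each step has length bounded below on $\{\|u(\cdot)\|_Z<n\}$, only finitely many steps are needed to exhaust $[0,\tau_n)$, producing an $(\cF_t)_{t\in[0,\tau_n)}$-adapted process $u^{(n)}$ that satisfies \eqref{u2} on $[0,\tau_n)$; the delicate point is that the pathwise mild identity \eqref{u2} --- whose stochastic convolutions involve increments $\int_s^t\sigma(r,u(r))\,\txtd W(r)$ reaching into the future --- is stable under splitting the time interval, which follows from the pointwise construction of the generalized stochastic convolutions in \cite[Thm.~3.4--3.5]{PronkVeraar} and the evolution-system property (T2) of Theorem~\ref{po}.

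Consistency is next: for $m\le n$ one has $\tau_m\le\tau_n$ a.s., and since $u^{(m)}$ and the restriction of $u^{(n)}$ to $[0,\tau_m)$ are both local pathwise mild solutions of \eqref{cp2} with the same initial datum $u_0$, the uniqueness part of Theorem~\ref{multiplicative} forces them to agree on $[0,\tau_m)$. Hence $u:=u^{(n)}$ on $[0,\tau_n)$ is well defined on $[0,\tau_\infty)$, $\tau_\infty:=\lim_{n\uparrow\infty}\tau_n$, lies in $L^0(\Omega;\cB([0,\tau_\infty);Z))\cap L^0(\Omega;\cC^{\delta}([0,\tau_\infty);Y))$, and each pair $(u,\tau_n)$ is a local pathwise mild solution. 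On $\{\tau_\infty<\infty\}$ the definition \eqref{eq:stoptime} gives $\sup_{s\in[0,\tau_n)}\|u(s)\|_Z\ge n$ for all $n$, so $\lim_{t\uparrow\tau_\infty}\sup_{s\in[0,t]}\|u(s)\|_Z=\infty$ a.s.\ there; thus $(u,(\tau_n)_{n\ge1},\tau_\infty)$ is a maximal local pathwise mild solution in the sense of the remark following Definition~\ref{maxlocsol}. Indeed, if $(\widetilde{u},\widetilde{\tau})$ were a local pathwise mild solution with $\widetilde{\tau}\ge\tau_\infty$ a.s.\ and $\widetilde{u}|_{[0,\tau_\infty)}$ equivalent to $u$, then on $\{\tau_\infty<\infty\}$ the $Z$-trajectory of $\widetilde{u}$ would remain bounded up to $\widetilde{\tau}\ge\tau_\infty$, contradicting the blow-up just shown, so $\widetilde{\tau}=\tau_\infty$ a.s. Uniqueness of the maximal solution then follows from uniqueness of local solutions, as recalled in \cite[Sec.~3]{BrzezniakHausenblasRazafimandimby}.

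The step I expect to be the main obstacle is the concatenation: making the restart of a pathwise mild solution at a stopping time rigorous --- adaptedness of the restarted initial datum, measurability of the glued process, and the splitting of \eqref{u2} in spite of its anticipating-looking stochastic convolutions --- together with the uniform-in-$n$ lower bound on the existence time inside $\{\|\cdot\|_Z\le n\}$, which amounts to tracking that the constants in Lemmas~\ref{incond}--\ref{stochconv} depend on the coefficients only through $n$, $R$, and the deterministic norms of $f$ and $\sigma$, and to reconciling the fixed radius $R$ of $K$ with the growing balls used in the localization.
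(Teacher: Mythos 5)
Your proposal is correct and follows essentially the same route as the paper: for each $n$ one produces the unique local pathwise mild solution up to the exit time $\tau_n$ from the ball of radius $n$, uses uniqueness to show the pieces are consistent and that $(\tau_n)$ is increasing, glues along $[\tau_{n-1},\tau_n)$, and deduces maximality from the blow-up of $\sup_{s\le t}\|u(s)\|_Z$ as $t\uparrow\tau_\infty$. The paper is in fact terser than you are --- it delegates the continuation/restart step (which you rightly flag as the main obstacle) to \cite{brzezniak} and \cite{BrzezniakHausenblasRazafimandimby}, and it obtains the monotonicity of the $\tau_n$ via a short contradiction argument with the auxiliary exit time $\tau_{n,m}$ rather than asserting it before the consistency is established, as your write-up does.
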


\begin{proof}
The proof in \cite[Section 4]{brzezniak} and \cite[Section 3]{BrzezniakHausenblasRazafimandimby} adapts to our setting. We denote by $\mathcal{S}$ the set of all stopping times such that $\tau\in\mathcal{S}$ if and only if there exists a process $(u(t))_{t\in[0,\tau)}$ such that $(u,\tau)$ is the unique local pathwise mild solution of (\ref{multiplicative}). 
 For each $n\in\mathbb{N}$ we take $\tau_{n}$ such that $(u_{n},\tau_{n})$ is the unique local pathwise mild solution of (\ref{multiplicative}). This means that for each $n\in\mathbb{N}$ the pair $(u_{n},\tau_{n})$ is the local mild solution of (\ref{multiplicative}) where $\tau_{n}:=\inf\{t\geq 0 \mbox{ : } ||u_{n}(t)||_{Z}\geq n \}\wedge T$ for some $T>0$.
We now show that $\{\tau_{n},n\in\mathbb{N}\}$ is an increasing 
sequence of stopping times and possesses therefore a limit. This will give us the lifetime of $u$. To this 
aim for $n<m$ let $\tau_{n,m}:=\inf\{t\geq 0\mbox{ : } ||
u_{m}(t)||_{Z}\geq n \}\wedge T$. One can show arguing by contradiction that 
$\tau_{n}<\tau_{m}$ a.s.~if $n<m$. Since $\tau_{n,m}\leq \tau_{m}$ 
a.s.~for $n<m$ we obtain that $(u_{m},\tau_{n,m})$ is a local solution 
of (\ref{multiplicative}) as well as $(u_{n},\tau_{n})$. If $\tau_{n}>
\tau_{n,m}$ a.s. then due to the uniqueness of the pathwise mild 
solution of (\ref{multiplicative}) we infer that $u_{n}(t)=u_{m}(t)$ 
a.s.~for all $t\in[0,\tau_{n}\wedge\tau_{n,m}]=[0,\tau_{n,m}]$. This 
means that $\tau_{n,m}$ is the first exit time for $u_{n}$ with 
$\tau_{n,m}<\tau_{n}$ a.s., which is obviously a contradiction.
Therefore, we conclude that $(\tau_{n})_{n\in\mathbb{N}}$ is an increasing 
sequence of stopping times and possesses the limit $\tau_{\infty}:=\lim
\limits_{n\uparrow\infty} \tau_{n}$ a.s. Let $\{u(t)\mbox{ : } t\in
[0,\tau_{\infty})\}$ be the stochastic process defined by
$$u(t) :=u_{n}(t),\mbox{  for } t\in[\tau_{n-1},\tau_{n}), 
\hspace*{2 mm} n\geq 1,$$
where $\tau_{0}:=0$. Again, due to uniqueness we have that 
$u(t\wedge\tau_{n})=u_{n}(t\wedge\tau_{n})$ for $t>0$. All in all we 
obtained a local pathwise mild solution $(u,\tau_{\infty})$ of (\ref{multiplicative}). The last step is to show 
that this is indeed a maximal local pathwise mild solution. To this aim, 
we infer that a.s.~on the set $\{\omega: \tau_{\infty}(\omega) <T\}$
\begin{align*}
\lim\limits_{t\uparrow\tau_{\infty}} \sup\limits_{0\leq s \leq t } 
|| u(s)||_{Z} & \geq \lim\limits_{n\nearrow\infty} \sup\limits_{0\leq 
s \leq \tau_{n}} ||u(s)||_{Z}= \lim\limits_{n\nearrow\infty} \sup
\limits_{0\leq s \leq \tau_{n}} ||u_{n}(s)||_{Z}=\infty.
\end{align*}
Consequently, $(u,\tau_{\infty})$ is a maximal local pathwise 
mild solution of (\ref{multiplicative}).\qed
\end{proof}

\begin{remark}
	Since
	\begin{align*}
	 \left\{\omega: \sup\limits_{0\leq t \leq \varepsilon} ||u(t\wedge\tau_{n})||_{Z} <n \right\} \subset \{\omega : \tau_{n}(\omega) >\varepsilon\} \subset \{\omega: \tau_{\infty}(\omega)>\varepsilon \},
	\end{align*}
	obviously 
	\begin{align*}
	\mathbb{P}(\tau_{\infty} >\varepsilon )>\mathbb{P}(\tau_{n}>\varepsilon)>0.
	\end{align*}
\end{remark}

\begin{remark} 
In order to show that a solution is global-in-time one, it 
would remain to prove that $\tau_{\infty}=\infty$ a.s. As we would expect and 
as we shall see in Section~\ref{examples}, global-in-time existence can obviously
fail to hold. However, in many applications, additional structure of the 
quasilinear PDE may be enough to also obtain global results. For example, if
the determinisitc PDE part is a cross-diffusion system with an entropy structure 
\cite{Juengel1} and if the noise is multiplicative, we expect that the maximal local pathwise mild solution obtained 
in Theorem \ref{thm:maxlocsol} is indeed a global one. 
We plan to investigate this in a future work using for instance using Khashminski's 
test for non-explosion; see for example~\cite[Lemma 4.1]{brzezniak}, 
\cite[Theorem 3.2]{chow} or \cite[Section 5]{LvDuan}.
\end{remark}

\section{Applications: The Shigesada-Kawasaki-Teramoto Model} 
\label{app}

Let $G\subset \mathbb{R}^{2}$ be an open bounded $\cC^{2}$-domain. We fix 
parameters $k_{1}, k_{2}, \delta_{11},\delta_{21}>0$. We want to study 
a cross-diffusion SPDE, which has been originally introduced by 
Shigesada, Kawasaki and Teramato~\cite{ShigesadaKawasakiTeramoto} in
the deterministic setting in order to analyze population segregation by induced cross-diffusion. Note that the nonlinear term correspond to those arising in the classical Lokta-Volterra competition model. The 
stochastic SKT system is given by
\begin{equation}
\label{skt}
\begin{cases}
~\txtd u=(\Delta (k_{1} u+ a u v + c u^{2})+ \delta_{11} u 
- \gamma_{11} u^{2} - \gamma_{12} uv ) ~\txtd t + \sigma_{1}
(u,v)~\txtd W_{1}(t), & t>0, ~ x\in G\\
~\txtd v=(\Delta (k_{2} v+ b u v + d v^{2})+ \delta_{21} v 
- \gamma_{21} uv - \gamma_{22} v^{2} ) ~\txtd t + \sigma_{2}
(u,v)~\txtd W_{2}(t),& t>0, ~ x\in G \\
\frac{\partial u}{\partial n} =\frac{\partial v}{\partial n} 
=0, & t>0, ~ x\in \partial G,\\
u(x,0)=u_{0}(x)\geq0, \mbox{  }v(x,0)=v_{0}(x)\geq0 & x\in G,
\end{cases}
\end{equation}
where $W_1$, $W_2$ are stochastic processes as defined in (Y4) below. 
Here $u=u(x,t)$ and $v=v(x,t)$ denote the densities of two competing 
species $S_{1}$ and $S_{2}$ in a certain position $x\in G$ at time $t$. 
The coefficients $\gamma_{11},\gamma_{22}>0$ denote the intraspecies 
competition rates in $S_{1}$, respectively in $S_{2}$ and $\gamma_{12},
\gamma_{21}>0$ stand for the interspecies competition rates between $S_{1}$ 
and $S_{2}$. Furthermore, the terms $\Delta(c u ^{2})$ and $\Delta(d v^{2})$ 
represent the self-diffusions of $S_{1}$ and $S_{2}$ with rates $c,d\geq 0$, 
and $\Delta(a uv)$, $\Delta(b uv)$ represent the cross-diffusions of $S_{1}$ 
and $S_{2}$ with rates $a,b\geq 0$. We study the SKT model~(\ref{skt}) in 
its divergence form with linear part $\mbox{div}(\cA(U)\nabla U)$, where 
$$\cA(U)=\begin{pmatrix}
k_{1}+  2 c u+ a v & a u \\
b v & k_{2}+2 d v +b u
\end{pmatrix}
$$
where $U:=(u,v)^\top$. We denote the nonlinear term by
$$
F(U)=\begin{pmatrix}
\delta_{11} u - \gamma_{11} u^{2} -\gamma_{12}uv\\
\delta_{21} v - \delta_{21} uv -\gamma_{22} v^{2}
\end{pmatrix}.$$
We assume here that the parameters are chosen so that $\cA(U)$ is positive 
definite. For applications, the most interesting case occurs under the restriction
that $u,v\geq 0$ should be preserved. If we would know this, then it
suffices to impose $$ a^{2}<8 c b \quad \mbox{and}\quad  b^{2} <8 da ,$$
which is a necessary and sufficient condition for positive definiteness of 
$\cA(u)$. One can replace this 
by the even weaker condition $ab<64 cd$~\cite[Chapter 15, Section 3]{Yagi1}.\medskip

Our aim is to formulate equation (\ref{skt}) as an abstract quasilinear SPDE, 
as investigated in Section \ref{qspde}, on $X:=\mathbb{L}^{2}(G)= 
L^{2}(G)\times L^{2}(G)$. Throughout this section we use the same notations as in Section \ref{qspde}.
 We set $Z:=\mathbb{H}^{1+\varepsilon}(G)= 
H^{1+\varepsilon}(G)\times H^{1+\varepsilon}(G)$, for a fixed 
$0<\varepsilon<1/2$ and rewrite (\ref{skt}) as
\begin{equation}
\label{cp}
\begin{cases}
~\txtd U(t) =( A(U(t)) U(t) + F(t,U(t)))~\txtd t + \sigma(t,U(t))~\txtd
\mathbb{W}(t), \hspace*{3 mm} t\in[0,T].\\
U(0)=U_{0} \in K \mbox{ a.s.},
\end{cases}
\end{equation}
 where $\mathbb{W}:=(W_1,W_2)^\top$. 
According \cite[Proposition 15.1]{Yagi1}, there is a sectorial 
operator $A(U)$, defined via the matrix $\cA(u)$ in a standard way~\cite{Yagi1},
of angle $0<\varphi<\frac{\pi}{2}$ for $U\in \mathcal{U}_{T}$, so 
we are justified to introduce $X_{\widehat\mu}:=D(A(U)^{\widehat\mu})$, for $\widehat\mu\geq 0$, see 
below.\\

The following assertions regarding the deterministic part of (\ref{skt}) 
are stated and proved in \cite[Chapter 5]{Yagi1} and \cite[Section 3]{Yagi}. Therefore all the assumptions made in the previous section are satisfied for this example.

\begin{remark}
For more general deterministic quasilinear problems and assumptions on the 
coefficients for which the next statements hold true, see \cite[Section 10]{Amann2}.
\end{remark}

\begin{itemize}
	\item [(Y1)] For $U\in \cU_{T}$ due 
	to~\cite[Proposition 15.2]{Yagi1} $D(A(U))=H^{2}_{N}(G)\times H^{2}_{N}
	(G):=\mathbb{H}^{2}_{N}(G)$ and due to~\cite[Proposition 15.3]{Yagi1}
	\begin{align}
	X_{\widehat\mu} =\mathbb{H}^{2\widehat\mu}(G), ~ \mbox{ for } & 0\leq \widehat\mu <\frac{3}{4}\\
	X_{\widehat\mu} =\mathbb{H}^{2\widehat\mu}_{N}(G), ~ \mbox{ for } & \frac{3}{4}<\widehat\mu\leq 1.
	\end{align}
	In this context we infer that $Z=D(A(U)^{\beta})$ for $\beta=1/2+\varepsilon/2$.
	\item [(Y2)] According to \cite[(15.10), p. 492]{Yagi1}, the following 
	local Lipschitz continuity of the generators holds true: there exist a constant $\widetilde{L}=\widetilde{L}(U,V)>0$ such that
	\begin{equation}\label{lipa}
	|| A(U)-A(V)||_{\mathcal{L}(\mathbb{H}^{2}_{N}(G), X)}\leq 
	\widetilde{L}(U,V)||U-V ||_{Y},\mbox{ for } U,V\in\mathcal{U}_{T}.
	\end{equation}
Here $Y:=D(A(U)^{\alpha})$ for $0\leq \alpha\leq\frac{1+\varepsilon_{0}}{2}$, where $0<\varepsilon_{0}<\varepsilon$, see also \cite{Yagi}.
	\item [(Y3)] Let $F: \Omega\times \mathcal{U}_{T}\to X$. There exist 
	constants $L_{F}=L_{F}(u), l_{F}=l_{F}(u)>0$ such that
	$$||F(U)-F(V)||_{X}\leq L_{F} || U -V ||_{X}, \mbox{ for } 
	U,V\in \mathcal{U}_{T}$$
	and $$ || F(U)||_{X} \leq l_{F} (1+ ||U ||_{X}) \mbox{, for } 
	U\in \mathcal{U}_{T}. $$
	The local Lipschitz continuity is satisfied since $F$ is a square function 
	of $u$ and $v$, see~\cite{Yagi} and \cite{Yagi1}. 
	\item [(Y4)] $\mathbb{W}=(W_{1}, W_{2})$ is an $H$-cylindrical Brownian 
	motion, $\sigma:\Omega\times \mathcal{U}_{T}\to 
	\mathcal{L}_{2}(H,X_{2\beta})$, where $H$ stands for a separable Hilbert 
	space. Furthermore, there exist constants $L_{\sigma}=L_{\sigma}(u,v), l_{\sigma}=l_{\sigma}(u)>0$ such that
 	$$||\sigma(U)-\sigma(V) ||_{\mathcal{L}_{2}(H,X_{2\beta})} \leq 
	L_{\sigma} ||U -V||_{X}, \mbox{ for } U,V \in \mathcal{U}_{T} ;$$
	respectively
	\begin{equation} 
	\label{gb}
	||\sigma(U)||_{\mathcal{L}_{2}(H,X_{2\beta})} \leq l_{\sigma}
	(1+|| U||_{X}), \mbox{ for } U\in \mathcal{U}_{T} . 
	\end{equation}
\end{itemize}

Keeping this in mind, we conclude that all assumnptions made in Section \ref{qspde} are fulfilled and the restrictions on the exponents $\alpha$, $\beta$ and $\nu$ imposed in (A4') hold.
\begin{remark} 
Note that non-negativity of local solutions for (\ref{skt}) is not ensured 
by (\ref{gb}). There is actually a trade-off: if we allow for additive noise 
as in (\ref{gb}), then we need a very strong assumption of uniform positive
definiteness for $\cA(u)$ but if we allow for more general matrices 
$\cA(u)$, then we need more assumptions on the noise, e.g., we \emph{conjecture}
that the assumption
\begin{equation}
		||\sigma(U)||_{\mathcal{L}_{2}(H,X_{2\beta})} \leq l_{\sigma}|| U||_{X}, 
		\mbox{ for } U\in \mathcal{U}_{T}, 
\end{equation}
together with $u_0>0$, $v_0>0$ uniformly in space, will imply short-time existence
up to a stopping time and preserve positivity, see ~\cite{Assing} or Theorem 1.3 in \cite{Kim1}. 
\end{remark}

Regarding the assumptions (Y1)-(Y4), we infer that in the context of 
Section~\ref{qspde} we have $Z=X_{\beta}$ for $\beta=\frac{1+\varepsilon}{2}$ 
and $Y=X_{\alpha}$ for $\alpha$ specified in \ref{lipa} . Note that 
(\ref{lipa}) implies that (\ref{at}) and (\ref{nu}) are fulfilled with $\nu=1$.
Therefore, we apply for (\ref{skt}) the abstract results proved in 
Section~\ref{qspde} and infer:

\begin{theorem}
Under the assumptions stated in this section the stochastic SKT equation 
(\ref{skt}) possesses a unique  maximal local pathwise mild solution 
$U\in L^{0}(\Omega; \cB([0,\tau_{\infty});Z)) 
\cap L^{0}(\Omega; \cC^{\delta}([0,\tau_{\infty});Y)) $.
\end{theorem}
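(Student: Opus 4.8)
The plan is to treat this statement as a corollary of Theorem~\ref{thm:maxlocsol}: I would cast \eqref{skt} into the abstract form \eqref{cp2}, verify that assumptions (A1')--(A4') together with the local Lipschitz and growth conditions \eqref{lip:f}--\eqref{growth:sigma} hold for the concrete operators, and then invoke Theorem~\ref{thm:maxlocsol} verbatim. Concretely, I would set $X:=\mathbb{L}^{2}(G)$, let $A(U)$ be the sectorial realisation in divergence form of $\mathrm{div}(\mathcal{A}(U)\nabla U)$ with Neumann boundary conditions as in \eqref{cp}, and take $F$ and $\sigma$ as the drift and diffusion coefficients appearing in \eqref{cp}. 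By (Y1) the domain $D(A(U))=\mathbb{H}^{2}_{N}(G)$ is independent of $U\in\mathcal{U}_{T}$, so we are in the constant-domain case $\nu=1$, and the fractional power spaces $X_{\widehat\mu}=D(A(U)^{\widehat\mu})$ identify with the stated Sobolev scale; with $Z:=\mathbb{H}^{1+\varepsilon}(G)$ ($0<\varepsilon<1/2$ fixed) this yields $Z=X_{\beta}$ for $\beta=\tfrac{1+\varepsilon}{2}$, and with $Y:=X_{\alpha}$ for $0\le\alpha\le\tfrac{1+\varepsilon_{0}}{2}$, $0<\varepsilon_{0}<\varepsilon$ (as in \eqref{lipa}), we obtain the chain $Z\hookrightarrow Y\hookrightarrow X$.

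Next I would check (A1')--(A4'). Sectoriality (A1') and the Hille--Yosida resolvent bound (A2') follow from \cite[Proposition~15.1]{Yagi1} once $\mathcal{A}(U)$ is positive definite, which is guaranteed by the restrictions on the coefficients stated in this section (e.g. $a^{2}<8cb$ and $b^{2}<8da$, or the weaker $ab<64cd$); uniformity of the constants over $\mathcal{U}_{T\wedge\tau_{n}}$ comes from the Sobolev embedding $Z\hookrightarrow\mathbb{L}^{\infty}(G)$, which bounds the entries of $\mathcal{A}(U)$ in terms of $\|U\|_{Z}\le n$. The estimate \eqref{lipa} of (Y2) is precisely \eqref{nu} with $\nu=1$ and Lipschitz constant proportional to $n$ on $\mathcal{U}_{T\wedge\tau_{n}}$, which also gives the Acquistapace--Terreni condition \eqref{at} with $\nu=1$ and any $\delta\in(0,1]$; hence (A3') holds. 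For (A4') I would verify the exponent inequalities $0\le\alpha<\beta<\nu\le1$, $\beta+\nu>1+\alpha$, $\beta\ge\tfrac12$: indeed $\beta=\tfrac{1+\varepsilon}{2}\in[\tfrac12,1)$ since $0<\varepsilon<\tfrac12$; $\alpha\le\tfrac{1+\varepsilon_{0}}{2}<\tfrac{1+\varepsilon}{2}=\beta$ since $\varepsilon_{0}<\varepsilon$; and $\beta+\nu=\tfrac{1+\varepsilon}{2}+1>1+\alpha$ reduces to $\alpha<\beta$, already known. One can then pick $\delta$ with $0=1-\nu<\delta<(\beta-\alpha)\wedge\gamma$ because $\beta-\alpha>0$ and $\gamma>0$, so Definition~\ref{def:p} is meaningful here.

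The remaining conditions on the nonlinearity are exactly the content of (Y3) and (Y4): the drift $F$ is a quadratic polynomial in $(u,v)$, hence locally Lipschitz and of linear growth as a map into $X=\mathbb{L}^{2}(G)$, which gives \eqref{lip:f}--\eqref{growth:f}; and the noise coefficient $\sigma$ maps into $\mathcal{L}_{2}(H,X_{2\beta})$ with the local Lipschitz and growth bounds \eqref{lip:sigma}--\eqref{growth:sigma}, as assumed in (Y4). With all hypotheses of Theorem~\ref{multiplicative} in force, Theorem~\ref{thm:maxlocsol} then delivers the unique maximal local pathwise mild solution $U\in L^{0}(\Omega;\cB([0,\tau_{\infty});Z))\cap L^{0}(\Omega;\cC^{\delta}([0,\tau_{\infty});Y))$. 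I expect the only genuinely delicate point to be the bookkeeping of the previous paragraph: matching the concrete values $\beta=\tfrac{1+\varepsilon}{2}$ and $\alpha\le\tfrac{1+\varepsilon_{0}}{2}$ with every exponent constraint in (A4') and with the higher space-regularity $\sigma(\cdot)\in\mathcal{L}_{2}(H,X_{2\beta})$ (note that $2\beta>1$) needed for the generalised stochastic convolution. The analytic heart, namely sectoriality of $A(U)$ and the identification of its fractional powers with Sobolev spaces, is borrowed wholesale from the deterministic SKT theory of \cite{Yagi1} and \cite{Yagi}, while the positivity trade-off recorded in the preceding remark is what obstructs weakening the coefficient restrictions on $\mathcal{A}(U)$.
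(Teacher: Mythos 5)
Your proposal is correct and follows essentially the same route as the paper: Section~\ref{app} establishes the theorem precisely by verifying that (Y1)--(Y4) yield (A1')--(A4') and the local Lipschitz/growth conditions with $\nu=1$, $\beta=\tfrac{1+\varepsilon}{2}$, $\alpha\le\tfrac{1+\varepsilon_0}{2}$, and then invoking the abstract result of Theorem~\ref{thm:maxlocsol}. Your explicit check of the exponent inequalities in (A4') only fills in details the paper asserts without computation.
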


\section{Examples}
\label{examples}

As already known and well-established in the deterministic case, quasilinear 
PDEs do not possess global solutions without further assumptions. The aim of 
this subsection is to present simple examples of stochastic PDEs with 
cross-diffusion so that their solution cannot exist globally.\medskip 

\subsection{A Cross-Diffusion SPDE with Finite-Time Blow-Up}

In this setting we give 
an example of a cross-diffusion equation that exhibits finite-time blow up 
in the deterministic case and prove that this holds true also in the stochastic 
one. This fact is not surprising, since we consider here only white 
noise~\cite[Theorems 4.1-4.3]{LvDuan} but it seems useful to have for completeness.
To this aim, we denote by $\phi$ the normalized eigenfunction associated to 
the first eigenvalue $\lambda_{1}$ of the Dirichlet-Laplacian in $G$, where 
$G\subset\mathbb{R}^{n}$ is an open-bounded $\cC^{2}$-domain. More precisely, we 
have that
\begin{equation}\label{phi}
\begin{cases}
& \Delta \phi = -\lambda _{1} \phi, \mbox{ in } G; \hspace*{2 mm} 
\phi=0,  ~ x\in\partial G;\\
& \int\limits_{G} \phi(x)~\txtd x=1.
\end{cases}
\end{equation}
Note that $\phi(x)>0$ for $x\in G$. Keeping this in mind, we consider 
the following SPDE: 

\begin{example}
	\begin{equation}\label{blowupinf}
		\begin{cases}
	~\txtd u= (\Delta u + \frac{1}{2} \Delta v +u^{2}+\frac{
	\lambda_{1}}{2} v) ~\txtd t + \sigma(u)~\txtd W(t), & t>0, ~ x\in G\\
	~\txtd v= (\Delta v + (\lambda_{1} + k ) v)~\txtd t, & t>0, ~ x\in G ,\\
	u|_{\partial G} = v|_{\partial G} = 0, & t>0,\\
	u(x,0)=u_{0}(x)\geq 0, \mbox{ } v(x,0)=\phi(x), &x\in G.
	\end{cases}
	\end{equation}
\end{example}

In this case we set $U:=(u,v)^{T}$ and have
$$A(U)=\begin{pmatrix}
1 & \frac{1}{2} \\
0 & 1
\end{pmatrix}\Delta
\quad \mbox{ and } \quad 
F(U)=\begin{pmatrix}
u^{2} + \frac{\lambda_{1}}{2} v\\
(\lambda_{1}+ k) v
\end{pmatrix}
.$$
The only requirement for $\sigma$ is  
local-Lipschitz continuity in order to ensure the existence of a local 
solution for (\ref{blowupinf}). As we will see in the next computation 
the stochastic term will not play a role due to the fact that the 
expectation of the It\^{o}-integral is $0$.
In the deterministic case we know that the first component of the 
solution of (\ref{blowupinf}) blows up in finite-time. We now show 
that this remains valid in the stochastic setting.

\begin{lemma}
\label{ex1}
Consider the SPDE~\eqref{blowupinf}. 
There exists $u_0\geq0$ and a finite time $T^{*}$ such that 
\begin{equation}
\lim\limits_{t\nearrow T^{*}} \mathbb{E}\left[ \sup\limits_{x\in G} u(x,t)\right]
=+\infty.
\end{equation}
\end{lemma}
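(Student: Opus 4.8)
The plan is to decouple the system, reduce it to a scalar semilinear SPDE and then run Kaplan's first‑eigenfunction comparison argument for blow‑up. First I would solve the $v$‑equation explicitly: by the triangular structure of the diffusion matrix the second line of~\eqref{blowupinf} is the \emph{deterministic} linear problem $\partial_t v=\Delta v+(\lambda_1+k)v$, $v|_{\partial G}=0$, $v(0)=\phi$, whose unique solution is $v(x,t)=e^{kt}\phi(x)$ since $\Delta\phi=-\lambda_1\phi$. Substituting $v=e^{kt}\phi$ into the first line, the contribution of $v$ cancels exactly, because $\tfrac12\Delta v+\tfrac{\lambda_1}{2}v=-\tfrac{\lambda_1}{2}e^{kt}\phi+\tfrac{\lambda_1}{2}e^{kt}\phi=0$. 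Hence, by uniqueness in Theorem~\ref{thm:maxlocsol}, the first component of the solution of~\eqref{blowupinf} coincides with the unique maximal local pathwise mild solution $u$ of the scalar problem $\txtd u=(\Delta u+u^{2})\,\txtd t+\sigma(u)\,\txtd W$ with $u|_{\partial G}=0$, $u(0)=u_{0}\ge 0$; for this constant‑coefficient linear part the pathwise mild formula~\eqref{u2} collapses, via the integration‑by‑parts identity of Appendix~\ref{a}, to the usual variation‑of‑constants formula with the Dirichlet heat semigroup $e^{t\Delta}$.

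Next I would test against $\phi$. Set $\psi(t):=\int_G u(x,t)\phi(x)\,\txtd x$; pairing the mild formula with $\phi$ and using that $e^{t\Delta}$ is self‑adjoint with $e^{t\Delta}\phi=e^{-\lambda_1 t}\phi$ shows that $\psi$ solves $\txtd\psi=(-\lambda_1\psi+\langle u^{2},\phi\rangle)\,\txtd t+\langle\sigma(u),\phi\rangle\,\txtd W$. Two Jensen inequalities are the crux: since $\phi\ge 0$ and $\int_G\phi=1$, the measure $\phi\,\txtd x$ is a probability measure, so $\langle u^{2},\phi\rangle\ge\psi^{2}$ pointwise in $\omega$, and $\mathbb{E}[\psi^{2}]\ge(\mathbb{E}\psi)^{2}$. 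Localizing by the stopping times $\tau_n$ of Theorem~\ref{thm:maxlocsol} (on $[0,\tau_n]$ the integrand $\langle\sigma(u),\phi\rangle$ is bounded, hence the stopped It\^o integral is a true zero‑mean martingale), taking expectations and passing to the limit yields, for $m(t):=\mathbb{E}\,\psi(t)$, the differential inequality $m'(t)\ge m(t)^{2}-\lambda_1 m(t)$ on the maximal interval where $m$ is finite. Choosing $u_{0}\ge 0$ with $a:=\int_G u_{0}\phi\,\txtd x>\lambda_1$ and comparing with the Riccati ODE $y'=y^{2}-\lambda_1 y$, $y(0)=a$, which blows up at the finite time $T^{*}=\lambda_1^{-1}\log\frac{a}{a-\lambda_1}$, forces $m(t)\to+\infty$ as $t\nearrow T^{*}$ (or earlier). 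Since $\phi\ge 0$ with unit mass gives $\psi(t)\le\sup_{x\in G}u(x,t)$ pathwise, we conclude $\mathbb{E}\big[\sup_{x\in G}u(x,t)\big]\ge m(t)\to+\infty$, which is the assertion.

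The step I expect to be delicate is the rigorous passage ``$\mathbb{E}$ of the It\^o integral vanishes'' through the localization, i.e.\ ensuring that $m$ stays finite up to $T^{*}$ and that the differential inequality survives the limit $n\to\infty$. If $\sigma(0)=0$ — the natural choice for a population model, for which a stochastic comparison principle gives $u\ge 0$ — this is immediate: $\psi\ge 0$ and monotone convergence applies directly. In general one carries the error term $\mathbb{E}[\mathbf{1}_{\{\tau_n\le t\}}\psi(\tau_n)]$, controlled by $Cn\,\mathbb{P}(\tau_n\le t)$ via the embedding $Z\hookrightarrow\mathbb{L}^{2}(G)$, and lets it vanish in the limit; equivalently, one may run the argument on the averaged profile $\bar u(x,t):=\mathbb{E}[u(x,t)]$, which by the pointwise Jensen bound $\mathbb{E}[u^{2}]\ge\bar u^{2}$ is a supersolution of the deterministic problem $\partial_t w=\Delta w+w^{2}$ whose finite‑time blow‑up was recalled above. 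The remaining prerequisites — sectoriality of the matrix Laplacian appearing in~\eqref{blowupinf} and local well‑posedness of the reduced scalar equation (the nonlinearity $u\mapsto u^{2}$ is locally Lipschitz on $Z$‑bounded sets) — are covered by Sections~\ref{p}--\ref{qspde}.
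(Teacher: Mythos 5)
Your proof is correct and follows essentially the same route as the paper: decouple $v=e^{kt}\phi$, reduce to $\txtd u=(\Delta u+u^2)\,\txtd t+\sigma(u)\,\txtd W$, and run Kaplan's first-eigenfunction argument on $\widetilde y(t)=\mathbb{E}\int_G u\phi\,\txtd x$ via the two Jensen inequalities to obtain the Riccati inequality $\widetilde y'\geq \widetilde y^2-\lambda_1\widetilde y$. The only differences are cosmetic — the paper frames it as a contradiction to the assumed finiteness of $\sup_{t\le T}\mathbb{E}[\sup_x u]$, whereas you argue directly and are in fact more explicit about the localization needed to kill the expectation of the stochastic integral and about the choice $(u_0,\phi)>\lambda_1$.
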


\begin{proof} 
Since $v(x,t)=\txte^{kt} \phi(x)$ is the solution of the second equation, 
the first one results in 
\begin{equation}
	\label{firstc}
	\begin{cases}
	~\txtd u = (\Delta u + u ^{2} ) ~\txtd t + \sigma(u)~\txtd W(t)\\
	u(0)=u_{0}(x)\geq 0.
	\end{cases}
\end{equation}
Note that (\ref{firstc}) is a parabolic SPDE. Under the above assumptions, 
it is known that this possesses a local positive solution which exhibits 
finite-time blow-up \cite[Theorem 4.1]{LvDuan}. For the convenience of the 
reader we indicate the proof of this statement. Assuming that there exists 
a global solution $u$ of (\ref{firstc}) such that for any $T>0$
\begin{equation}
\label{contradiction}
	\sup\limits_{0\leq t \leq T} \mathbb{E} \left[\sup\limits_{x\in G} u(x,t)\right]<\infty.
	\end{equation}
we immediately also get that
	\begin{equation*}
	\sup\limits_{0\leq t \leq T} \mathbb{E}\left[ \int\limits_{G} 
	u(x,t)\phi(x)~\txtd x\right] \leq \sup\limits_{0\leq t \leq T} \mathbb{E}\left[ 
	\sup\limits_{x\in G } u(x,t)\right]<\infty.
\end{equation*}
We set 
$$y(t):=\int\limits_{G} u(x,t)\phi(x)~\txtd x \quad \text{for $t\geq 0$},$$ 
multiply~\eqref{firstc} by $\phi$, take the expectation and obtain via a 
direct application of integration-by-parts and Fubini's Theorem that
\begin{align*}
	\mathbb{E}\left[ y(t) \right]= \underbrace{\int\limits_{G} u_0(x)\phi(x)~\txtd 
	x}_{=:(u_0,\phi)} -\lambda _{1} 
\int\limits_{0}^{t} \mathbb{E} \left[y(s)\right]~\txtd s + \int\limits_{0}^{t} 
\mathbb{E} \left[\int\limits_{G} u^{2}(x,s)\phi(x)~\txtd x~\txtd s\right].
\end{align*}
Setting $\widetilde{y}(t):=\mathbb{E} \left[y(t)\right]$ for $t>0$ and 
differentiating with respect to $t$ we obtain
\begin{align*}
\begin{cases}
\frac{\txtd\widetilde{y}(t)}{\txtd t}=-\lambda_{1} \widetilde{y}(t) 
+ \mathbb{E} \left[\int\limits_{G}u^{2}(x,t)\phi(x)\right]~\txtd x \\
\widetilde{y}(0)=(u_{0},\phi).
\end{cases}
\end{align*}
From Jensen's and Cauchy-Schwarz inequality we have
\begin{align*}
\widetilde{y}^{2}(t)&=\mathbb{E} \left[\int\limits_{G} u(x,t)
\phi(x)~\txtd x \right]^{2} \leq \mathbb{E} \left[  \int\limits_{G}
 u(x,t)\phi(x)~\txtd x\right]^{2} \\
&\leq  \mathbb{E}\left[ \int\limits_{G} u^{2}(x,t) \phi(x)~\txtd x \right] 
\int\limits_{G}\phi(x)~\txtd x,
\end{align*}
Consequently, using (\ref{phi})
\begin{align}\label{b}
\begin{cases}
\frac{d\widetilde{y}(t)}{dt}\geq -\lambda_{1} \widetilde{y}(t) 
+ \widetilde{y}^{2}(t)\\
\widetilde{y}(0)=(u_{0},\phi).
\end{cases}
\end{align}
So $\widetilde{y}$ must blow up in a finite time for a suitable 
$u_0$, which is a contradiction to (\ref{contradiction}). This proves 
the assertion.\qed
\end{proof}

\subsection{A Cross-Diffusion SPDE with Degenerating Quasilinear Operator}

We first construct an example, in which the solution blows up in 
finite time and cannot remain positive starting with a positive 
initial condition. Therefore, in this case the maximum principle 
is not valid. To this aim, letting $k>\lambda_{1}$ as in Lemma~\ref{ex1}, 
we consider
\begin{example}
	\begin{equation}\label{blowup}
	\begin{cases}
	~\txtd u= (\Delta u + \frac{1}{2} \Delta v + 
	u (2\lambda_{1} -u)) ~\txtd t + \sigma(u)~\txtd W(t), &t>0, ~ x\in G,\\
	~\txtd v=( \Delta v + (\lambda_{1} + k ) v)~\txtd t, & t>0, ~ x\in G,\\
	u|_{\partial G} = v|_{\partial G} = 0, & t>0,\\
	u(x,0)=u_{0}(x) \geq 0, \mbox{ } v(x,0)=\phi(x), & x\in G.
	\end{cases}
	\end{equation}
\end{example}
Here we have for $U=(u,v)^{T}$ 
$$A(U)=\begin{pmatrix}
1 & \frac{1}{2} \\
0 & 1
\end{pmatrix}\Delta
\quad \mbox{ and } \quad  
F(U)=\begin{pmatrix}
u(2\lambda_{1} - u)\\
(\lambda_{1}+ k) v
\end{pmatrix}
.$$

In the deterministic case it is known that $u$ blows up in finite 
time and cannot remain positive~\cite[Theorem 1.6]{le}. We investigate 
now this situation by similar methods as in Lemma \ref{ex1} in the stochastic 
framework.

\begin{lemma}
Consider the SPDE~\eqref{blowup}. There exists a finite time $T^{*}$ such that 
 \begin{equation}\label{bu}
 \lim\limits_{t\nearrow T^{*}-} \mathbb{E}\left[ \sup\limits_{x\in G}u(x,t)\right]
=-\infty.
 \end{equation}
\end{lemma}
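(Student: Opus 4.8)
The plan is to follow the strategy of Lemma~\ref{ex1}. First I would solve the decoupled linear $v$-equation in~\eqref{blowup}: using~\eqref{phi} one checks directly that $v(x,t)=\txte^{kt}\phi(x)$, since $\partial_t v=kv$ and $\Delta v+(\lambda_1+k)v=\txte^{kt}\big(-\lambda_1\phi+(\lambda_1+k)\phi\big)=kv$. Substituting this into the first equation and using $\Delta\phi=-\lambda_1\phi$ converts the coupling term $\tfrac12\Delta v$ into the deterministic source $-\tfrac{\lambda_1}{2}\txte^{kt}\phi$, so that~\eqref{blowup} reduces to the scalar parabolic SPDE
\begin{equation*}
\txtd u=\Big(\Delta u-\tfrac{\lambda_1}{2}\txte^{kt}\phi+2\lambda_1 u-u^2\Big)\,\txtd t+\sigma(u)\,\txtd W(t),\qquad u(0)=u_0\ge 0,
\end{equation*}
for which a local solution exists by the theory developed above (or by~\cite{LvDuan}).

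Next, arguing by contradiction as in Lemma~\ref{ex1}, I would assume a global solution $u$ exists with $\sup_{0\le t\le T}\mathbb{E}\big[\sup_{x\in G}|u(x,t)|\big]<\infty$ for every $T>0$, and set $y(t):=\int_G u(x,t)\phi(x)\,\txtd x$, $\widetilde y(t):=\mathbb{E}[y(t)]$. Testing the reduced equation against $\phi$, integrating by parts (the boundary terms vanish because $u=\phi=0$ on $\partial G$), using $\int_G\Delta u\,\phi=-\lambda_1 y$, taking expectations (the It\^{o} integral has zero mean) and applying Fubini's theorem, one obtains after differentiating in $t$
\begin{equation*}
\frac{\txtd\widetilde y}{\txtd t}=\lambda_1\widetilde y-\mathbb{E}\!\Big[\int_G u^2(x,t)\phi(x)\,\txtd x\Big]-\frac{\lambda_1}{2}\,\txte^{kt}\!\int_G\phi^2(x)\,\txtd x,\qquad \widetilde y(0)=\int_G u_0(x)\phi(x)\,\txtd x .
\end{equation*}
Jensen's and the Cauchy--Schwarz inequality (exactly as in Lemma~\ref{ex1}, using $\int_G\phi=1$) give $\widetilde y(t)^2\le\mathbb{E}\big[\int_G u^2\phi\,\txtd x\big]$, so with $c:=\tfrac{\lambda_1}{2}\|\phi\|_{L^2(G)}^2>0$ this yields the Riccati-type differential inequality
\begin{equation*}
\frac{\txtd\widetilde y}{\txtd t}\le\lambda_1\widetilde y-\widetilde y^2-c\,\txte^{kt}.
\end{equation*}

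Finally I would show that this inequality forces $\widetilde y$ to reach $-\infty$ in finite time. Completing the square gives $\widetilde y'\le\tfrac{\lambda_1^2}{4}-c\,\txte^{kt}$, whence $\widetilde y(t)\le\widetilde y(0)+\tfrac{\lambda_1^2}{4}t-\tfrac{c}{k}(\txte^{kt}-1)\to-\infty$, so there is a finite $t_1$ with $\widetilde y(t_1)\le-2\lambda_1<0$. On $\{\widetilde y\le-2\lambda_1\}$ one has $\lambda_1\widetilde y-\widetilde y^2\le-\widetilde y^2$, so past $t_1$ the inequality improves to $\widetilde y'\le-\widetilde y^2$; comparison with $w'=-w^2$, $w(t_1)=\widetilde y(t_1)$, whose solution $w(t)=\widetilde y(t_1)\big(1+\widetilde y(t_1)(t-t_1)\big)^{-1}$ is decreasing and diverges to $-\infty$ at $T^{*}:=t_1+1/|\widetilde y(t_1)|<\infty$, gives $\widetilde y(t)\le w(t)\to-\infty$ as $t\nearrow T^{*}$. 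Since $\mathbb{E}[\inf_{x\in G}u(x,t)]\le\widetilde y(t)\le\mathbb{E}[\sup_{x\in G}u(x,t)]$, the divergence $\widetilde y(t)\to-\infty$ contradicts the assumed moment bound; hence no such global solution exists and the solution blows down at $T^{*}$, which is the assertion. The main obstacle is precisely this last comparison step: one has to combine the exponentially growing negative source (which pushes the mean $\widetilde y$ strictly below zero within a finite time) with the quadratic absorption $-\widetilde y^2$ to obtain a genuine finite-time blow-down rather than merely $\widetilde y\to-\infty$ as $t\to\infty$.
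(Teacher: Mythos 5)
Your proof is correct and follows essentially the same route as the paper: solve the decoupled equation to get $v=\txte^{kt}\phi$, reduce to the scalar SPDE for $u$, test against $\phi$, take expectations and apply Jensen/Cauchy--Schwarz to arrive at the Riccati-type inequality $\widetilde y'\le\lambda_1\widetilde y-\widetilde y^2-\tfrac{\lambda_1}{2}\txte^{kt}\int_G\phi^2$, and then force finite-time divergence of $\widetilde y$ to $-\infty$. The only (immaterial) difference is the final ODE step: the paper splits the inequality into a linear part, handled by Gronwall and the hypothesis $k>\lambda_1$ to make $\widetilde y$ negative after some $t_1$, and the part $\widetilde y'\le\lambda_1\widetilde y-\widetilde y^2$, handled via the substitution $w=-1/\widetilde y$, whereas you complete the square and then compare with $w'=-w^2$; both manipulations are elementary and yield the same conclusion.
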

\begin{proof} 
Since $v(x,t)=\txte^{kt} \phi(x)$ is the solution of the second equation, 
the first one results in 
\begin{equation}
\label{firstcomponent}
	~\txtd u= (\Delta u -\frac{\lambda_{1}}{2}\txte^{k t}\phi + 
	2\lambda_{1}u- u ^{2} ) ~\txtd t + \sigma(u)~\txtd W(t).
\end{equation}
We prove the assertion by similar methods to those presented in Example~\ref{blowupinf}
by setting $$y(t):=\int\limits_{G} u(x,t)\phi(x)~\txtd x$$ 
and by also defining $\widetilde{y}(t):=\mathbb{E} \left[y(t)\right]$. 
A direct calculation yields
\begin{equation*}
\begin{cases}
\frac{d\widetilde{y}(t)}{dt} = \lambda _{1} \widetilde{y}(t) - 
\frac{\lambda_{1}}{2} \txte^{kt}  \int\limits_{G} \phi^{2}(x)dx - 
\mathbb{E} \int\limits_{G} u^{2}(s,x)\phi(x)dx\\
\widetilde{y}(0)=(u_{0},\phi).
\end{cases}
\end{equation*}
Again, we infer due to Jensen's inequality that
\begin{equation*}
\begin{cases}
\frac{d\widetilde{y}(t)}{dt} \leq \lambda _{1} \widetilde{y}(t) 
- \frac{\lambda_{1}}{2} \txte^{kt}  \int\limits_{G} \phi^{2}(x)~\txtd x 
- \widetilde{y}^{2}(t)\\
\widetilde{y}(0)=(u_{0},\phi).
\end{cases}
\end{equation*}
In order to reach a contradiction, we combine the inequalities
$$\frac{\txtd\widetilde{y}(t)}{\txtd t} \leq \lambda _{1} \widetilde{y}(t) 
- \frac{\lambda_{1}}{2} \txte^{kt}  \int\limits_{G} \phi^{2}(x)~\txtd x\quad 
\mbox{  and  }  \frac{\txtd\widetilde{y}(t)}{\txtd t} \leq \lambda _{1} 
\widetilde{y}(t) - \widetilde{y}^{2}(t). $$
The first one entails using Gronwall's Lemma
\begin{equation}
\label{gronwall}
\widetilde{y}(t) \leq \txte^{\lambda_{1} t} \left(  (u_{0},\phi) 
-\frac{\lambda_{1}(\txte^{(k-\lambda_{1})t}-1 ) }{2(k-\lambda_{1})} 
\int\limits_{G} \phi^{2}(x)~\txtd x  \right), \mbox{  for } t>0.
\end{equation}
Since $k>\lambda_{1}$, the estimate (\ref{gronwall}) implies that 
there exists $t_{1}>0$ such that $\widetilde{y}(t)<0$ for $t\geq t_{1}$, also
$$ \mathbb{E} \left[\int\limits_{G} u(t,x)\phi(x)~\txtd x\right]<0, \quad
\mbox{ for } t\geq t_{1}.$$
Consequently, for $t\geq t_{1}$, the inequality
$
\frac{\txtd\widetilde{y}(t)}{\txtd t} \leq \lambda_{1} \widetilde{y}(t) 
- \widetilde{y}(t)^2
$
implies that
\begin{equation*}
\frac{\txtd\widetilde{y}(t)}{\txtd t}\frac{1}{\widetilde{y}^{2}(t)} - \lambda_{1} 
\frac{1}{\widetilde{y}(t)} \leq -1.
\end{equation*}
Setting $w:=\frac{-1}{\widetilde{y}}$, we obtain for $t\geq t_{1}$ 
that $w(t)>0$ and $$\frac{\txtd w(t)}{\txtd t} + \lambda_{1} w(t)\leq -1.$$ 
It is now elementary to check that $w(t)\to 0$ as $t\nearrow T^{*}$ for 
a finite time $T^*$. This implies $\widetilde{y}(t)\to -\infty$ as $t\nearrow T^{*}-$
proving the claim.\qed
\end{proof}

To conclude, regarding Example~\ref{blowup} one can now easily construct a
cross-diffusion quasilinear SPDE, which becomes ill-posed. Indeed, we just have 
 in a third component an equation degenerating into the backward heat-equation.

\begin{example}
		\begin{equation}
		\label{degenerate}
	\begin{cases}
	~\txtd u= (\Delta u + \frac{1}{2} \Delta v + u (2\lambda_{1} -u)) 
	~\txtd t + \sigma(u)~\txtd W(t), & t>0, ~ x\in G,\\
	~\txtd v=( \Delta v + (\lambda_{1} + k ) v)~\txtd t, & t>0, ~ x\in G,\\
	~\txtd w = u \Delta w ~\txtd t, & t>0, ~ x\in G,\\
	u|_{\partial G} = v|_{\partial G} = 0=w|_{\partial G}=0, &  t>0,\\
	u(x,0)=u_{0}(x) \geq 0, \mbox{ } v(x,0)=\phi(x), & x\in G.
	\end{cases}
	\end{equation}
\end{example}

\appendix

\section{An integration by parts formula}
\label{a}

We shortly indicate the computation for the pathwise mild solutution 
of the linear SPDE
\begin{equation}\label{lin}
	 \txtd U(t)=A(t)U(t)~\txtd t + G(t)~\txtd W(t),
\end{equation}
obtained using an integration by parts formula. 
 
	The strong solution of (\ref{lin})
	\begin{equation}
	U(t)=\int\limits_{0}^{t}A(s) U(s) ~\txtd s + \int\limits_{0}^{t} G(s)~\txtd W(s)
	\end{equation}
	can be written as \begin{equation}\label{u}
	U(t)= S(t,0)\int\limits_{0}^{t} G(s)~\txtd W(s) - \int\limits_{0}^{t} 
	S(t,s)A(s)\int\limits_{s}^{t} G(r) ~\txtd W(r) ~\txtd s,
	\end{equation}	
	where $S(\cdot,\cdot)$ is the evolution operator generated by $A(\cdot)$. 
	If $A(\cdot)$ is bounded, a straightforward computation 
	\cite[Section 4.2, p. 18]{PronkVeraar} immediately proves the 
	claim. Namely, from (\ref{u}) we have
	\begin{align*}
	U(t)& =S(t,0)\int\limits_{0}^{t}G(s) ~\txtd W(s) - \int\limits_{0}^{t}S(t,s)A(s)\int\limits_{0}^{t}G(r) ~\txtd W(r) ~\txtd s \\
	& + \int\limits_{0}^{t} S(t,s)A(s)\int\limits_{0}^{s} G (r) ~\txtd W(r) ~\txtd s.
	\end{align*}
	Since $$\frac{\partial}{\partial s} S(t,s)=-S(t,s)A(s) $$
	we have $$\int\limits_{0}^{t}S(t,s)A(s)~\txtd s=-S(t,t)+S(t,0),$$
	so 
	\begin{align*}
	U(t)& =S(t,0)\int\limits_{0}^{t} G(s)~\txtd W(s) + S(t,t)\int\limits_{0}^{t}G(s)~\txtd W(s)- S(t,0)\int\limits_{0}^{t} G(s) ~\txtd W(s)\\
	& + \int\limits_{0}^{t} S(t,s)A(s)\int\limits_{0}^{s}G(r)~\txtd W(r) ~\txtd s\\
	& = \int\limits_{0}^{t} G(s)~\txtd W(s) +  \int\limits_{0}^{t} S(t,s)A(s)\int\limits_{0}^{s}G(r) ~\txtd W(r) ~\txtd s.
	\end{align*} 
	Furthermore, using Fubini's theorem
	\begin{align*}
	\int\limits_{0}^{t} A(r) U(r)dr & =\int\limits_{0}^{t} A(r) \int\limits_{0}^{r} G(s) ~\txtd W(s) ~\txtd r + \int\limits_{0}^{t}\int\limits_{0}^{r} A(r)S(t,s)A(s) \int\limits_{0}^{s} G(\tau) ~\txtd W(\tau) ~\txtd s  ~\txtd r\\
	& = \int\limits_{0}^{t} A(r) \int\limits_{0}^{r} G(s) ~\txtd W(s) ~\txtd r + \int\limits_{0}^{t} \int\limits_{s}^{t} A(r) S(r,s)A(s)\int\limits_{0}^{s} G(\tau)~\txtd W(\tau) ~\txtd r ~\txtd s\\
	& = \int\limits_{0}^{t} A(s) \int\limits_{0}^{s} G(r)~\txtd W(r)~\txtd s + \int\limits_{0}^{t}S(t,s)A(s)\int\limits_{0}^{s}G(r)~\txtd W(r) ~\txtd s\\ &-\int\limits_{0}^{t}A(s)\int\limits_{0}^{s}G(r)~\txtd W(r)~\txtd s\\
	&=\int\limits_{0}^{t}S(t,s) A(s) \int\limits_{0}^{s} G(r)~\txtd W(r) ~\txtd s.
	\end{align*}
	For the last part we used that $$\frac{\partial}{\partial t} S(t,s)=A(t)S(t,s), $$
	so $$\int\limits_{s}^{t}A(r)S(r,s)~\txtd r=S(t,s)-S(s,s). $$
	If $A(\cdot)$ is unbounded, one can repeat the previous computation under suitable assumptions (as in Section \ref{qspde}) which ensure the existence of the integrals above.



\end{document}